\newcommand{\floor}[1] {\left\lfloor #1 \right\rfloor}
\theoremstyle{plain} \numberwithin{equation}{section}
\newtheorem{theorem}{Theorem}[section]
\newtheorem{corollary}[theorem]{Corollary}
\newtheorem{lemma}[theorem]{Lemma}
\newtheorem{proposition}[theorem]{Proposition}
\theoremstyle{definition}
\newtheorem{remark}[theorem]{Remark}
\newcommand{\x}{\times}
\newcommand{\s}{$\mathfrak{s}$}
\def \f-{f^{-1}}
\def \fp-{f^{-1}_\partial}
\def\Z{\mathbb{Z}}
\def\Q{\mathbb{Q}}
\def\F{\mathbb{F}}
\def\II {\mathcal{I}}
\def\JJ {\mathcal{J}}
\def\l {\ell}
\def\x {\mathbf{x}}
\DeclareMathOperator{\gr}{gr}
\def\CF {\operatorname{CF}}
\def\HF {\operatorname{HF}}
\def\CFKa {\widehat{\operatorname{CFK}}}
\def\HFKa {\widehat{\operatorname{HFK}}}
\def\CFp {\operatorname{CF}^+}
\def\CFm {\operatorname{CF}^-}
\def\CFi {\operatorname{CF}^\infty}
\def\CFa {\operatorname{\widehat{CF}}}
\def\HFa {\operatorname{\widehat{HF}}}
\def\CFK {\operatorname{CFK}}
\def\CFKm {\operatorname{CFK}^-}
\def\HFKm {\operatorname{HFK}^-}
\def\CFKi {\operatorname{CFK}^\infty}
\DeclareMathOperator{\Spin}{Spin}
\DeclareMathOperator{\tb}{tb}
\DeclareMathOperator{\rot}{rot}
\def\spincs {\mathfrak{s}}
\def\X {\mathcal{X}}
\def\Xa {\widehat{\X}}
\def\d {\partial}
\newif\ifrevision
\newcommand{\edit}[1]{%
\ifrevision
\textcolor{blue}{#1}%
\else
#1%
\fi
}
\title{Negative contact surgery on Legendrian non-simple knots}
\author{Shunyu Wan}
\address{Department of Mathematics, Georgia     Institute of Technology, Atlanta, GA, USA}
\email{swan48@gatech.edu}
\author{Hugo Zhou}
\address{Department of Mathematics, University of Michigan, Ann Arbor, MI, USA}
\email{hugozhou@umich.edu}
\begin{document}
\revisionfalse
\maketitle
\begin{abstract}
    We prove that for any pair of Legendrian representatives of the Chekanov-Eliashberg twist knots with different LOSS invariants, any negative rational contact $r$-surgery with $r\neq -1$ always gives rise to different contact 3-manifolds distinguished by their contact invariants.
This gives the first examples of pairs of Legendrian knots with the same classical invariants but distinct contact $r$-surgeries for all negative rational number $r$. 
  We also generalize the statement from the twist knots to a certain families of \edit{two-bridge} knots. 
\end{abstract}
\section{Introduction}
In \cite{EtnyreOncontactsurgery} Etnyre first asked the question \edit{of} whether Legendrian surgery, i.e. contact $(-1)$-surgery, on distinct Legendrian knots in the standard tight
contact structure on $S^3$ always produces distinct contact manifolds, and especially whether this is the case for the Chekanov-Eliashberg twist knots $E_n$ (Figure \ref{subfig:En}). Later using linearized contact homology Bourgeois-Ekholm-Eliashberg showed that Legendrian surgery on max-$\tb$, non Legendrian isotopic representatives of the twist knots $E_n$ \edit{gives} different contact 3-manifolds  \cite{BourgeoisEkholmEliashbergEffectofLegendriansurgery}. However, it is not known whether Legendrian surgery on the stabilized Legendrian twist knots (or equivalently contact negative integer surgery on non-stabilized twist knots) gives different contact 3-manifolds or not. The Bourgeois-Ekholm-Eliashberg argument does not directly apply in those cases, since the Legendrian DGA vanishes for stabilized Legendrians \cite{ChekanovDifferentialAlgebraOfLegendrianLinks}.  

On the other hand, one can consider invariants from Heegaard Floer theory, namely the contact invariant and the LOSS invariant, for contact 3-manifolds and Legendrian knots respectively. Since the LOSS invariant is unchanged under negative stabilization of the Legendrian knot \cite{LOSS}, it makes the calculation of Legendrian surgery on \edit{negatively stabilized} knot possible. 

In \cite{OzsvathStipsiczContactsurgeryandtransverseinvariant} Ozsv\'ath and Stipsicz showed that for the Eliashberg–Chekanov twist knot $E_n$ with $n>3$ and odd, there are $\lceil \frac{n}{4} \rceil$ different Legendrian representatives of $E_n$ in $(S^3, \xi_{std})$ with \edit{Thurston--Bennequin number $1$ (which is maximal)} and rotation number $0$ that have different LOSS invariants. Moreover the classification of Legendrian and transverse \edit{twist} knots by Etnyre-Ng-V\'{e}rtesi \cite{EtnyreNgVertesiLegendrianandtransversetwistknots} implies that the ones Ozsv\'ath and Stipsicz found are all the Legendrian representatives of $E_n$ that could have different LOSS invariant. Those twist knots will be the key objects for this paper. 
\begin{figure}[htb!]
\centering
      \begin{minipage}{0.5\linewidth}
\centering
\subfloat[]{
\begin{tikzpicture}
    \node at (-5,0){\includegraphics[scale=0.35]{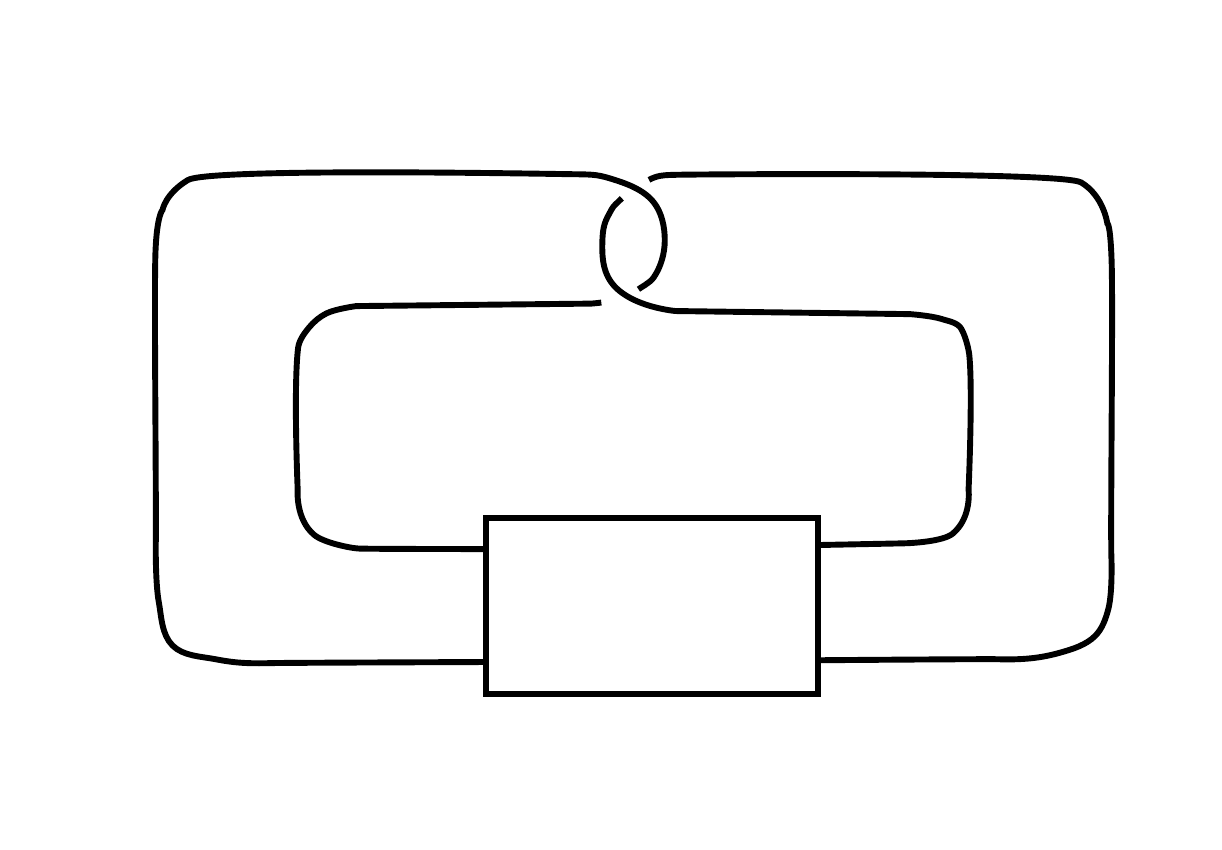}};
      \node at (-4.8,-1.1){ $-n-1$};
      \end{tikzpicture}
      \label{subfig:En}
}
\end{minipage}%
\begin{minipage}{0.5\linewidth}
\centering
\subfloat[]{
\begin{tikzpicture}
        \node at (1.2,-1.1){ $-n$};
           \node at (-0.9,-1.1){ $m$};
       \node at (0,0){\includegraphics[scale=0.35]{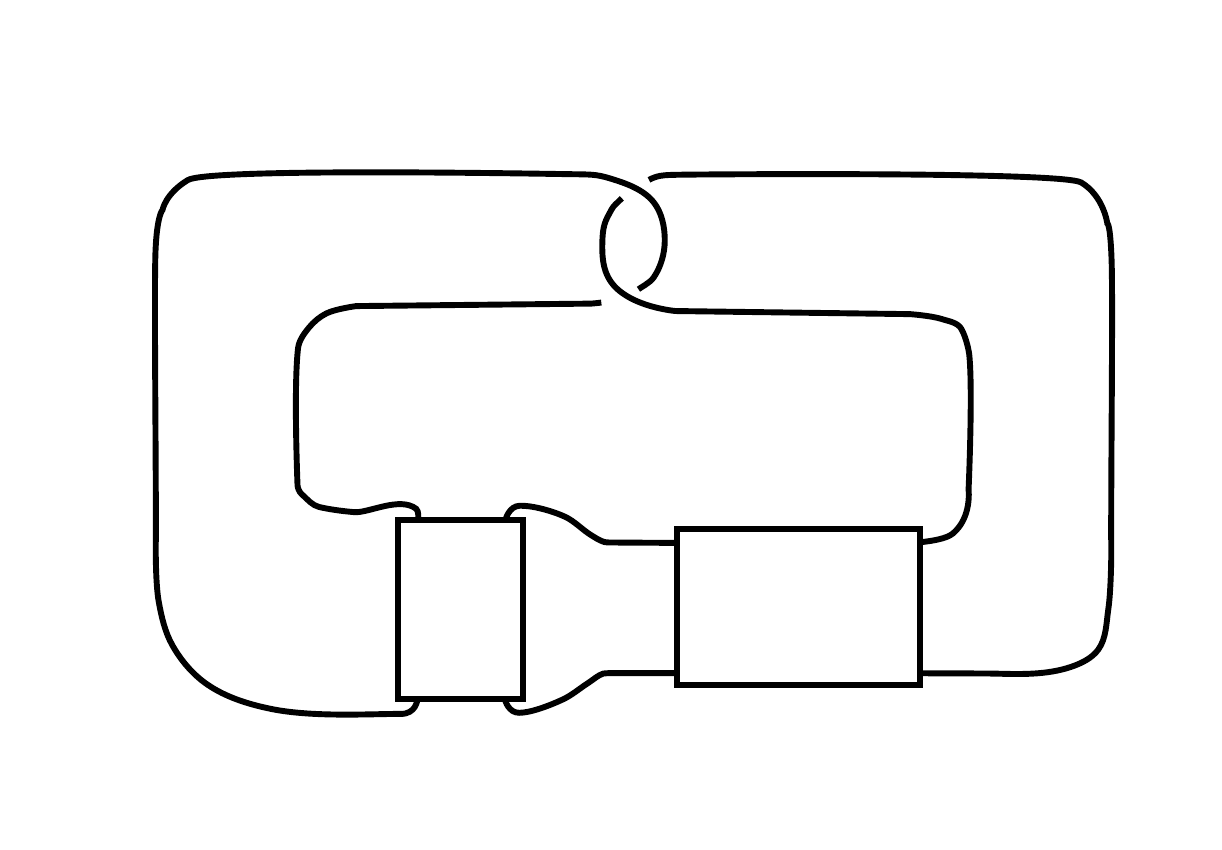}};
\end{tikzpicture}
      \label{subfig:Emn}
}
\end{minipage}%
    \caption{The numbers in boxes indicate half-twists (positive for the right-handed twists and negative for the left-handed twists). The diagram on the left is the Eliashberg–Chekanov twist knot $E_n$. The diagram on the right depicts the knot $E(m,n)$, where taking $m=1$ recovers $E_n$.   }
    \label{fig:E}
\end{figure} 

\edit{Recall that in order to define a contact structure on the contact $r$-surgery one needs to make a choice of stabilizations.}
Throughout the paper we only consider \edit{the} contact structure that corresponds to stabilizations being all negative (see Section \ref{general contact surgery} for more details). Under the above convention we show the following.  
\begin{theorem} \label{negative rational contact surgery on twist knots}
    Let $L_1$ and $L_2$ be two Legendrian representatives of $E_n$ in $(S^3, \xi_{std})$ with $n>3$ odd that have same $\tb=1$ and $\rot=0$ but different LOSS invariants. 
Then for any negative rational number $r \neq -1$,  \edit{contact $r$-surgery} on $L_1$ and $L_2$ \edit{results} in contact manifolds with different Heegaard Floer contact invariants. 
\end{theorem}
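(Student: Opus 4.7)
The plan is to reduce the theorem to a Heegaard Floer computation via a surgery formula that expresses the contact invariant of contact $r$-surgery in terms of the LOSS invariant of the Legendrian knot.

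First, I would apply the Ding-Geiges-Stipsicz algorithm to rewrite contact $r$-surgery on $L_i$ (with the all-negative stabilization convention) as Legendrian surgery on an augmented Legendrian link $\mathbb{L}_i = L_i^{st} \cup \Lambda$ in $(S^3,\xi_{std})$, where $L_i^{st}$ is obtained from $L_i$ by the prescribed number of negative stabilizations and $\Lambda$ is an auxiliary collection of stabilized Legendrian push-offs whose combinatorics are dictated by a continued fraction expansion of $r$. Crucially, because $L_1$ and $L_2$ share smooth knot type and classical invariants, the auxiliary link $\Lambda$ can be chosen identically for both, so that $\mathbb{L}_1$ and $\mathbb{L}_2$ differ only in swapping $L_1$ for $L_2$.

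Next, I would invoke a surgery-formula computation of the contact invariant. Contact $(-1)$-surgery on $\mathbb{L}_i$ realizes $(S^3_r(E_n), \xi_{L_i, r})$ via a Stein cobordism $W$ from $(S^3, \xi_{std})$, and the contact invariant of the surgered manifold is the image under the induced cobordism map of a canonical class built from the LOSS-type invariant of $\mathbb{L}_i$. Using the mapping-cone presentation of the rational surgery formula, together with the fact that negative stabilization preserves the LOSS invariant and that the $\Lambda$-contribution is identical on both sides, this reduces to studying an explicit chain map
\[
F_r \co \HFKa(-S^3, E_n) \longrightarrow \HFa(-S^3_r(E_n))
\]
whose form depends on $r$ but not on the Legendrian representative, and which satisfies $c(\xi_{L_i, r}) = F_r(\mathcal{L}(L_i))$.

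The final step is to show $F_r(\mathcal{L}(L_1)) \neq F_r(\mathcal{L}(L_2))$ for every negative rational $r \neq -1$. I would use the explicit description of $\HFKa(m(E_n))$ and the Alexander/Maslov positions of the LOSS classes determined by Ozsv\'ath-Stipsicz, and analyze how $F_r$ interacts with the bigrading to see that the two classes cannot be identified or both annihilated. The main obstacle is making the distinction persist \emph{uniformly} in $r$: the map $F_r$ depends sensitively on the continued fraction data of $r$, so one must systematically rule out accidental identifications or cancellations case by case (integer $r<-1$, half-integers, and general $r\in(-1,0)$ each requiring their own bookkeeping). The exclusion $r=-1$ reflects the boundary case where $F_r$ degenerates to the simplest surgery map and the corresponding question for Legendrian (i.e.\ contact $(-1)$-) surgery remains open.
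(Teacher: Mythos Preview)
The central gap is the direction of the cobordism map. When you write that ``the contact invariant of the surgered manifold is the image under the induced cobordism map of a canonical class built from the LOSS-type invariant of $\mathbb{L}_i$'', this is backwards. For a Stein cobordism $W\co (S^3,\xi_{std})\to (S^3_r(E_n),\xi_{L_i,r})$ coming from Legendrian surgery, naturality of the contact class says that $F_{-W}\co \HFa(-S^3_r(E_n))\to\HFa(-S^3)$ sends $c(\xi_{L_i,r})$ to $c(\xi_{std})$, not the other way around. There is no known formula of the shape $c(\xi_{L_i,r})=F_r(\mathfrak{L}(L_i))$ for \emph{negative} contact surgery; the Mark--Tosun mapping-cone description you appear to be invoking (Proposition~\ref{mapping cone for contact invariant} here) applies only to \emph{positive} contact surgeries. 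So the map $F_r$ you build your argument around does not exist as stated, and the ``case-by-case injectivity in the bigrading'' step has no object to act on.

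The paper circumvents this obstacle by an indirect route that is genuinely different from your outline. For the base case $r=-2$ it does not map the LOSS class forward; instead it tracks the LOSS invariant of a Legendrian \emph{push-off} $P_i'$ inside the surgered manifold (this direction can be accessed because the $(-1)$-surgery cancels against a $(+1)$-surgery on a parallel copy), and then performs a filtered mapping cone computation of $\CFKi(S^3_{+1}(-E_n),\mu)$ for the dual knot $\mu$ to show that the map $G\co\HFKm\to\HFa$ is injective in the top Alexander grading. For $r=-2-k$ it does not attempt any uniform-in-$r$ argument; rather it realizes $(-2-k)$-surgery as $(-2)$-surgery followed by a \emph{positive} contact $(\tfrac{k+1}{k})$-surgery on a standard Legendrian meridian, where the Mark--Tosun map does go in the right direction, and shows that specific map is injective by a one-line truncation of the mapping cone. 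General $r$ then follows from the DGS algorithm together with the fact that Legendrian surgery preserves distinctness of contact invariants, with one extra positive-surgery trick needed for $r=-1/\ell$. Finally, your remark on $r=-1$ is slightly off: the contact manifolds \emph{are} known to be distinct (Bourgeois--Ekholm--Eliashberg), but by Lisca--Stipsicz their contact invariants coincide, so the exclusion reflects a genuine failure of the Heegaard Floer invariant, not an open problem.
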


The main tools used in proving 
Theorem \ref{negative rational contact surgery on twist knots} are the naturality of contact/LOSS invariant and explicit mapping cone calculations. We first prove the case when $r=-2$ by showing that the LOSS invariant of the induced Legendrian knots of $L_1$ and $L_2$ are different, then we show the map from the LOSS invariant to the contact invariant is injective. The general case is proved by using the result when $r=-2$ and  \edit{an algorithm by Ding-Geiges-Stipsicz \cite{DGS} (which we will  refer to as the DGS algorithm henceforth).}


It is interesting to point out that when $r=-1$, Lisca and Stipsicz \cite{LiscaStipsiczNotesoncontactinvariants} showed that for any max $\tb$ representative of $E_n,$ regardless of the LOSS invariant, the contact $(-1)$-surgery always results in manifolds with the same contact invariant. \edit{On the other hand}, Bourgeois-Ekholm-Eliashberg showed that Legendrian \edit{surgery} on  the max $\tb$ representatives of $E_n$ \edit{yields} different contact 3-manifolds \cite{BourgeoisEkholmEliashbergEffectofLegendriansurgery}. Combining with our result, we have the following.

\begin{corollary}
There exist arbitrarily large (finite) families of Legendrian knots in $(S^3,\xi_{std})$ such that knots in each family have the same smooth knot type, $\tb$, and $\rot$, but different contact $r$-surgery (up to contact isotopy) for any given rational number $r<0$. 
\end{corollary}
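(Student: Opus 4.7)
The plan is to combine three ingredients already assembled in the introduction: Ozsv\'ath-Stipsicz's construction of many LOSS-distinct Legendrian representatives of $E_n$; Theorem \ref{negative rational contact surgery on twist knots}, which handles rational $r < 0$ with $r \neq -1$; and the Bourgeois-Ekholm-Eliashberg calculation, which handles $r = -1$.

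First, given any positive integer $N$, I would pick an odd integer $n > 3$ large enough that $\lceil n/4 \rceil \geq N$. By the Ozsv\'ath-Stipsicz count, one then has $N$ Legendrian representatives $L_1, \dots, L_N$ of $E_n$ in $(S^3, \xi_{std})$, all with $\tb = 1$ and $\rot = 0$, whose LOSS invariants are pairwise distinct. In particular the $L_i$ are pairwise non-Legendrian-isotopic max-$\tb$ knots that share the same smooth type, Thurston--Bennequin number, and rotation number, so this family is the candidate.

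Now fix any rational $r < 0$. If $r \neq -1$, I apply Theorem \ref{negative rational contact surgery on twist knots} to each pair $(L_i, L_j)$ with $i \neq j$: since the LOSS invariants differ, the two resulting contact $r$-surgeries have different Heegaard Floer contact invariants, and are therefore not contactomorphic and a fortiori not contact isotopic. If instead $r = -1$, then because the $L_i$ are pairwise non-Legendrian-isotopic max-$\tb$ representatives of $E_n$, the Bourgeois-Ekholm-Eliashberg theorem yields pairwise non-contactomorphic contact $(-1)$-surgeries, this time detected by linearized contact homology rather than by the contact invariant (which, as noted via Lisca-Stipsicz, cannot see the difference when $r = -1$).

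The main obstacle here is logistic rather than mathematical: the corollary is a clean assembly of Theorem \ref{negative rational contact surgery on twist knots} with the BEE calculation, and Ozsv\'ath-Stipsicz supplies the arbitrary largeness. The only point worth double-checking is that the LOSS-distinct representatives really are pairwise non-Legendrian-isotopic max-$\tb$ knots, which is immediate from their common $\tb = 1$ and the Legendrian-isotopy invariance of the LOSS class.
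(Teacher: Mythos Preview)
Your proposal is correct and matches the paper's intended argument: the corollary is stated in the paper without proof, immediately after assembling exactly the three ingredients you use (Ozsv\'ath--Stipsicz for the arbitrarily large families, Theorem~\ref{negative rational contact surgery on twist knots} for $r\neq -1$, and Bourgeois--Ekholm--Eliashberg for $r=-1$). There is nothing to add.
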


When we take the $4$-dimensional perspective  of Legendrian surgery, i.e. as the attachment of Stein handles, then the above corollary immediately implies the following. 

\begin{corollary}
    For any integer $N\geq 2$, there exist infinitely many examples of smooth 4-manifolds $X$  \edit{each admitting} $N$ inequivalent Stein structures, all inducing isomorphic $\Spin^c$ structures on $X$. The Stein structures are distinguished by the contact invariants on the contact boundary.
\end{corollary}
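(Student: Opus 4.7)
The plan is to deduce the corollary from the previous one by picking $r=-2$, for which contact $r$-surgery on a Legendrian knot is the attachment of a single Stein $2$-handle, and then verifying that the resulting smooth $4$-manifold and its induced $\Spin^c$ structure are independent of the Legendrian representative while the boundary contact structure is not. Concretely, under the all-negative stabilization convention used in the paper, contact $(-2)$-surgery on a Legendrian knot $L$ with $\tb(L)=1$ and $\rot(L)=0$ is realized as Legendrian surgery (i.e., contact $(-1)$-surgery) on the once-negatively-stabilized knot $S_-(L)$, which has $\tb=0$ and $\rot=-1$. Given $N\geq 2$, I would choose an odd integer $n>3$ with $\lceil n/4\rceil \geq N$ and let $L_1,\dots,L_N$ be $N$ Legendrian representatives of $E_n$ with $\tb=1$, $\rot=0$ and pairwise distinct LOSS invariants, which exist by Ozsv\'ath--Stipsicz. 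Legendrian surgery on each $S_-(L_i)$ attaches a Stein $2$-handle to $D^4$, producing a compact Stein domain $(X, J_i)$ whose contact boundary is the contact $(-2)$-surgery on $L_i$.

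The smooth manifold $X$ and the $\Spin^c$ structure $\mathfrak{s}_{J_i}$ are independent of $i$: smoothly, $S_-(L_i)$ represents the knot $E_n$ with framing $\tb(S_-(L_i))-1 = -1$, so $X$ is the trace of the $(-1)$-framed $2$-handle on the smooth knot $E_n$; and $\mathfrak{s}_{J_i}$ is pinned down by $\langle c_1(X, J_i),[D]\rangle = \rot(S_-(L_i)) = -1$. By Theorem \ref{negative rational contact surgery on twist knots}, the contact boundaries $(\partial X, \xi_{J_i})$ have pairwise distinct Heegaard Floer contact invariants and hence are pairwise non-contactomorphic; since any diffeomorphism of $X$ carrying one Stein structure to another would restrict to a contactomorphism of the contact boundaries, the $J_i$ are pairwise inequivalent.

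To obtain infinitely many examples one varies $n$ over odd integers with $n\geq 4N-3$: the boundary of $X$ is $S^3_{-1}(E_n)$, and these $3$-manifolds are non-homeomorphic for infinitely many $n$ (they can be separated using $\widehat{\mathrm{HF}}$ or any classical invariant that depends nontrivially on $n$), so the underlying $4$-manifolds $X$ are pairwise non-diffeomorphic as $n$ varies.

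The main delicate point I foresee is the identification of contact $(-2)$-surgery on $L_i$ with Legendrian surgery on $S_-(L_i)$ under the paper's stabilization convention, which is the $r=-2$ case of the DGS algorithm; once that is in hand, the invariance of the smooth $4$-manifold and the $\Spin^c$ structure across $i$, the passage from distinct contact invariants to inequivalence of Stein structures, and the non-diffeomorphism of $X$ across $n$ are all routine bookkeeping.
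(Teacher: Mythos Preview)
Your argument is correct, but it takes a different route from the paper's. The paper (in the remark following the corollary) fixes the knot $E_{4N-3}$ (or $E_{4N-1}$) and varies the smooth framing over all negative integers; you instead fix the framing at $-1$ (i.e., contact $(-2)$-surgery) and vary $n$ over odd integers $n\geq 4N-3$. Both produce infinitely many $4$-manifolds each carrying $N$ inequivalent Stein structures with the same $\Spin^c$ structure, and both rely on the same core input (Theorem~\ref{negative rational contact surgery on twist knots} together with Gompf's rotation-number formula for $c_1$).

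The trade-off is in the last step, distinguishing the underlying smooth $4$-manifolds. In the paper's scheme the boundaries are $S^3_{k}(E_n)$ for $k=-1,-2,-3,\dots$, which are separated immediately by $H_1$. In your scheme the boundaries $S^3_{-1}(E_n)$ are all integer homology spheres, so you must invoke something finer; your appeal to $\HFa$ is legitimate (indeed Proposition~\ref{prop:dualEn} shows the rank of $\HFa(S^3_{-1}(E_n))$ grows with $n$), but this is extra work compared with the paper's approach. If you want to keep your framing-fixed argument, it would be cleaner to state explicitly which invariant you are using and why it separates the $S^3_{-1}(E_n)$, rather than leaving it as ``any classical invariant that depends nontrivially on $n$''.
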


\begin{remark}
    For $N\geq 2$, the 4-manifolds are obtained by attaching two handles along $E_{4N-3}$ (or $E_{4N-1}$) with any smooth framing less than $0$. In \cite{KarakurtObaUkidaPlanarLFandStein} Karakurt-Oba-Ukida \edit{give} infinitely many examples of contractible manifolds each admitting $N=2$ inequivalent Stein structures with the same $\Spin^c$ structure, but do not address the case $N\geq 3$. On the other hand the results from Bourgeois-Ekholm-Eliashberg \cite{BourgeoisEkholmEliashbergEffectofLegendriansurgery} yield examples for all $N\geq 2$, but \edit{no} infinite families (their examples cannot be distinguished by the contact invariants).
\end{remark}

It is also worth mentioning that for positive integer contact surgery the resulting contact invariant is determined by the classical invariants of the Legendrian knots and the underlying contact manifolds \cite[Corollary 1.6]{WanNaturalityofLOSSinvariant} (see also \cite{MarcoOSinvariantsofcontactsurgeries}), and it is natural to ask whether the same is true for negative surgery. Theorem \ref{negative rational contact surgery on twist knots} gives negative answer to this question. 

\begin{corollary}
  The contact invariant of negative contact surgery is not necessarily determined by the classical invariants of the Legendrian knot.  
\end{corollary}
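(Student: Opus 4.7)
The plan is to observe that this corollary is an immediate consequence of Theorem \ref{negative rational contact surgery on twist knots} once we record precisely what the \emph{classical invariants} of a Legendrian knot are: the smooth knot type together with $\tb$ and $\rot$. It therefore suffices to exhibit a single negative rational $r$ and a single pair of Legendrian knots with matching classical invariants whose contact $r$-surgeries have distinct contact invariants.

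Concretely, I would fix $n>3$ odd (for instance $n=5$) and invoke the result of Ozsv\'ath--Stipsicz recalled in the introduction to select two Legendrian representatives $L_1,L_2$ of $E_n$ in $(S^3,\xi_{std})$ with $\tb=1$, $\rot=0$, and distinct LOSS invariants. By construction, $L_1$ and $L_2$ share the same smooth knot type, Thurston--Bennequin number, and rotation number, so they have identical classical invariants.

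Now fix any negative rational $r\neq -1$, for example $r=-2$. By Theorem \ref{negative rational contact surgery on twist knots}, the contact $r$-surgeries on $L_1$ and on $L_2$ produce contact $3$-manifolds whose Heegaard Floer contact invariants differ, and in particular these contact structures are not isotopic. Thus the contact invariant of the contact $r$-surgery depends on more than the classical invariants of the underlying Legendrian knot, which is the claim of the corollary. There is no real obstacle here beyond invoking the theorem correctly; the only thing to be careful about is making explicit that $\tb$, $\rot$, and the smooth knot type really are the \emph{classical} invariants under discussion, so that the distinction between $L_1$ and $L_2$ is genuinely non-classical (witnessed precisely by the LOSS invariant).
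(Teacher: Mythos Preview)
Your proposal is correct and matches the paper's treatment: the corollary is stated as an immediate consequence of Theorem~\ref{negative rational contact surgery on twist knots}, with no separate proof given, since the $L_1,L_2$ there already share smooth knot type, $\tb$, and $\rot$. Your choice to make the example explicit (e.g.\ $n=5$, $r=-2$) is fine and changes nothing.
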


Using the naturality of LOSS invariant and the Legendrian representatives of $E_n$ the first author found families of \edit{two-bridge} knots with the same $\tb$ and $\rot$ but different LOSS invariants:
\begin{theorem}[{\cite[Theorem 5.2]{WanNaturalityofLOSSinvariant}}] \label{Legendrian two bridge knots} 
    Let $m,n$ be positive odd integers with $n>3$. The knot  $E(m,n)$ (Figure \ref{subfig:Emn}) has at least $\lceil \frac{n}{4} \rceil$ Legendrian representatives that have  $\tb=m$ and $\rot=0$ that have different LOSS invariants.
\end{theorem}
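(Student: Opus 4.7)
The plan is to leverage the Ozsv\'ath--Stipsicz Legendrian representatives of $E_n$ together with the naturality of the LOSS invariant established in \cite{WanNaturalityofLOSSinvariant} to lift the $\lceil n/4 \rceil$ distinct LOSS classes from $E_n$ to $E(m,n)$. Concretely, I would fix representatives $L_1,\dots,L_k$ of $E_n$ with $\tb=1$, $\rot=0$, and pairwise distinct LOSS invariants, and construct Legendrian representatives $\widetilde L_1,\dots,\widetilde L_k$ of $E(m,n)$ with $\tb=m$, $\rot=0$ so that a naturality map sends the LOSS class of $L_i$ to that of $\widetilde L_i$.

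Topologically, $E(m,n)$ differs from $E_n = E(1,n)$ by inserting $m-1$ extra positive half-twists, i.e. $(m-1)/2$ positive full twists (an integer since $m$ is odd), in the $m$-box region. At the Legendrian level I would realize this by placing two parallel Legendrian strands through the $m$-box and braiding them as a standard positive Legendrian two-braid with $(m-1)/2$ full twists, inserting zig-zags as needed so that the front-projection Thurston--Bennequin and rotation computations yield $\tb(\widetilde L_i)=m$ and $\rot(\widetilde L_i)=0$. This gives the desired Legendrian representatives, and it remains to distinguish their LOSS invariants.

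To compare LOSS invariants across the transition $L_i \rightsquigarrow \widetilde L_i$, I would interpret the added twists contact-geometrically: each positive full twist can be produced by a pair of contact surgeries on a small Legendrian unknot meridional to the $m$-box region, chosen so that the ambient manifold remains $(S^3,\xi_{std})$. This gives a sequence of elementary contact operations whose effect on $\HFKa$ is controlled by the naturality maps of \cite{WanNaturalityofLOSSinvariant}. The composite map $\Phi\colon \HFKa(S^3,-E_n) \to \HFKa(S^3,-E(m,n))$ sends the LOSS class of $L_i$ to that of $\widetilde L_i$, so it suffices to show that $\Phi$ is injective on the $\mathbb{F}$-span of the LOSS classes of $L_1,\dots,L_k$.

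The main obstacle is precisely this injectivity check. I would approach it via the mapping cone / bent complex description of knot Floer homology: both $E_n$ and $E(m,n)$ are two-bridge knots and hence thin, so their LOSS classes are detected by explicit bigraded generators that I can track through the local surgery picture. The naturality map acts faithfully on these generators because the inserted surgeries modify the complex only in a region disjoint from where the distinguishing generators live, so the Ozsv\'ath--Stipsicz distinctions survive. If a direct combinatorial verification becomes cumbersome, an alternative is to argue that the relevant cobordism piece induces an isomorphism in the top Alexander grading by a dimension count together with grading constraints. Either route yields that the $\lceil n/4 \rceil$ distinct LOSS classes of $E_n$ transport to $\lceil n/4 \rceil$ distinct LOSS classes of $E(m,n)$, proving the theorem.
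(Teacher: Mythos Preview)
Your overall strategy---start from the Ozsv\'ath--Stipsicz representatives of $E_n$ and transport the distinction to $E(m,n)$ via naturality of the LOSS invariant---is exactly the right idea, and it matches the approach attributed to \cite{WanNaturalityofLOSSinvariant} (see the opening of Section~\ref{sec: Emn} and Figure~\ref{fig:obtainEmn}(a)). However, you have the naturality map running in the wrong direction, and this creates the very injectivity problem you then struggle with.

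The argument in the paper goes the other way. One builds the $\widetilde L_i$ by inserting $(m-1)/2$ positive full twists into the front of $L_i$, and then observes that each such twist is \emph{undone} by contact $(+2)$-surgery on a standard Legendrian meridian (the mechanism of Lemma~\ref{n+1/n surgery and n negative
stabilization} with $n=1$). The naturality of the LOSS invariant under positive contact surgery (Theorem~\ref{Naturality of LOSS}, together with Theorem~\ref{thm: same Spin^c for Legendrian with classical invariants} to pin down the $\Spin^c$ structure) then produces a single map
\[
F\colon \HFKm(-S^3,E(m,n)) \longrightarrow \HFKm(-S^3,E_n)
\]
with $F(\mathfrak{L}(\widetilde L_i))=\mathfrak{L}(L_i^{-(m-1)})=\mathfrak{L}(L_i)$, the last equality by Theorem~\ref{LOSS invariant under negative stabilization}. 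Since the \emph{targets} $\mathfrak{L}(L_1),\dots,\mathfrak{L}(L_k)$ are already pairwise distinct, the \emph{sources} $\mathfrak{L}(\widetilde L_1),\dots,\mathfrak{L}(\widetilde L_k)$ are automatically pairwise distinct. No injectivity statement about $F$ (or any $\Phi$ in your direction) is ever needed.

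By contrast, your proposed map $\Phi$ from $E_n$ to $E(m,n)$ would require you to show that $\Phi$ is injective on the span of the LOSS classes, and your justification (``the inserted surgeries modify the complex only in a region disjoint from where the distinguishing generators live'') is not an argument---the cobordism map mixes filtrations globally, and there is no a priori reason the relevant subcomplex is preserved. A minor point: the LOSS invariant lives in $\HFKm$, not $\HFKa$; this matters since the map $G\colon\HFKm\to\HFa$ is not generally injective.
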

When $m=1$ this gives back  the twist knot $E_n$ and the corresponding Legendrian representatives are the ones from \cite{OzsvathStipsiczContactsurgeryandtransverseinvariant}. We have a theorem analogous to Theorem \ref{negative rational contact surgery on twist knots} for these representatives of $E(m,n)$. 
\begin{theorem} \label{negative rational contact surgery on two bridge knots}
    Let $L_1$ and $L_2$ be two different Legendrian representatives of $E(m,n)$ from Theorem \ref{Legendrian two bridge knots}. Then for any negative rational number $r\neq -m$, contact $r$-surgery on $L_1$ and $L_2$ gives non contact-isotopic manifolds with different contact invariant.
\end{theorem}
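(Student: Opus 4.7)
The plan is to follow the strategy of Theorem \ref{negative rational contact surgery on twist knots} with the critical surgery coefficient shifted from $-1$ to $-m$. This shift is natural because contact $r$-surgery on a Legendrian knot with $\tb = m$ corresponds topologically to surgery with slope $m + r$, so $r = -m$ is the analog of $r = -1$ in the $\tb = 1$ setting (both yield topological $0$-surgery). Consequently the natural base case is the integer value $r = -m-1$, corresponding to topological slope $-1$ and paralleling the role of $r = -2$ in the twist-knot theorem.

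The first step is to establish the base case $r = -m-1$. By the DGS algorithm, contact $(-m-1)$-surgery on $L_i$ decomposes as Legendrian surgery on $L_i$ together with contact $(-1)$-surgeries on $m$ negatively stabilized Legendrian push-offs. By naturality of the LOSS invariant under Legendrian surgery, the distinction between the LOSS invariants of $L_1$ and $L_2$ propagates through each of these steps, with the induced Legendrian knots or their images in the resulting contact invariants carrying different LOSS classes. A mapping-cone calculation patterned on the one used in the proof of Theorem \ref{negative rational contact surgery on twist knots}, combined with the injectivity of the map from the LOSS invariant to the contact invariant at the final stage, then distinguishes the contact invariants of the two base-case surgeries.

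For a general negative rational $r \neq -m$, I would apply the DGS algorithm once more to express contact $r$-surgery as a contact integer surgery on $L_i$ followed by a chain of contact $(-1)$-surgeries on Legendrian push-offs of meridional unknots arising from the continued fraction expansion of $r$. Choosing the integer surgery stage so that the argument from the base case (or a mild variant of it) applies, and using naturality of the contact invariant under the subsequent Stein cobordisms, the distinction established at the base case is preserved, yielding distinct contact invariants for the full contact $r$-surgery.

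The main obstacle is the base-case mapping-cone calculation. The Legendrian representatives of $E(m,n)$ supplied by Theorem \ref{Legendrian two bridge knots} differ from the $E_n$ representatives by an explicit sequence of $m-1$ negative stabilizations, which leaves the LOSS invariants unchanged but alters the surgery cobordism, its $\Spin^c$ decomposition, and the induced maps on Heegaard Floer homology. Tracking these maps carefully through the mapping-cone formula, and confirming that the final injectivity statement survives in the shifted setting, is the technical core of the proof; the rest of the argument is essentially formal once the base case is in place.
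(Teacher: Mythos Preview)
Your proposal has two issues, one factual and one strategic.

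First, the Legendrian representatives of $E(m,n)$ from Theorem~\ref{Legendrian two bridge knots} are \emph{not} obtained from the $E_n$ representatives by $m-1$ negative stabilizations. Stabilization preserves the smooth knot type and lowers $\tb$; here the smooth knot type changes from $E_n$ to $E(m,n)$ and $\tb$ increases from $1$ to $m$. The actual construction adds $m-1$ positive half-twists in the diagram (see Figure~\ref{fig:obtainEmn}). This matters because it is precisely this twist region that the paper's proof exploits.

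Second, and more seriously, the paper does \emph{not} rerun the argument of Theorem~\ref{negative rational contact surgery on twist knots} for $E(m,n)$. Instead it reduces directly to Theorem~\ref{negative rational contact surgery on twist knots}: the extra $(m-1)/2$ full twists can be undone by contact $(+2)$-surgeries on standard Legendrian meridians, converting contact $r$-surgery on $L_i$ into contact $(r-m+1)$-surgery on the underlying $E_n$ representative $L_i'$. Naturality of the contact invariant under these positive surgeries (Theorems~\ref{naturality of contact invariant} and \ref{thm: same Spin^c for Legendrian with classical invariants}) then pulls back the distinction already established for $E_n$. No new mapping-cone computation is needed.

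The paper in fact contains a remark addressing exactly your strategy: following the Section~\ref{sec:proof} outline with $E_n$ replaced by $E(m,n)$ succeeds at the base case (the analogue of Theorem~\ref{-2 contact surgery}) but \emph{fails} at the next step, the analogue of Theorem~\ref{-2-k contact surgery}. The reason is that the truncation argument for the mapping cone \eqref{eq: mappingcone} relies on the dual knot having genus $1$, so that $\hat v_s$ and $\hat h_s$ are isomorphisms for $|s|\geq 1$. Since $g(E(m,n))$ grows with $m$, this truncation no longer collapses the cone to a single $\hat B$, and the required injectivity is not available. Your ``mild variant'' would have to overcome this, and the authors were unable to do so.
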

\subsection*{Organization} In \edit{Sections} \ref{sec:hfpreli} and \ref{sec:contactpreli} we review the preliminaries for the Heegaard Floer homology and contact surgeries. After collecting computational results in Section \ref{sec: compute}, we prove Theorem \ref{negative rational contact surgery on twist knots} in Section \ref{sec:proof}. We prove Theorem \ref{negative rational contact surgery on two bridge knots} in Section \ref{sec: Emn}.
\subsection*{Acknowledgement}
The authors would like to thank John Baldwin, Marc
Kegel, and Tom Mark for useful suggestions about the preprint. 
Hugo Zhou thanks Max Planck Institute for Mathematics for its support. Shunyu Wan was supported in part by grants from the NSF (RTG grant DMS-1839968) and the Simons Foundation (grants 523795 and 961391 to Thomas Mark). We also thank the annonymous referee for many helpful comments and suggestions.




\section{Heegaard Floer Homology Preliminaries} \label{sec:hfpreli}
We provide preliminaries for Heegaard Floer homology in this section. The goal is to introduce the dual knot surgery formula in Section 
\ref{subsec: dualknot}.
\subsection{Heegaard Floer invariants for three-manifolds and knots}
Heegaard Floer homology is defined by Ozsv\'{a}th-Szab\'{o} in \cite{OSht}. To a closed oriented $3$-manifold $Y$ with a fixed basepoint $z,$ they associate a \edit{chain complex} $\CF^\circ (Y)$ with four different flavors $\circ = \wedge,+,-$ and $\infty$, \edit{whose chain homotopy equivalence class is a topological invariant of $(Y,z),$} called the \emph{Heegaard Floer chain complex} (we will in general suppress the basepoint from the notation). The generators for $\CF^\circ (Y)$ are given by the intersections of two Lagrangian tori in the symmetric product of the Heegaard surface and the differentials are given by the count of certain holomorphic disks between generators. The \edit{chain complex}  $\CFa(Y)$ is defined over $\F,$ where $\F = \Z/2\Z$ is the field with two elements; \edit{the chain complexes} $\CFm(Y)$ and $\CFp(Y)$ are defined  over the ring $\F[U]$ and $\CFi(Y)$ is a chain complex over the ring $\F[U,U^{-1}];$ each complex admits a Maslov grading.    By taking the homology of $\CF^\circ (Y)$, we obtain the modules $\HF^\circ(Y)$, called the \emph{Heegaard Floer homology of $Y$}.  

Given a knot $K\subset Y$, the Heegaard Floer complexes admit a refinement to a knot invariant, \edit{defined by Ozsv\'{a}th  and Szab\'{o} in \cite{OSknot} and independently by Rasmussen in \cite{Rfk}}.   For each pair $(Y,K), $  by adding a basepoint $w$  which encodes the knot information,
one  imposes an $(i,j)$ \edit{double filtration} over the original chain complex $\CF^\infty (Y)$. The resulting chain complex is graded and doubly-filtered; denoted by $\CFKi(Y,K),$ this is  called the \emph{full knot Floer chain complex}, since other versions of the knot invariants can be obtained as a sub/quotient complex of it. The complex  $\CFKi(Y,K)$ can be viewed as \edit{generated} by formal elements $x=[\x,i,j],$ where $\x$ denotes an intersection point of the Lagrangian tori, and the filtration level $(i,j)$ can be \edit{viewed} as the \emph{\edit{coordinates}} of $x$ over an $(i,j)$-plane.

There are a couple of other versions of the knot complexes that we will use. Define
\begin{align*}
\CFKa(Y,K) &= \{[\x,i,j]\in \CFKi(Y,K) \mid j= 0\}\\
  \CFKm_s(Y,K) &= \{[\x,i,j]\in \CFKi(Y,K) \mid i\leq 0, j=s\} 
\end{align*}
with the differentials inherited from $\CFKi(Y,K)$.  Let $\HFKm_s(Y,K) $ and $\HFKa(Y,K)$ be the homology of $\CFKa(Y,K)$ and $\CFKm_s(Y,K)$ respectively and define $\HFKm (Y,K)= \oplus_{s\in\Z} \HFKm_s(Y,K).$ The above notations follow  the conventions in \cite{OzsvathStipsiczContactsurgeryandtransverseinvariant}. 

The \edit{set} of $\Spin^c$ structures of a closed $3$-manifold $Y$ admits a non-canonical \edit{bijection to $H_1(Y;Z)$}. The Heegaard Floer chain complexes and knot Floer chain complexes split over  \edit{$\Spin^c(Y)$}. Namely, 
\[
\CF^\circ(Y) = \bigoplus_{\spincs \in \Spin^c(Y)} \CF^\circ(Y,\spincs)  \hspace{3em}  \CFK^\circ(Y,K) = \bigoplus_{\spincs \in \Spin^c(Y)} \CFK^\circ(Y,K,\spincs).
\]
\subsubsection{\edit{Strong} invariance and naturality.} \label{subsubsec:stronginv}
In \cite{OSht}, Ozsv\'{a}th  and Szab\'{o} proved that the Heegaard Floer chain complex $\HF^\circ (Y)$ is an invariant of the three-manifold $Y$ up to graded isomorphism. In \cite{JTZnmh}, it was proved that  $\HF^\circ (Y, z)$ is an invariant of the based three-manifold $(Y,z)$ in the following strong sense. In fact, $\HF^\circ (Y,z)$ is a well-defined group, not just an isomorphism class of \edit{groups: diffeomorphisms induce isomorphisms and furthermore  isotopic diffeomorphisms induce identical maps on $\HF^\circ (Y,z)$.}

Similarly,  $\HFKm(Y,K) $ and $\HFKa(Y,K)$ are  invariants of the pair $(Y,K) $ up to graded isomorphism by \cite{OSknot}. It was proved in  \cite{JTZnmh} that in fact they are invariants of the based pair $(Y,K, z,w) $ in the above strong sense. 

The \edit{strong invariance} is crucial in our arguments. Viewing the Heegaard Floer invariants as  actual groups allows us to distinguish certain \edit{elements} (that might have been indistinguishable up to isomorphism). We adopt this perspective \edit{throughout} the paper, specifically in the proofs in Section \ref{sec:proof} and \ref{sec: Emn}.

\subsection{Rational surgery and mapping cone formula}
For a null-homologous knot $K$ \edit{in a rational homology sphere $Y$}, 
Ozsv\'{a}th  and Szab\'{o} \edit{described}  in \cite{rational} an algorithm that computes $\CFi(Y_{r}(K))$ with $r\in \Q$  using the input of
the knot Floer complex $\CFKi(Y,K)$ \edit{together with the ``flip map"}. We will now describe this algorithm.

Let $p, q$ be a pair of coprime integers such that $ q> 0.$
Given  $C=\CFKi(Y,K)$,  a  finitely generated, graded chain complex over the ring $\F[U,U^{-1}]$ with $(i,j)$ \edit{double filtration}. For $t\in \Z$, let $(t,A_s(C))$ and $(t,B_s(C))$ both denote a copy of $C$ and set $s = \floor{t/q}$.   Define  $v^{\infty}_t : (t, A_s(C)) \rightarrow (t,B_s(C))$ to be the identity map and $h^{\infty}_t : (t, A_s(C)) \rightarrow (t+p,B_{s'}(C))$ with $s' = \floor{(t+p)/q}$ to  be the composition $U^s \circ \textit{flip},$ where $\textit{flip} : C \rightarrow C$ denotes the ``flip map". \edit{The flip map is a skew-filtered (it is filtered with respect to the $j$ filtration in the domain and the $i$ filtration in the range) chain homotopy equivalence induced by a sequence of Heegaard moves that exchange the two basepoints. By \cite[Lemma 2.18]{HeddenLevine} such map is unique up to skew-filtered chain homotopy equivalences (in the above sense) when $Y$ is an L-space. Recall that $\CFKi(Y,K)$ is symmetric with respect to exchanging coordinates $i$ and $j$ \cite[Proposition 3.10]{OSknot}.       Therefore when $Y=S^3$, we can simply pick the flip map  to be the reflection  along the line $i=j$.} For $i\in \Z/p\Z,$ define $(\X^{\infty}_{p/q}(C),i)$ to be the mapping cone of
 \begin{align} \label{eq:x_infinity1_rational}
      \bigoplus^{gq-1}_{\substack{t=-(g-1)q \\ t \equiv i \mod |p|}}(t,A_s(C)) \xrightarrow{v^{\infty}_t+h^{\infty}_t} \bigoplus^{gq-1}_{\substack{t=-(g-1)q -1 + p\\ t \equiv i \mod |p|}}(t,B_s(C)),
\end{align}
where $g=g(K).$ Denote also $\X^{\infty}_{p/q}(C) = \bigoplus_{i \in \Z/p\Z}(\X^{\infty}(C),i)$. 
When  the surgery coefficient is clear,  we suppress $p/q$ from the notations and simply write $\X^{\infty}(C)$. When $C=\CFKi(Y,K), $ we can also write $\X^{\infty}(Y,K)$ to specify the manifold-knot pair. 
\begin{theorem}[\cite{rational}]
     \edit{There is an identification $\Spin^{c}(Y_{p/q}(K)) \cong \Z/p\Z,$ such that for each $i\in \Spin^{c}(Y_{p/q}(K))$,} the complex $\CFi(Y_{p/q}(K),i)$ is chain homotopy equivalent to $(\X^{\infty}_{p/q}(Y,K),i)$.
\end{theorem}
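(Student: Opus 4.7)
The plan is to follow Ozsv\'{a}th--Szab\'{o} \cite{rational}. The central input is the integer surgery mapping cone formula (the case $q=1$), which I would take as known. The goal is to bootstrap from the integer case to a general $p/q$ surgery by expressing $Y_{p/q}(K)$ through integer surgery on an augmented link and then iteratively applying the integer formula.

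First, I would realize $Y_{p/q}(K)$ as integer surgery on a link. Choose a continued fraction expansion encoding $p/q$, and let $L = K \cup O_1 \cup \cdots \cup O_k$ be the link formed by chaining unknots $O_j$ onto $K$ with integer framings whose iterated Kirby calculus slam-dunks collapse to $p/q$-surgery on $K$. Iterating the integer mapping cone formula along these $k$ extra unknots (equivalently, invoking the link surgery formula) yields a large iterated mapping cone whose vertices are tensor products of copies of $\CFKi(Y,K)$ with the knot Floer complexes of the unknots $O_j$. Since $\CFKi$ of an unknot is a single free tower, nearly every vertex contributes an acyclic summand, and successive cancellation collapses the complex to pieces parametrized by a single integer $t \in \Z$.

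Second, I would carry out the algebraic simplification. Reindex the surviving vertices by $t$ with $s = \lfloor t/q \rfloor$; these become the $A_s$'s and $B_s$'s of \eqref{eq:x_infinity1_rational}. The vertical maps $v^{\infty}_t$ descend from identity maps, while the diagonal maps from $(t, A_s(C))$ to $(t+p, B_{s'}(C))$ acquire the factor $U^s$ (recording the $j$-filtration shifts accumulated through the chain of unknots) composed with the flip map (which swaps the roles of the $i$- and $j$-filtrations). The truncation window $-(g-1)q \le t \le gq-1$ reflects that outside it the $A$- and $B$-complexes coincide and the differential is already an isomorphism. The identification $\Spin^c(Y_{p/q}(K)) \cong \Z/p\Z$ comes from evaluating $c_1$ on a generator of $H_2$ of the surgery cobordism: this pairing records the residue $t \bmod p$ and yields the $\Z/p\Z$-grading on the mapping cone.

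The main obstacle will be the cancellation bookkeeping in the iterated link surgery complex; one must verify that the acyclic pieces really do cancel to leave exactly the clean form in \eqref{eq:x_infinity1_rational}, and that the accumulated $U$-powers reassemble into the single factor $U^s$ attached to $h^{\infty}_t$. A secondary subtlety is pinning down the flip map, which is canonical only up to skew-filtered chain homotopy equivalence; the uniqueness statement cited from \cite{HeddenLevine} ensures that the mapping cone is nevertheless well-defined up to chain homotopy equivalence in the L-space case, and for $Y = S^3$ one has the explicit reflection along the line $i = j$ as a distinguished representative.
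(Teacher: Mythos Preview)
The paper does not supply its own proof of this theorem: it is stated as a background result with the citation \cite{rational}, and no argument is given in the text. Your outline follows the strategy of Ozsv\'{a}th--Szab\'{o} in that reference (realize $p/q$-surgery as iterated integer surgery on a chain of unknots, apply the integer mapping cone formula, and collapse), so there is nothing to compare and your plan is consistent with the cited source.
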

When $q=1,$ namely for the integer framed surgery on $K,$ we can simply write $A_s$ and $B_s$ in the place of $(t,A_s(C))$ and $(t,B_s(C))$. In the case of $n$-surgery for some $n\in \Z,$ $\X^{\infty}(C)$ is the mapping cone of 
\begin{align} \label{eq:x_infinity1}
      \bigoplus^{g-1}_{s=-g+1}A_s(C) \xrightarrow{v^{\infty}_s+h^{\infty}_s} \bigoplus^{g-1}_{s=-g+n+1}B_s(C),
\end{align} 
which similarly splits over $\Spin^c$ structures. The same algorithm works for the hat version as well. Recall that there is an $\II$-filtration over $\X^{\infty}(C)$, where for $[\x,i,j] \in (t, A_{s}(C))$,
\[
\II([\x,i,j]) = \max\{i,j-s\} 
\]
and for $[\x,i,j] \in (t,B_{s}(C))$,
\[\II([\x,i,j]) = i.\]
If we define $\Xa(C) = \X^\infty (C)|_{\II=0},$ then the complex $\CFa(Y_{p/q}(K))$ is chain homotopy equivalent to $\Xa(Y,K)$. We denote $\hat{A}_s = A_s |_{\II=0}, \hat{B}_s = 
 B_s |_{\II=0}$ and denote the induced maps by $\hat{v}_s$ and $\hat{h}_s$ respectively.
\subsection{Dual knot surgery formula} \label{subsec: dualknot}
In \cite{HeddenLevine}, Hedden and Levine defined a refinement of the above construction, which for $n \neq 0$ outputs  $\CFKi(S_n^3(K),\mu)$, the knot Floer chain complex of the meridian (dual knot) of $K$ in the surgery. In fact this construction extends to rational surgeries on knots inside rational homology spheres. For the sake of the simplicity, we record here only the case of integer surgery on knots in $S^3.$ 
Following the previous conventions, define $\X^{\infty}_K(C)$ to be the mapping cone of
 \begin{align} \label{eq:x_infinity2}
      \bigoplus^{g}_{s=-g+1}A_s(C) \xrightarrow{v^{\infty}_s+h^{\infty}_s} \bigoplus^{g}_{s=-g+n+1}B_s(C).
\end{align}
and define the double-filtration $(\II, \JJ)$ and the Maslov grading over $\X^{\infty}_K(C)$ as follows.
\begin{align}
\intertext{For $[\x,i,j] \in A_{s}(C)$,}
\label{eq: filtration_s3_1}
 \II([\x,i,j]) &= \max\{i,j-s\} \\
 \label{eq: filtration_s3_2}
 \JJ([\x,i,j]) &= \max\{i-1,j-s\} + \frac{2s+n-1}{2n} \\
\label{eq: grt-def-A} \gr([\x,i,j]) &= \widetilde{\gr}([\x,i,j]) + \frac{(2s-n)^2}{4n} + \frac{2-3\operatorname{sign}(n)}{4}  
\intertext{and for $[\x,i,j] \in B_{s}(C)$,}
 \label{eq: filtration_s3_3}
 \II([\x,i,j]) &= i \\
  \label{eq: filtration_s3_4}
 \JJ([\x,i,j]) &= i-1 + \frac{2s+n-1}{2n}
  \\ \label{eq: grt-def-B} \gr([\x,i,j]) &= \widetilde{\gr}([\x,i,j]) + \frac{(2s-n)^2}{4n} + \frac{-2-3\operatorname{sign}(n)}{4}
\end{align}
where $ \widetilde{\gr}$ indicates the Maslov grading  in the original complex.
Collapsing the $\JJ$-filtration in $\X_K^{\infty}(C)$ recovers the  complex $\X^{\infty}(C)$ in the original construction by Ozsv\'{a}th  and Szab\'{o}. In other words, $\X_K^{\infty}(C)$ is chain homotopy equivalent to $\X^{\infty}(C)$ as \edit{an} unfiltered chain complex.
\begin{theorem}[\cite{HeddenLevine}]
    The complex $\CFKi(S^3_{n}(K),\mu)$ is filtered chain homotopy equivalent to $\X_K^{\infty}(\CFKi(S^3,K))$, where $\mu$ is the image of the meridian of $K$ in the surgery.
\end{theorem}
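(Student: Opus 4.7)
The plan is to mimic Ozsv\'ath--Szab\'o's proof of the rational surgery mapping cone formula while tracking an additional basepoint that records the dual meridian $\mu$. Concretely, I would start with a doubly-pointed Heegaard diagram $(\Sigma, \boldsymbol\alpha, \boldsymbol\beta, z, w)$ for $(S^3, K)$ and perform $n$-surgery by replacing the $\beta$-circle $\mu$ representing the meridian with a curve $\lambda_n$ representing the $n$-framed longitude. The old meridian $\mu$ now sits in the surgered manifold as the dual knot, and placing a new basepoint $w'$ adjacent to (a remaining segment of) $\mu$ upgrades the diagram to one for the triply-pointed pair $(S^3_n(K), \mu, z, w')$. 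The Alexander filtration of $\mu$ reads off intersection numbers of Whitney disks with $w'$ minus those with $z$, while the original $(i,j)$ filtration continues to be recorded by $z$ and $w$. Thus $\CFKi(S^3_n(K), \mu)$ acquires a triple $(i, j, \mathcal{J}')$ structure directly from the geometry.

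Second, I would apply the standard truncation and large-surgery arguments to identify this complex with a mapping cone whose summands are the $A_s$ and $B_s$ introduced in the previous subsection, and whose connecting maps count holomorphic triangles in pairs-of-pants diagrams associated with framings $n$ and $n+p$. The $v^\infty_s$ maps correspond to triangles that miss the new basepoint, while the $h^\infty_s$ maps, being given by $U^s \circ \textit{flip}$, pick up both a $U$-power and the skew-filtered flip coming from exchanging the roles of $z$ and $w$ during the Heegaard moves needed to realize the surgery. The second filtration $\mathcal{J}$ is then read off each summand by counting how often representative disks meet $w'$. For $B_s$, which models a large surgery and sees only the $i$-direction once the $j$-information is suppressed by the $s$-shift, one should recover $\mathcal{J} = i - 1 + (2s+n-1)/(2n)$, with the affine shift coming from the $\operatorname{Spin}^c$-to-$\Z$ identification. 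For $A_s$ the $\max\{i-1, j-s\}$ reflects the fact that the flip-map summand in $h^\infty_s$ forces one to take the maximum after a unit shift in the $i$-direction.

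Third, I would pin down the Maslov gradings in (\ref{eq: grt-def-A}) and (\ref{eq: grt-def-B}) via the standard absolute grading shift formulas for two-handle cobordisms, namely by identifying $(2s-n)^2/(4n)$ with the $d$-invariant contribution from the lens space component of the cobordism and tracing the $\pm 3\operatorname{sign}(n)/4$ discrepancy between $A_s$ and $B_s$ through the Euler characteristic of the triangle counts. As a consistency check, collapsing the $\mathcal{J}$-filtration in $\X_K^\infty(C)$ should visibly recover $\X^\infty(C)$, matching Ozsv\'ath--Szab\'o's original output. The main obstacle, and where I expect the bulk of the work to lie, is the careful verification of the precise grading and filtration constants on $A_s$ versus $B_s$: this requires tracking how basepoint moves, stabilizations, and the (a priori non-unique) choice of flip map interact with the triangle counts, and invoking the uniqueness of the flip map up to skew-filtered homotopy (as noted after the definition above) to conclude that the resulting doubly-filtered chain homotopy equivalence class is independent of the choices.
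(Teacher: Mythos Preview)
The paper does not prove this theorem; it is quoted from \cite{HeddenLevine} and used as a black box input to the computations in Section~\ref{sec: compute}. There is therefore no ``paper's own proof'' to compare your proposal against.

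That said, your outline is a reasonable high-level summary of the strategy Hedden and Levine actually pursue: start from a doubly-pointed diagram for $(S^3,K)$, introduce a third basepoint recording the dual meridian, and rerun the Ozsv\'ath--Szab\'o large-surgery and mapping-cone arguments while tracking the extra filtration. The caveat is that your sketch glosses over exactly the places where the real work in \cite{HeddenLevine} lies: the identification of the $\JJ$-filtration on $A_s$ and $B_s$ with the formulas \eqref{eq: filtration_s3_2} and \eqref{eq: filtration_s3_4} is not a matter of ``reading off'' intersection numbers with a new basepoint, but requires a careful analysis of how the filtration on the large-surgery model transports through the truncation and the triangle maps, together with a nontrivial argument that the flip map is filtered in the correct skew sense (this is where the $\max\{i-1,j-s\}$ rather than $\max\{i,j-s\}$ appears, and it is not obvious from the picture you describe). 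If you intend to actually reprove the result rather than cite it, you should consult \cite{HeddenLevine} directly; for the purposes of the present paper, no proof is needed.
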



\section{Contact surgery}\label{sec:contactpreli}
In this section we talk about the preliminaries on contact surgery, contact invariant and LOSS invariant. We will state the naturality theorems of those invariants in Section \ref{subsec: Naturality}, which will play a crucial role in the proofs. 
\subsection{Contact surgery and DGS algorithm} \label{general contact surgery}
Given an oriented Legendrian knot $L$ in a contact \edit{3-manifold} $(Y,\xi)$ there exist a contact framing defined by a vector field along $L$ that is always transverse to the 2-plane field $\xi$. In \cite{DGst} Ding and Geiges define a notion of contact $r$-surgery on $L$ which, for a choice of rational number \edit{$r\neq0$}, gives rise to another contact \edit{3-manifold} $(Y',\xi_r(L))$. (In general there are choices \edit{involved} to completely determine the resulting contact structure for a general contact rational $r$-surgery. \edit{In this paper, whenever such a choice is required, we will always take the negative stabilization.}) In \cite{DGS} Ding, Geiges and Stipsicz prove the following theorems. 

\begin{theorem}
    Every (closed, orientable) contact \edit{3-manifold} $(Y,\xi)$ can
be obtained via contact $(\pm 1)$-surgery on a Legendrian link in $(S^3,\xi_{std})$.
\end{theorem}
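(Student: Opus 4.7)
The plan is to argue in two stages: first, produce some contact surgery description of $(Y,\xi)$ as a contact surgery on a Legendrian link in $(S^3,\xi_{std})$, and then convert any non-unit coefficients into $\pm 1$-coefficients.

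For the first stage, the cleanest route is through Giroux's correspondence. Pick a compatible open book $(\Sigma,\phi)$ for $(Y,\xi)$ and factor the monodromy as a product of Dehn twists $\phi = \tau_{\gamma_1}^{\epsilon_1}\cdots \tau_{\gamma_k}^{\epsilon_k}$ with $\epsilon_i \in \{\pm 1\}$. Starting from a standard open book for $(S^3,\xi_{std})$ and successively modifying the monodromy by these Dehn twists corresponds, on the contact side, to performing contact $(-\epsilon_i)$-surgery on a Legendrian realization of $\gamma_i$ inside a page, since a positive (resp.\ negative) Dehn twist is induced by contact $(-1)$-surgery (resp.\ contact $(+1)$-surgery) on such a Legendrian knot. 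Accumulating these Legendrian curves into a link in $(S^3,\xi_{std})$ then yields a $\pm 1$-surgery description directly, provided one knows that positive page stabilizations of the standard open book on $S^3$ can be carried out at the Legendrian level.

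An alternative route starts from a Lickorish--Wallace integer surgery presentation of $Y$ in $S^3$: Legendrian-realize each link component, stabilize enough times to rewrite each smooth framing $n_i$ as a contact surgery coefficient $r_i = n_i - \tb(L_i) < 0$, and then separately convert each $r_i \neq -1$ into a chain of contact $(-1)$-surgeries via continued fractions. Concretely, for $r<0$ on a component $L$, expand $r$ as a negative continued fraction $[a_0;a_1,\ldots,a_k]$ with $a_j \leq -2$, replace $L$ with a chain consisting of $L$ together with successive Legendrian pushoffs of the previous component, each stabilized the correct number of times to realize the next $a_j$ as a contact framing, and perform contact $(-1)$-surgery on every member of the chain. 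The smooth slam-dunk identity for rational surgery then matches the intended smooth topology on the nose, and the stabilization convention fixed in Section \ref{general contact surgery} (always negative) pins down the contact structure unambiguously at each step. The case of positive rational surgeries is handled analogously with contact $(+1)$-surgeries, together with the cancellation lemma that $(+1)$- and $(-1)$-surgery on a Legendrian push-off pair cancel.

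The main obstacle I expect is the bookkeeping in the reduction step: verifying that the specific choice of stabilizations on the successive pushoffs, together with the pattern of $\pm 1$-coefficients dictated by the continued fraction expansion, really realizes the prescribed contact structure and not merely the correct underlying smooth $3$-manifold. Once this is carried out for a single component, iterating over the link reduces any rational contact surgery description to one using only contact $\pm 1$-surgeries, which combined with either existence argument above yields the theorem.
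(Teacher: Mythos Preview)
The paper does not actually prove this theorem; it is quoted as a background result from \cite{DGS} and no argument is given. So there is no ``paper's own proof'' to compare against beyond the original Ding--Geiges--Stipsicz argument.

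Your first approach, via Giroux's correspondence and factoring the monodromy of a supporting open book into Dehn twists, is essentially the standard proof and is the one used in \cite{DGS}. The key ingredients you identify (any page $\Sigma$ also supports an open book for $(S^3,\xi_{std})$ after positive stabilizations; a positive/negative Dehn twist along a non-separating curve on a page corresponds to contact $(\mp 1)$-surgery on its Legendrian realization) are correct, and the sketch would flesh out into a complete proof.

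Your second approach, however, has a genuine gap. Starting from a Lickorish--Wallace presentation of $Y$ and Legendrian-realizing the components only controls the underlying smooth $3$-manifold; it gives no mechanism for hitting the prescribed contact structure $\xi$. After your stabilizations and continued-fraction replacements you will indeed obtain \emph{some} contact structure on $Y$ via $(\pm 1)$-surgeries, but nothing in the argument forces it to be $\xi$. You flag this concern only for the reduction step (converting a rational coefficient to a chain of $\pm 1$'s), but the real problem is upstream: the Lickorish--Wallace input carries no contact information at all. To make this route work you would need an independent existence statement---for instance, that every $(Y,\xi)$ admits a contact surgery presentation with arbitrary rational coefficients---and that is precisely what the open-book argument supplies. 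In short, the second route is not an alternative proof but rather the reduction half of the DGS algorithm, which presupposes the existence half.
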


Moreover they describe an algorithm to transform any rational $r$-surgery on a Legendrian knot $L$ \edit{into} a sequence of $(\pm 1)$-surgeries on some (stabilizations of) push-offs of $L$. The procedure can be divided into the following two theorems that correspond to the two cases when $r<0$, and $r>0$ respectively.

\begin{theorem}[DGS algorithm for $r<0$ \cite
{DGS}] {\label{DGS negative ration contact surgery}}
Given a Legendrian knot $L$ in $(Y,\xi)$. Let $0>-x/y=r\in \mathbb{Q}$ be a contact surgery coefficient with continued fraction \begin{equation}
    -\frac{x}{y}=[a_1+1,a_2,\edit{\dots},a_\l]^{-}=a_1+1-\cfrac{1}{a_2-\cfrac{1}{\edit{\dots}-\cfrac{1}{a_\l}}}
\end{equation}
where each $a_i\leq -2$. Then any contact $(x/y)$-surgery on $L$ can be described as contact surgery along a link $L_1 \cup\edit{\dots}\cup L_{\ell}$, where 
\begin{itemize}
     
    \item $L_1$ is obtained from $L$ by stabilizing $|a_1+2|$ times.
    \item $L_j$ is obtained from a Legendrian push-off of $L_{j-1}$ by stabilizing $|a_j+2|$ times, for $j\geq 2$.
    \item The contact surgery coefficient \edit{is} $-1$ on each $L_j$.
\end{itemize}
    
\end{theorem}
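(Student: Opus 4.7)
The plan is to induct on the length $\ell$ of the continued fraction expansion, using the basic relation between contact and Seifert framings: contact $r$-surgery on a Legendrian $L$ is topologically $(r+\tb(L))$-surgery, and each negative stabilization of $L$ drops the Thurston--Bennequin number by one while preserving the smooth knot type.

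In the base case $\ell = 1$, the coefficient $r = a_1 + 1 \leq -1$ is an integer. Negatively stabilizing $L$ a total of $|a_1+2|$ times yields a Legendrian $L_1$ with $\tb(L_1) = \tb(L) + a_1 + 2$, so contact $(-1)$-surgery on $L_1$ is topologically $(\tb(L_1) - 1) = (\tb(L) + a_1 + 1)$-framed surgery on the underlying smooth knot, matching contact $(a_1+1)$-surgery on $L$. The contact structures agree because the all-negative stabilization convention for contact $r$-surgery, combined with Honda's classification of tight contact structures on the glued-in solid torus, selects the same extension on both sides.

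For the inductive step with $\ell \geq 2$, I would first perform contact $(a_1+1)$-surgery on $L$, realized by the base case as contact $(-1)$-surgery on $L_1$. A Legendrian push-off of $L_1$ in the resulting contact manifold then carries the remaining rational surgery with coefficient $[a_2, \ldots, a_\ell]^{-}$, a continued fraction of length $\ell - 1$. Smoothly, the standard slam-dunk Kirby move confirms that this two-step procedure produces the same $3$-manifold as the original contact $(-x/y)$-surgery on $L$. Applying the inductive hypothesis to the rational surgery on this push-off then expresses it as contact $(-1)$-surgeries on stabilized push-offs $L_2, \ldots, L_\ell$, assembling the full link $L_1 \cup \cdots \cup L_\ell$ described in the theorem.

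The main obstacle is not the smooth topology, which reduces to routine continued-fraction Kirby calculus, but keeping track of which contact structure is produced at each step. Since contact $r$-surgery for non-integer $r$ is not uniquely determined by $r$, one must verify that the DGS recipe and the all-negative convention yield matching tight extensions on each glued-in solid torus. The cleanest route is via Honda's classification of tight contact structures on solid tori, indexed by edge-paths in the Farey tessellation, and checking that the combinatorics of $[a_1+1, a_2, \ldots, a_\ell]^{-}$ agrees with the edge-path from $-\infty$ to $-x/y$ prescribed by the convention; matching these two combinatorial descriptions is where the real contact-geometric content lies.
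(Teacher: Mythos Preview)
The paper does not supply its own proof of this theorem: it is quoted as a result of Ding--Geiges--Stipsicz and used as a black box throughout. So there is no argument in the paper to compare your proposal against.

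That said, your outline is the standard one and matches the strategy in the original DGS paper: induct on the length of the continued fraction, peel off a single contact $(-1)$-surgery at each step, and use Honda's classification of tight contact structures on solid tori to match the contact extensions. One point to tighten in your inductive step: the contact surgery coefficient carried by the Legendrian push-off after the first $(-1)$-surgery is not literally $[a_2,\ldots,a_\ell]^-$ on the nose; you have to track how the contact framing of the push-off sits in the surgered manifold, and the bookkeeping is what produces the stabilization count $|a_j+2|$ (rather than, say, $|a_j+1|$) at each subsequent stage. In DGS this is handled by a direct framing computation together with the slam-dunk, and you have correctly identified that the genuine content lies in matching the tight structures on the glued torus via the Farey combinatorics.
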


\begin{theorem}[\cite
{DGS} DGS algorithm for $r>0$] {\label{DGS positive contact srugery}}
Given a Legendrian knot $L$ in $(Y,\xi)$. Let $0<x/y=r\in \mathbb{Q}$ be a contact surgery coefficient. Let $e\in \mathbb{Z}$ be the minimal positive integer such that $\frac{x}{y-ex}<0$, with the continued fraction \begin{equation}
    \frac{x}{y-ex}=[a_1,a_2,\edit{\dots},a_\l]=a_1-\cfrac{1}{a_2-\cfrac{1}{\edit{\dots}-\cfrac{1}{a_\l}}}
\end{equation}
where each $a_j\leq -2$. Then any contact $(x/y)$-surgery on $L$ can be described as contact surgery along a link $(L_0^1 \cup L_0^2 \cup \edit{\dots} \cup L_0^e)\cup L_1 \cup\edit{\dots}\cup L_{\ell}$, where 
\begin{itemize}
     \item $L_0^1,\edit{\dots},L_0^e$ are Legendrian push-offs of $L$.
    \item $L_1$ is obtained from a Legendrian push-off of $L_0^e$ by stabilizing $|a_1+1|$ times.
    \item $L_j$ is obtained from a Legendrian push-off of $L_{j-1}$ by stabilizing $|a_j+2|$ times, for $j\geq 2$.
    \item The contact surgery coefficients are $+1$ on each $L_0^i$ and $-1$ on each $L_j$.
\end{itemize}
    
\end{theorem}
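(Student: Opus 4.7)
The plan is to reduce positive rational contact surgery to the already-established negative case (Theorem \ref{DGS negative ration contact surgery}) via a contact analogue of a Rolfsen twist. The essential observation is that contact $(+1)$-surgery on a Legendrian push-off of $L$ can be absorbed into a shift of the surgery coefficient on $L$ itself, converting a positive surgery coefficient into a negative one after sufficiently many absorptions.

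First, I would establish the key ``cancellation/slam-dunk'' lemma: adding contact $(+1)$-surgery on a Legendrian push-off $L^0$ of $L$ to a diagram containing contact $(x/y)$-surgery on $L$ is equivalent, via a Kirby move, to contact $\tfrac{x}{y-x}$-surgery on $L$ alone. The smooth framing arithmetic follows because the contact push-off carries contact framing $\tb(L)$, so smooth $(+1)$ relative to the contact framing combines with the original $\tb(L)+x/y$ framing on $L$ precisely as a Rolfsen twist would predict. At the contact level, one checks that the cancellation preserves the contact structure up to contactomorphism, consistent with the all-negative stabilization convention. Iterating this $e$ times consumes push-offs $L_0^1,\ldots,L_0^e$ and shifts the effective coefficient on the remaining copy of $L$ from $\tfrac{x}{y-(i-1)x}$ to $\tfrac{x}{y-ix}$. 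By the minimality of $e$, after $e$ iterations the effective coefficient is $\tfrac{x}{y-ex}<0$.

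Second, I would feed this negative contact surgery problem into Theorem \ref{DGS negative ration contact surgery} using the given continued fraction $\tfrac{x}{y-ex}=[a_1,\ldots,a_\l]^-$. Here the stabilization count on $L_1$ is $|a_1+1|$ rather than $|a_1+2|$ appearing in the purely negative algorithm: this shift by one reflects that the effective coefficient is $\tfrac{x}{y-ex}$ expanded with leading term $a_1$ (so the ``$+1$'' augmentation present in the negative algorithm's continued fraction $[a_1+1,a_2,\ldots,a_\l]^-$ is absent here). Compiling this with the $e$ positive push-off surgeries from the first step yields exactly the link $(L_0^1\cup\cdots\cup L_0^e)\cup L_1\cup\cdots\cup L_\l$ with the prescribed contact surgery coefficients and stabilizations.

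The main obstacle is the careful bookkeeping of (i) the smooth framings under the iterated Rolfsen twist, (ii) the matching of continued fraction expansions across the reduction (in particular justifying the $|a_1+1|$ vs.\ $|a_1+2|$ discrepancy), and (iii) verifying that the resulting contact structure is insensitive to the order in which the $(+1)$- and $(-1)$-surgeries are performed and to the choice of push-offs, so that under the all-negative stabilization convention the outcome agrees with the canonical contact structure on positive rational contact $r$-surgery along $L$.
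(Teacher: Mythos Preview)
The paper does not supply its own proof of this theorem: it is quoted from \cite{DGS} as background, so there is nothing in the present paper to compare against. That said, your outline is essentially the argument Ding--Geiges--Stipsicz give. The core input is their Proposition~7, which says that contact $(r)$-surgery on $L$ with $r>0$ is equivalent to contact $(+1)$-surgery on $L$ together with contact $\tfrac{r}{1-r}$-surgery on a Legendrian push-off; iterating $e$ times brings the residual coefficient below zero and then one invokes the negative algorithm exactly as you describe. Your explanation of the $|a_1+1|$ versus $|a_1+2|$ shift is correct: writing $\tfrac{x}{y-ex}=[a_1,a_2,\dots,a_\ell]^-$ already has the leading term $a_1$ rather than $a_1+1$, so the first stabilization count is $|a_1+1|$ rather than $|a_1+2|$. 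One small framing correction: your ``slam-dunk'' should be phrased in the opposite direction to match DGS---one \emph{introduces} the $e$ contact $(+1)$-surgeries on push-offs and shows the residual coefficient on the last push-off becomes $\tfrac{x}{y-ex}$, rather than ``absorbing'' them into the original $L$---but the arithmetic and the contact-geometric content are the same.
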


The choices we mentioned in the beginning of the section correspond to the choices of stabilization for each $L_j$, each of which can be either positive or negative. As we mentioned in the introduction we only consider the case when the stabilizations are all negative. The most important cases we are interested in are the negative $(-n)$-surgery and positive $(\frac{n+1}{n})$-surgery (we will consider general $-x/y$ later). If we follow the above two algorithms carefully and only consider negative stabilization, the contact \edit{$(-n)$-surgery} on a Legendrian knot $L$ is the same as doing contact $(-1)$-surgery along the $n-1$ times negatively stabilized $L$, and the contact \edit{$(\frac{n+1}{n})$-surgery} on $L$ is the same as doing contact surgery along link $L \cup L_1$ where $L_1$ is the $n$ times negative stabilization of a Legendrian push-off of $L$ with \edit{contact surgery coefficient} $+1$ on $L$, and $-1$ on $L_1$. 

\subsection{Contact invariants and LOSS invariants }
Given a contact 3-manifold $(Y,\xi)$ Ozsv\'{a}th-Szab\'{o} \cite{OSHFandcontactstructures} and later Honda-Kazez-Mati\'c \cite{HKMcontactclassinHF} showed that $(Y,\xi)$ determines a distinguished element $c(\xi)\in \HFa(-Y)$, called the Heegaard Floer contact invariant. Subsequently, for a Legendrian knot $L$ in $(Y,\xi)$, Lisca-Ozsv\'ath-Stipsicz-Szab\'o defined 
the ``LOSS invariant'' $\mathfrak{L}(L) \in \HFKm(-Y,L)$ \cite{LOSS}. We refer the reader to the above papers for precise definitions. 

For any 3-manifold $Y$ and a knot $K$ in $Y$ there is a natural chain map 
$$g: \CFKm(Y,K,\mathfrak{t}) \rightarrow \CFa(Y,\mathfrak{t})$$
\edit{obtained by} setting $U=1$. 
The map $$ G: \HFKm(Y,K,\mathfrak{t}) \rightarrow \HFa(Y,\mathfrak{t})$$ is the homology map induced by $g$. The LOSS invariant is related to the contact invariant in the following way.

\begin{lemma}[\cite{LOSS}] \label{G maps LOSS to Contact invariant}
    Let $L$ be an oriented null-homologous Legendrian knot in a contact 3-manifold $(Y,\xi)$, then the map \begin{equation} \label{LOSS to contact invariant}
        G: \HFKm(-Y,L,\mathfrak{t}) \rightarrow \HFa(-Y,\mathfrak{t})
    \end{equation} has the property that $$G(\mathfrak{L}(L))=c(\xi).$$
\end{lemma}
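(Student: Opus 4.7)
The plan is to verify the identity at the chain level using the open book constructions that define both invariants. The guiding principle is that the LOSS invariant $\mathfrak{L}(L)$ and the contact invariant $c(\xi)$ are, by their very definitions, represented by the \emph{same} distinguished intersection point $\mathbf{x}$ in a suitably chosen Heegaard diagram, and the chain map $g$ defined by setting $U=1$ is designed precisely to intertwine them.

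First, I would invoke the construction of an open book decomposition $(B, \pi)$ of $Y$ supporting $\xi$ adapted to $L$, in the sense that $L$ sits on a page of the open book as a non-separating curve. From such an open book one produces a pointed Heegaard diagram $(\Sigma, \boldsymbol{\alpha}, \boldsymbol{\beta}, w, z)$ for $(-Y, L)$ in the standard way (gluing two pages along the binding, placing $\boldsymbol{\beta}$ curves as pushoffs of $\boldsymbol{\alpha}$ curves under the monodromy, and placing $w, z$ on either side of $L$). In this diagram there is a canonical intersection point $\mathbf{x}_\xi \in \mathbb{T}_\alpha \cap \mathbb{T}_\beta$ whose coordinates lie on the page away from the stabilizing regions; by definition
\[
\mathfrak{L}(L) = [\mathbf{x}_\xi] \in \HFKm(-Y, L, \mathfrak{t}_\xi), \qquad c(\xi) = [\mathbf{x}_\xi] \in \HFa(-Y, \mathfrak{t}_\xi),
\]
where on the left $\mathbf{x}_\xi$ is viewed as a generator of $\CFKm_0(-Y,L)$ (i.e., at filtration level $j=0$ and with $U$-power $0$) and on the right as a generator of $\CFa(-Y)$ obtained from the same diagram by forgetting the $z$-basepoint.

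Next I would analyze the chain map $g\co \CFKm(-Y,L,\mathfrak{t}_\xi) \to \CFa(-Y,\mathfrak{t}_\xi)$ obtained by setting $U=1$. Concretely, on the same Heegaard diagram, $g$ is the identity on generators and simply drops the bookkeeping of the $z$-basepoint (equivalently, it forgets the Alexander filtration and inverts $U$, then truncates). Under this identification, the cycle $\mathbf{x}_\xi$ representing $\mathfrak{L}(L)$ is sent to the cycle $\mathbf{x}_\xi$ representing $c(\xi)$. Passing to homology then yields
\[
G(\mathfrak{L}(L)) = G([\mathbf{x}_\xi]) = [\mathbf{x}_\xi] = c(\xi),
\]
which is the desired identity. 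Finally, I would point out that well-definedness of each side is independent of the choice of open book (by the invariance theorems of Ozsv\'{a}th--Szab\'{o}, Honda--Kazez--Mati\'c, and Lisca--Ozsv\'{a}th--Stipsicz--Szab\'o), so it suffices to check the identity for a single convenient adapted open book.

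The only subtle point, and the main item requiring care, is to verify that the chain-level representative chosen for $\mathfrak{L}(L)$ in $\CFKm$ (with its specific $U$-power and filtration grading) maps under the specific formula $U \mapsto 1$ to exactly the chain-level representative used to define $c(\xi)$ in $\CFa$, rather than to a homologous but distinct cycle. This is essentially a matter of comparing the conventions in \cite{LOSS} with those in \cite{OSHFandcontactstructures}, and amounts to checking that the distinguished generator $\mathbf{x}_\xi$ sits in the correct filtration level ($j=0$, so no powers of $U$ are introduced upon setting $U=1$) so that $g(\mathbf{x}_\xi)$ is literally the Ozsv\'{a}th--Szab\'{o} contact class, not merely a cycle in the same homology class.
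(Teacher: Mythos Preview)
The paper does not supply its own proof of this lemma; it is quoted directly from \cite{LOSS} and used as a black box. Your proposal correctly reconstructs the essential argument from that source: both $\mathfrak{L}(L)$ and $c(\xi)$ are represented by the same distinguished generator $\mathbf{x}_\xi$ in a Heegaard diagram coming from an open book adapted to $L$, and the chain map $g$ (setting $U=1$, i.e.\ forgetting the second basepoint) carries one representative to the other tautologically. This is exactly the mechanism in \cite{LOSS}, so there is nothing to compare---your sketch is faithful to the original and there is no alternative proof in the present paper.
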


Another important property of the LOSS invariant is   that it is  unchanged under negative stabilization.

\begin{theorem}[\cite{LOSS}] \label{LOSS invariant under negative stabilization}
Suppose that $L$ is an oriented Legendrian knot and denote the negative  stabilizations of $L$ as $L^-$  Then,
$\mathfrak{L}(L^-) = \mathfrak{L}(L)$.
\end{theorem}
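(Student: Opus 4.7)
The plan is to prove this at the level of chain complexes, using the open-book-theoretic definition of the LOSS invariant given in \cite{LOSS}. Choose an open book decomposition $(B,\pi)$ of $Y$ supporting $\xi$ such that $L$ sits on a page $S$ with its page framing equal to its contact framing; by the work of Akbulut--Ozbagci such an open book always exists after sufficient stabilization. Following \cite{LOSS}, one builds a pointed Heegaard diagram $(\Sigma, \boldsymbol{\alpha}, \boldsymbol{\beta}, w, z)$ for $(-Y, L)$ from a basis of arcs on $S$ chosen compatibly with $L$, and defines $\mathfrak{L}(L) \in \HFKm(-Y,L)$ as the homology class of a specific intersection point $x(L)$ whose coordinates are the ``top'' points of the $\alpha$--$\beta$ arc pairs sitting on $S$.

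First I would write down a local model for the negative stabilization. Geometrically, $L^-$ agrees with $L$ outside a Darboux ball and inside that ball replaces a straight arc on the page with a down-right zig-zag. Up to a single Murasugi-type stabilization of the open book along an arc that meets $L$ transversely in one point, $L^-$ can be arranged to sit on a page of the stabilized open book. This open-book stabilization adds a one-handle to $\Sigma$ together with a new pair $(\alpha_0, \beta_0)$ of curves meeting in a single intersection point $y_0$. I would then explicitly write both the Heegaard diagram associated to $(-Y,L)$ and the one associated to $(-Y, L^-)$ coming from this stabilized open book.

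The key comparison is to identify $x(L^-)$ in the stabilized diagram. A direct inspection of the local picture shows that $x(L^-)$ consists of the same arc-top intersection points that make up $x(L)$, augmented by the new intersection point $y_0$ produced by the stabilization, and that the basepoints $w$ and $z$ are placed so that $x(L^-)$ lies in the same relative $\Spin^c$ and Alexander grading as $x(L)$. Since Heegaard-diagram stabilization induces a canonical filtered chain homotopy equivalence of $\CFKm$ sending the old generator to the new augmented one, we conclude $\mathfrak{L}(L^-) = \mathfrak{L}(L)$.

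The main obstacle is the bookkeeping of basepoints through the local model: one has to verify that the new intersection point represents $\mathfrak{L}(L^-)$ itself rather than $U\cdot \mathfrak{L}(L^-)$. This is precisely the step where the sign of the stabilization enters, because the positive stabilization $L^+$ differs from $L^-$ by swapping which side of the zig-zag carries the basepoint $w$ and which carries $z$; in the positive case the disk bounded in the local region crosses $z$, producing the extra factor of $U$ that makes $\mathfrak{L}(L^+)$ vanish in $\HFKa$, while in the negative case the corresponding disk crosses no basepoint. Drawing this local picture carefully and checking the disk count is the heart of the proof.
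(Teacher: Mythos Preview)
The paper does not actually prove this theorem: it is stated with the citation \cite{LOSS} and no proof environment follows. It is used as a black box in Section~\ref{sec:proof}. So there is no ``paper's own proof'' to compare your proposal against; you are essentially sketching the argument from the original Lisca--Ozsv\'ath--Stipsicz--Szab\'o paper.

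Your outline is in the right spirit and matches the strategy in \cite{LOSS}: realize the negative stabilization via a positive (Giroux) stabilization of an adapted open book, and check that under the induced stabilization of the pointed Heegaard diagram the distinguished generator $x(L)$ is carried to $x(L^-)$. Two small points worth tightening. First, what you call a ``Murasugi-type stabilization'' should be a positive Giroux stabilization of the open book; the arc along which you stabilize must be chosen so that the new monodromy twist interacts with $L$ in the prescribed way, and making this precise is part of the argument in \cite{LOSS}. Second, your final paragraph conflates two slightly different issues: for positive Legendrian stabilization the generator $x(L^+)$ is still a cycle in $\CFKm$, and one shows directly that its class equals $U\cdot \mathfrak{L}(L)$ (hence vanishes in $\HFKa$); it is not that a holomorphic disk ``produces an extra factor of $U$'' in the definition of the generator, but rather that the Alexander grading shifts. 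With those clarifications your sketch would be a faithful summary of the original proof.
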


\subsection{Naturality of contact invariant and LOSS invariant under contact surgery} \label{subsec: Naturality}
\edit{Recall that given a rational number $p/q$ with $p,q$ relatively prime and $0<q$, and write $p=mq-r$ for some integer $m$ and rational number $r$ with $0\leq r<q$. Then $p/q$ surgery on $K$ in a rational homology sphere $Y$ induces a \emph{rational surgery cobordism} $W:Y\#{-L(q,r)} \rightarrow Y_{p/q}(K)$, where $Y\#{-L(q,r)}$ is the result of $(q/r)$-surgery on a meridian of $K$, and $W$ is given by attaching a $2$-handle along
the image of $K$ after this surgery with framing $m$.}
\begin{theorem}[{\cite[Theorem 1.1]{MarkBulentNaturalityofContactinvariant}}] \label{naturality of contact invariant}
    Let $L$ be an oriented null-homologous Legendrian knot in a contact rational homology sphere $(Y,\xi)$ with non-vanishing contact invariant $c(\xi)$, let $0<x/y\in \mathbb{Q}$ be the contact surgery coefficient and $p/q=\tb(L)+x/y$ be the corresponding smooth surgery coefficient. Let $W:Y\#{-L(q,r)} \rightarrow Y_{p/q}(L)$ be the corresponding rational surgery cobordism, where $p=mq-r$ with $0\leq r<q$, and consider $\xi_{x/y}(L)$ on $Y_{p/q}(L)$ (When we write smooth surgery on a Legendrian knot $L$ we meant to view $L$ as it smooth knot type).
    \begin{enumerate}
        \item There exists a $\Spin^c$ structure $\mathfrak{s}$ on $W$ and a generator $\tilde{c}\in \HFa(L(q,r),\edit{\mathfrak{s}|_{ L(q,r)}})$ such that the homomorphism 
        
        $$F_{-W,\mathfrak{s}}: \edit{\HFa(-Y \# L(q,r),\mathfrak{s}|_{-Y \# L(q,r)}) \rightarrow \HFa(-Y_{p/q}(L),\mathfrak{s}|_{-Y_{p/q}(L)})}$$ 
        
        induced by $W$ with its orientation reversed satisfies 
        
        $$F_{-W,\mathfrak{s}}(c(\xi)\otimes \tilde{c})= c(\xi_{x/y}(L)).$$
        
        \item Moreover if both $\xi$ and $\xi_{x/y}(L)$ have torsion first Chern class\edit{, then} $\mathfrak{s}$ has the property that
        
        $$\pm \langle c_1(\mathfrak{s}),[\Tilde{Z}] \rangle =p+(\rot(L)-\tb(L))q-1$$
 where $Z$ is a Seifert surface for $L$ and $[\Tilde{Z}]$ is the result of capping off $Z$ with $q$ parallel \edit{copies} of the core of the handle in $W$.
    \end{enumerate}
\end{theorem}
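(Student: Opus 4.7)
The plan is to invoke the DGS algorithm (Theorem \ref{DGS positive contact srugery}) to decompose contact $(x/y)$-surgery on $L$ into $e$ contact $(+1)$-surgeries on Legendrian push-offs $L_0^1,\dots,L_0^e$ followed by contact $(-1)$-surgeries on stabilized push-offs $L_1,\dots,L_\ell$. Correspondingly, the rational surgery cobordism $W$ decomposes as a composition of $2$-handle cobordisms, one per component of this link, and the homomorphism $F_{-W,\mathfrak{s}}$ factors through the cobordism maps for these individual $2$-handles.

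Each $(-1)$-contact surgery step admits a Stein cobordism, and for this Ozsv\'ath--Szab\'o's original naturality for Legendrian surgery identifies the induced cobordism map (with the canonical $\Spin^c$ structure determined by the rotation number of the stabilized push-off) as sending the post-surgery contact class to the pre-surgery contact class. The $(+1)$-contact surgery steps are more subtle, since such cobordisms are not Stein; they can be handled by introducing a cancelling handle pair to convert each $(+1)$-surgery into a stable problem, or by invoking the analogous naturality for positive contact surgery. The lens space summand $L(q,r)$ and the distinguished generator $\tilde{c}\in\HFa(L(q,r))$ appear because converting the rational framing $p/q$ into the integer framings used by DGS implicitly requires a $q/r$-surgery on a meridian of $L$; this cuts off a contact lens space whose contact invariant is precisely $\tilde{c}$, and the K\"unneth formula for $\HFa$ produces the tensor factor in the source of $F_{-W,\mathfrak{s}}$.

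Composing the identifications from all the surgery steps, together with the lens space contribution, yields part~(1). For part~(2), the $\Spin^c$ structure $\mathfrak{s}$ on $W$ is assembled handle-by-handle from the canonical $\Spin^c$ structures on each piece, each of which is pinned down by the rotation number of the corresponding Legendrian component after stabilizations. Evaluating $c_1(\mathfrak{s})$ against the capped-off Seifert surface $[\tilde{Z}]$ then reduces, after telescoping the continued-fraction data, to the formula $\pm\langle c_1(\mathfrak{s}),[\tilde{Z}]\rangle = p+(\rot(L)-\tb(L))q-1$, in which all intermediate rotation number contributions cancel.

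The main obstacle will be the Chern class computation: each stabilization prescribed by DGS shifts rotation numbers, and each continued-fraction coefficient $a_j$ dictates a different framing on the push-off, so one has to verify that all contributions except those coming from $\rot(L)$ and $\tb(L)$ telescope away. A secondary difficulty is the precise identification of $\tilde{c}$ as a \emph{specific} class in $\HFa(L(q,r))$ rather than merely some nonzero element; this amounts to choosing compatible contact structures on all the intermediate manifolds (including the lens space piece) and checking that the DGS handles are attached along knots whose framings are compatible with these choices, so that the factorization of $F_{-W,\mathfrak{s}}$ through the $2$-handle cobordism maps remains consistent at each stage.
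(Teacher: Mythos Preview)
This theorem is not proved in the present paper; it is quoted verbatim from Mark--Tosun \cite{MarkBulentNaturalityofContactinvariant} as background in Section~\ref{subsec: Naturality}. There is therefore no ``paper's own proof'' to compare against.

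That said, your sketch is broadly in the spirit of the Mark--Tosun argument, but it glosses over the two points that actually constitute the content of their theorem. First, you write that the rational surgery cobordism $W$ ``decomposes as a composition of $2$-handle cobordisms, one per component'' of the DGS link. This is not literally true: the DGS cobordism is built by attaching $e+\ell$ handles to $Y$, whereas $W$ is a \emph{single} $2$-handle attached to $Y\#{-L(q,r)}$. These cobordisms have different incoming boundaries and are not the same $4$-manifold; relating them requires a Kirby-calculus argument (blowing down the chain of push-offs to a single rationally-framed handle plus a lens space summand), and one must track both the $\Spin^c$ structure and the homology class $[\tilde Z]$ through that process. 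Second, the naturality of the contact invariant under contact $(+1)$-surgery is exactly the hard part --- this is what Mark--Tosun establish, building on Baldwin's capping-off map and Ozsv\'ath--Szab\'o's large-surgery identification. Saying it ``can be handled by introducing a cancelling handle pair \dots\ or by invoking the analogous naturality for positive contact surgery'' is circular: the latter \emph{is} the theorem being proved, and the former is a nontrivial argument that has to be supplied. Your telescoping claim for the Chern-class formula is plausible but would also need to survive the Kirby moves above, not just the DGS handle attachments.
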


We will analyse the map $F_{-W,\mathfrak{s}}$ in the above theorem to see when this map is injective. More specifically, we will study the corresponding map using the mapping cone formula given by the following proposition. 

\begin{proposition}[{\cite[Corollary 1.5]{MarkBulentNaturalityofContactinvariant}}] \label{mapping cone for contact invariant}
    Let $L$ be an oriented Legendrian knot in a contact integer homology sphere $(Y,\xi)$, fix $0< x/y\in \mathbb{Q}$ to be a contact surgery coefficient corresponding to smooth surgery coefficient $p/q=\tb(L)+x/y$. 
    Let $W:Y\#{-L(q,r)} \rightarrow Y_{p/q}(L)$ be the corresponding rational surgery cobordism, where $p=mq-r$.
    Then the contact invariant $c(\xi_{x/y}(L))\in \HFa(-Y_{p/q}(L))$ is equal (up to conjugation of the $\Spin^c$ structure on the cobordism) to the image of $c(\xi)$ in homology of the map given by inclusion 
    $$(t,\hat{B}_s)\hookrightarrow \Xa_{-p/q}(-Y,L),$$
    where we \edit{view $c(\xi)$ as an element in $(t,\hat{B}_s)$ under the identification between $(t,\hat{B}_s)$ and $\CFa(-Y)$}, and $t$ satisfies $$2t=(\rot(L)-\tb(L)+1)q-2.$$
\end{proposition}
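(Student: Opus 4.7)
The plan is to derive this proposition from the naturality statement in Theorem \ref{naturality of contact invariant} together with the explicit description of the cobordism maps built into the proof of the rational surgery mapping cone formula. By Theorem \ref{naturality of contact invariant}(1), there exists a $\Spin^c$ structure $\mathfrak{s}$ on $W$ and a generator $\tilde{c} \in \HFa(L(q,r), \mathfrak{s}|_{L(q,r)})$ such that
\[
c(\xi_{x/y}(L)) = F_{-W,\mathfrak{s}}\bigl(c(\xi) \otimes \tilde{c}\bigr).
\]
Hence it suffices to identify $F_{-W,\mathfrak{s}}$, after the mapping cone identification $\CFa(-Y_{p/q}(L)) \simeq \Xa_{-p/q}(-Y,L)$, with inclusion of an appropriate $(t,\hat{B}_s)$ summand, and then to pin down the index $t$.

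First I would recall that the orientation-reversed rational surgery cobordism, from $-Y \# L(q,r)$ to $-Y_{p/q}(L)$, is a single two-handle attachment, and observe that $\CFa(-Y \# L(q,r), \mathfrak{s}|_{-Y\# L(q,r)}) \cong \CFa(-Y) \otimes \langle \tilde{c}\rangle$ canonically, since $Y$ is an integer homology sphere and $\tilde{c}$ is a generator. In the proof of the rational mapping cone formula by Ozsv\'{a}th--Szab\'{o}, each summand $(t,\hat{B}_s)$ of $\Xa_{-p/q}(-Y,L)$ is canonically identified with such a copy of $\CFa(-Y)$, and the two-handle cobordism map associated with a specific $\Spin^c$ extension $\mathfrak{s}_t$ across $W$ is precisely the inclusion of that summand into the mapping cone. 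Combining this identification with the displayed equation above realizes $c(\xi_{x/y}(L))$ as the image of $c(\xi)$ under the inclusion $(t,\hat{B}_s) \hookrightarrow \Xa_{-p/q}(-Y,L)$ for some $t$.

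To pin down the index $t,$ I would invoke part (2) of Theorem \ref{naturality of contact invariant}: the $\Spin^c$ structure $\mathfrak{s}$ on $W$ is characterized up to conjugation by
\[
\pm \bigl\langle c_1(\mathfrak{s}), [\tilde{Z}] \bigr\rangle = p + \bigl(\rot(L) - \tb(L)\bigr)q - 1.
\]
In the mapping cone parametrization, the correspondence between the $(t,\hat{B}_s)$-index and $\bigl\langle c_1(\mathfrak{s}_t), [\tilde{Z}]\bigr\rangle$ is affine linear; a direct computation, using that $[\tilde{Z}]$ is realized by capping off a Seifert surface for $L$ with $q$ parallel copies of the two-handle core, shows that this relation takes the form $\bigl\langle c_1(\mathfrak{s}_t),[\tilde{Z}]\bigr\rangle = 2t + (p-q+1)$. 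Substituting and solving yields $2t = (\rot(L) - \tb(L) + 1)q - 2$, with the $\Spin^c$-conjugation ambiguity absorbing the $\pm$ sign.

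The main obstacle is the second step: explicitly matching the cobordism map $F_{-W,\mathfrak{s}}$ with inclusion of a single $(t,\hat{B}_s)$ in the rational mapping cone. While this identification is essentially built into the construction of the rational surgery mapping cone, tracking the generator $\tilde{c}$ on the $L(q,r)$ factor through the various chain-level identifications, and carefully handling the sign conventions coming from the orientation reversal $-W$, the $\Spin^c$-conjugation ambiguity, and the choice of continued fraction expansion of $-p/q$, all require care.
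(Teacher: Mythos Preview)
The paper does not give its own proof of this proposition: it is quoted as \cite[Corollary 1.5]{MarkBulentNaturalityofContactinvariant}, and the only addition is the remark immediately following it, observing that Mark--Tosun's argument, stated for $(S^3,\xi_{std})$, extends to integer homology spheres (and more generally to null-homologous knots in rational homology spheres). So there is nothing in the present paper to compare your argument against line by line.

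That said, your outline is exactly the shape of the argument one expects from Mark--Tosun: combine Theorem~\ref{naturality of contact invariant} with the fact, built into Ozsv\'ath--Szab\'o's proof of the rational mapping cone formula, that the two-handle cobordism map in a fixed $\Spin^c$ structure is modeled by the inclusion of a single $(t,\hat B_s)$ into $\Xa_{-p/q}(-Y,L)$, and then read off $t$ from the Chern class formula in part~(2). The one place where your write-up is thin is the asserted relation $\langle c_1(\mathfrak{s}_t),[\tilde Z]\rangle = 2t + (p-q+1)$: this is the content that actually needs to be extracted from the mapping cone construction (together with keeping track of orientation reversal and the $\Spin^c$ labeling conventions for rational surgeries), and you have simply asserted it. If you want a self-contained proof rather than a citation, that computation is where the work lies; otherwise, citing Mark--Tosun as the paper does is the cleaner option.
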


\begin{remark}
    Mark-Tosun only state the above proposition for $(S^3,\edit{\xi_{std}})$, but their proofs extend naturally \edit{to} the situation when $Y$ is an integer homology sphere, and even for null-homologous \edit{knots} in rational homology sphere.
\end{remark}

Note that in both Theorem \ref{naturality of contact invariant} and Proposition \ref{mapping cone for contact invariant} the $\Spin^c$ structures are only determined up to conjugation. However, such sign (conjugation) ambiguity can be removed, given that we are doing positive integer contact surgery on (rationally) null-homologous Legendrian knot, \edit{and that} both $Y$ and the manifold after surgery $Y'$ are rational homology spheres. \edit{This is due to the following technical result, which} will end up playing an important role in the proof of Theorems \ref{negative rational contact surgery on twist knots} and \ref{negative rational contact surgery on two bridge knots}.

\begin{theorem}[{\cite[Theorem 6.3]{WanNaturalityofLOSSinvariant}}] \label{thm: same Spin^c for Legendrian with classical invariants}
    Let $L$ be an oriented rationally null-homologous Legendrian knot in a contact rational homology sphere $(Y,\xi)$ with non-vanishing contact invariant $c(\xi)$. Let $0<n \in \mathbb{Z}$ be the contact surgery coefficient, $Y'$ be the manifold after contact \edit{$(+n)$-surgery} on $L$, and let $W:Y \rightarrow Y'$ be the corresponding  surgery cobordism, and consider $\xi_{n}^-(L)$ on $Y'$. There exist a $\Spin^c$ structure $\mathfrak{s}$ on $W$ such that the homomorphism 
        
        $$F_{-W,\mathfrak{s}}: \HFa(-Y) \rightarrow \HFa(-Y')$$ 
        induced by $W$ with its orientation reversed satisfies 
        $$F_{-W,\mathfrak{s}}(c(\xi))= c(\xi_n^-(L)).$$
Moreover, if $Y'$ is also a rational homology sphere. Then the $\Spin^c $ structure $\mathfrak{s}$ has the property that $$ \langle c_1(\mathfrak{s}),[\Tilde{F}] \rangle = 
      y(\edit{\rot}_\mathbb{Q}(L)+n-1) $$
 where $y$ is the order of $[L]$, $F$ is a rational Seifert surface for $L$ and $\Tilde{F}$ is the ``capped off" surface of $F$.
\end{theorem}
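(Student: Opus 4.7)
The plan is to refine Mark--Tosun's Theorem 3.5 by pinning down the $\Spin^c$ structure in its statement, not just up to conjugation. Because $W$ is a single 2-handle cobordism between rational homology spheres, $H^2(W;\Z)\cong\Z$, and a $\Spin^c$ structure $\mathfrak{s}$ on $W$ is determined by the pair of boundary restrictions $(\mathfrak{s}|_{-Y},\mathfrak{s}|_{-Y'})$ together with $\langle c_1(\mathfrak{s}),[\tilde F]\rangle$. Theorem 3.5 already produces $\mathfrak{s}$ with $F_{-W,\mathfrak{s}}(c(\xi))=c(\xi_n^-(L))$ and $|\langle c_1(\mathfrak{s}),[\tilde F]\rangle|=y(\rot_\Q(L)+n-1)$, and the restrictions to $-Y,-Y'$ are forced to be the $\Spin^c$ structures associated to $\xi$ and $\xi_n^-(L)$ by the non-vanishing of the contact invariants in those summands. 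The task therefore reduces to fixing the sign of the pairing.

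To fix the sign, I would factor the contact $(+n)$-surgery via the DGS algorithm (Theorem 3.4). Under the all-negative-stabilization convention, contact $(+n)$-surgery on $L$ decomposes as a contact $(+1)$-surgery on $L$ followed by $n-1$ Legendrian $(-1)$-surgeries on parallel push-offs of a single negative stabilization of $L$. Each Legendrian $(-1)$-surgery cobordism is Stein, and the canonical Stein $\Spin^c$ structure on it makes the naturality of the contact invariant hold with no conjugation ambiguity (by Plamenevskaya); its first Chern class pairs with the core disk as the rotation number of the corresponding Legendrian. For the remaining contact $(+1)$-surgery piece, the handle-cancellation identity that contact $(+1)$-surgery on $L$ composed with contact $(-1)$-surgery on a Legendrian push-off of $L$ is trivial, combined with Plamenevskaya's naturality on the $(-1)$ side, identifies a unique $\Spin^c$ structure (and hence unique sign) on this piece for which the cobordism map sends $c(\xi)$ to $c(\xi_1^-(L))$.

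Splicing these unambiguously determined $\Spin^c$ structures across the multi-handle DGS cobordism $W'$, and tracking how $W'$ recombines into the single 2-handle $W$ via standard blow-up/blow-down relations, determines $\mathfrak{s}$ on $W$ without conjugation ambiguity. Additivity of the first Chern class across the composition, together with the blow-up correction term, yields
\[
\langle c_1(\mathfrak{s}),[\tilde F]\rangle=y(\rot_\Q(L)+n-1),
\]
with the sign positive, as claimed.

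The expected main obstacle is the contact $(+1)$-surgery piece together with the bookkeeping of sign conventions. Plamenevskaya's naturality directly treats only Legendrian ($(-1)$-)surgery, so the $(+1)$-piece must be reduced via the handle-cancellation identity above (or, alternatively, handled through a direct open-book/Heegaard diagram computation adapted to an open book compatible with $\xi_1^-(L)$). Tracking the rotation number contributions from each DGS component, and then passing between the DGS cobordism $W'$ and the single 2-handle cobordism $W$ with the correct blow-up corrections, is where the delicate technical work lies.
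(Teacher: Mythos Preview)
This theorem is not proved in the present paper: it is quoted verbatim from \cite[Theorem 6.3]{WanNaturalityofLOSSinvariant} and used as a black box. Consequently there is no proof here to compare your proposal against.

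That said, your outline is a plausible strategy and is in the spirit of how such refinements are typically obtained: decompose via the DGS algorithm, use the unambiguous naturality on the Stein $(-1)$ pieces, and resolve the $(+1)$ piece by the cancellation trick. You have correctly identified the genuine content, namely removing the conjugation ambiguity on the $(+1)$ handle and then reassembling the $\Spin^c$ structure on the single $2$-handle cobordism $W$ from the multi-handle DGS cobordism. Whether this matches Wan's actual argument in \cite{WanNaturalityofLOSSinvariant} cannot be determined from the present paper; you would need to consult that reference directly.
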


We also have the parallel naturality theorem for the LOSS invariant which  will also be used later.

\begin{theorem}[{\cite[Theorem 1.1]{OzsvathStipsiczContactsurgeryandtransverseinvariant}}]{\label{Naturality of LOSS}}
Let $L,S \in (Y, \xi)$ be two disjoint oriented Legendrian knots in the contact 3-manifold $(Y, \xi)$ with $L$ null-homologous. Let $(Y',\xi_1)$ denote the 3-manifold with the associated contact structure obtained by performing contact $(+1)$-surgery along $S$, and let $L'$ denote the oriented Legendrian knot which is the image of $L$ in $(Y',\xi_1)$. Moreover suppose that $L'$ is null-homologous in $Y'$. Let $W$ be the 2-handle cobordism from $Y$ to $Y'$ induced by the surgery, and let
\begin{equation*}
    F_{-W,\mathfrak{s}}: \HFKm (-Y,L) \rightarrow \HFKm(-Y',L')
\end{equation*}
be the homomorphism in knot Floer homology induced by $-W$, the cobordism with reversed orientation, for  $\mathfrak{s}$ a $\Spin^c$ structure on $-W$. Then

\begin{enumerate}
    \item \label{it: naturality1}  there is a unique choice of \s \ for which 
\begin{equation*}
    F_{-W,\mathfrak{s}}(\mathfrak{L}(L))=\mathfrak{L}(L')
\end{equation*}
holds, and for any other $\Spin^c$ structure \s \  the map $F_{-W,\mathfrak{s}}$ is trivial on $\mathfrak{L}(L)$. 

\item\label{it: naturality2}  \cite[Proposition 1.4]{WanNaturalityofLOSSinvariant} If $S$ is null-homologous and both $Y$ and $Y'$ are rational homology \edit{spheres} then $\mathfrak{s}$ has the property that $$ \langle c_1(\mathfrak{s}),[\Tilde{Z}] \rangle =\rot(S)$$
where $Z$ is a Seifert surface for $S$ and $\Tilde{Z}$ is the result of capping off $Z$ with the core of the handle in $W$.
\end{enumerate}

\end{theorem}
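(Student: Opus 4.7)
The plan is to reduce the theorem to the analogous naturality result for the Heegaard Floer contact invariant under contact $(+1)$-surgery by using an open book decomposition that is adapted to both $L$ and $S$ simultaneously. First, after stabilizing if necessary, I would find an open book decomposition $(B, \pi)$ of $Y$ compatible with $\xi$ such that $L$ and $S$ both sit as homologically nontrivial curves on a single page $\Sigma$. With this setup, $\mathfrak{L}(L) \in \HFKm(-Y, L)$ is represented by a distinguished generator $\mathbf{x}_0$ in the Heegaard diagram associated to the open book via the Honda-Kazez-Mati\'c / Lisca-Ozsv\'ath-Stipsicz-Szab\'o construction, with the extra basepoint placed so as to record $L$.

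Contact $(+1)$-surgery on $S$ corresponds to modifying the monodromy of the open book by a left-handed Dehn twist about $S$, which at the level of Heegaard diagrams amounts to replacing the $\boldsymbol{\beta}$-curves meeting $S$ by their images under this twist, yielding a new collection $\boldsymbol{\gamma}$. I would organize this into a Heegaard triple $(\Sigma, \boldsymbol{\alpha}, \boldsymbol{\beta}, \boldsymbol{\gamma})$, so that the reversed $2$-handle cobordism $-W$ induces a triangle-counting map on $\HFKm$. To prove part (1), I would identify a canonical small triangle connecting $\mathbf{x}_0$ to the LOSS generator $\mathbf{x}_0'$ for $L'$ in the post-surgery diagram, sitting in a unique $\Spin^c$ class $\mathfrak{s}$. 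An area/energy estimate, modeled on Ozsv\'ath-Szab\'o's proof for the contact invariant, then excludes contributions from all other $\Spin^c$ structures, giving both existence and uniqueness.

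For part (2), under the additional hypotheses that $S$ is null-homologous and both $Y$ and $Y'$ are rational homology spheres, I would compute $\langle c_1(\mathfrak{s}), [\Tilde{Z}] \rangle$ directly. The surface $\Tilde{Z}$ is obtained by capping off the Seifert surface $Z$ of $S$ with the core disk of the $2$-handle attached along $S$ in $W$. The $\Spin^c$ structure $\mathfrak{s}$ is characterized by having its restrictions to $Y$ and $Y'$ equal the canonical $\Spin^c$ structures of $\xi$ and $\xi_1$ respectively; the Gompf-style formula that relates the first Chern class of such a $\Spin^c$ structure on a Legendrian $2$-handle cobordism to the rotation number of the attaching circle then yields $\langle c_1(\mathfrak{s}), [\Tilde{Z}] \rangle = \rot(S)$.

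The main obstacle will be establishing uniqueness in part (1): one must show that only the canonical small triangle contributes to the image of $\mathfrak{L}(L)$, ruling out cancellations or contributions from other $\Spin^c$ strata. This requires careful use of positivity of intersections in the triangle count and an area filtration argument analogous to the one Ozsv\'ath-Szab\'o employ in their $(+1)$-surgery naturality proof for the contact invariant, suitably adapted to the presence of the extra basepoint recording the Legendrian knot $L$.
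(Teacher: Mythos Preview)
The paper does not prove this theorem; it merely quotes it as a known result, with part~(1) attributed to Ozsv\'ath--Stipsicz and part~(2) to Wan. So there is no ``paper's own proof'' to compare your proposal against. That said, your sketch does follow the strategy of the original references: placing $L$ and $S$ on a common page of an adapted open book, realizing contact $(+1)$-surgery as a left-handed Dehn twist, and identifying the image of the LOSS generator via a canonical small triangle in the resulting Heegaard triple is exactly the Ozsv\'ath--Stipsicz argument, and the filtration/area argument you allude to for uniqueness is the right mechanism. For part~(2), invoking a Gompf-type rotation-number formula is also what Wan does, though you should be aware that the actual argument there requires some care with orientations (the map is on $-W$, not $W$) and with pinning down that the distinguished $\Spin^c$ structure really is the one extending the canonical $\Spin^c$ structures of $\xi$ and $\xi_1$; this last identification is not automatic from the triangle count alone and is where the rational homology sphere hypotheses are used.
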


\section{A filtered mapping cone computation}\label{sec: compute}
In this section we perform the computations used in the proofs of the main theorems.

The knot $E_n$ has the two-bridge notation $[-2,n+1]$, which can be viewed as the numerical closure of the rational tangle $-2 + \frac{1}{n+1} = -\frac{2n+1}{n+1}.$ We will be interested in the (knot Floer \edit{homology of the}) mirror $-E_n,$  which has the rational tangle number $\frac{2n+1}{n+1}.$ For two-bridge knots, \edit{the signature,}  Alexander polynomials and $\HFKa$ \edit{can be computed explicitly from} the rational tangle number (see for example \cite{uber, twobridge}). \edit{In particular $\sigma(E_n) = -2.$} Together with the classification theorem for thin knots \cite{OSalter, inathin}, we compute
\begin{align*}
    \CFKi(-E_n) \cong C \oplus \big( \bigoplus_{i=1}^\frac{n-1}{2} D_i \big)
\end{align*}
where $C$ is isomorphic to $\CFKi(-T_{2,3})$ and each $D_i$ is a length-one box summand (compare with the homology $\HFKm(-E_n)$ and $\HFKa(-E_n)$ calculated in \cite{OzsvathStipsiczContactsurgeryandtransverseinvariant}). We pick the basis as follows. Let $x, y, z$ be the generators of $C$ over $\F[U],$ such that in the $(i,j)$-coordinate of $\CFKi,$ they have \edit{coordinates} $(0,1), (0,0)$ and $(1,0)$, with \edit{Maslov} grading $2,1$ and $2$ respectively. The differentials are given by 
\begin{align*}
    \d x = \d y =z.
\end{align*}
Each $D_i$ is isomorphic to $D,$ generated by $a, b, c$ and $d$ over $\F[U],$ with \edit{coordinates} $(1,1),(0,1),(1,0)$ and $(0,0)$ and \edit{Maslov} grading $3,2,2$ and $1$ respectively. \edit{The} differentials are given by 
\begin{align*}
    \d a &= b + c \\
    \d b &= \d c =d.
\end{align*}

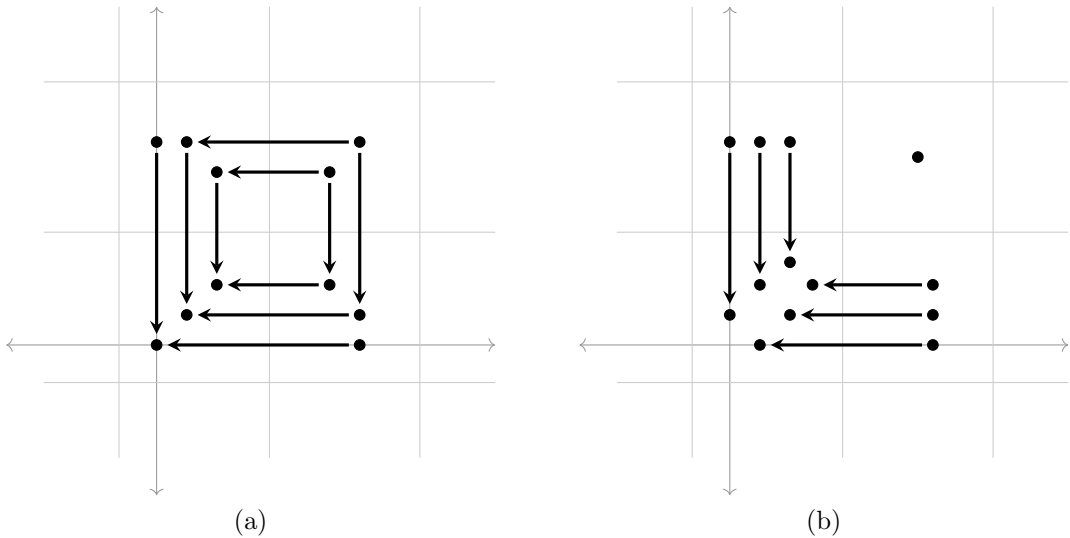
\begin{figure}[htb!]
\centering
  \begin{minipage}{0.5\linewidth}
\centering
\subfloat[]{
\begin{tikzpicture}
\begin{scope}[thin, black!40!white]
		\draw [<->] (-1.5, 0.5) -- (5, 0.5);
		\draw [<->] (0.5, -1.5) -- (0.5, 5);
	\end{scope}
\begin{scope}[thin, black!20!white]
    \foreach \x in {0, 2, 4}
         { \draw   (\x, -1) -- (\x, 5);
         \draw  (-1, \x) -- (5,\x);}
\end{scope}
    \filldraw (2.8, 2.8) circle (2pt) node[] (a){};
    \filldraw (2.8, 1.3) circle (2pt) node[] (b){};
    \filldraw (1.3, 2.8) circle (2pt) node[] (c){};
    \filldraw (1.3, 1.3) circle (2pt) node[] (d){};
    \draw [very thick, -stealth] (a) -- (b);
    \draw [very thick, -stealth] (a) -- (c);
    \draw [very thick, -stealth] (c) -- (d);
    \draw [very thick, -stealth] (b) -- (d);

     \filldraw (3.2, 3.2) circle (2pt) node[] (a1){};
    \filldraw (3.2, 0.9) circle (2pt) node[] (b1){};
    \filldraw (0.9, 3.2) circle (2pt) node[] (c1){};
    \filldraw (0.9, 0.9) circle (2pt) node[] (d1){};
    \draw [very thick, -stealth] (a1) -- (b1);
    \draw [very thick, -stealth] (a1) -- (c1);
    \draw [very thick, -stealth] (c1) -- (d1);
    \draw [very thick, -stealth] (b1) -- (d1);

    \filldraw (3.2, 0.5) circle (2pt) node[] (b11){};
    \filldraw (0.5, 3.2) circle (2pt) node[] (c11){};
    \filldraw (0.5, 0.5) circle (2pt) node[] (d11){};

    \draw [very thick, -stealth] (c11) -- (d11);
    \draw [very thick, -stealth] (b11) -- (d11);
       \end{tikzpicture}
}
\end{minipage}%
  \begin{minipage}{0.5\linewidth}
\centering
\subfloat[]{
\begin{tikzpicture}
\begin{scope}[thin, black!40!white]
		\draw [<->] (-1.5, 0.5) -- (5, 0.5);
		\draw [<->] (0.5, -1.5) -- (0.5, 5);
	\end{scope}
\begin{scope}[thin, black!20!white]
    \foreach \x in {0, 2, 4}
         { \draw   (\x, -1) -- (\x, 5);
         \draw  (-1, \x) -- (5,\x);}
\end{scope}
  
    \filldraw (3.2, 1.3) circle (2pt) node[] (b){};
    \filldraw (1.3, 3.2) circle (2pt) node[] (c){};
    \filldraw (1.3, 1.6) circle (2pt) node[] (d){};
    \filldraw (1.6, 1.3) circle (2pt) node[] (e){};

    \draw [very thick, -stealth] (c) -- (d);
    \draw [very thick, -stealth] (b) -- (e);
     \filldraw (0.45, 0.45) circle (2pt) node[] (a1){};
    \filldraw (3.2, 0.9) circle (2pt) node[] (b1){};
    \filldraw (0.9, 3.2) circle (2pt) node[] (c1){};
    \filldraw (0.9, 1.3) circle (2pt) node[] (d1){};
 \filldraw (1.3, 0.9) circle (2pt) node[] (e1){};
 
    \draw [very thick, -stealth] (c1) -- (d1);
    \draw [very thick, -stealth] (b1) -- (e1);

    \filldraw (3.2, 0.5) circle (2pt) node[] (b11){};
    \filldraw (0.5, 3.2) circle (2pt) node[] (c11){};
    \filldraw (0.5, 0.9) circle (2pt) node[] (d11){};
       \filldraw (0.9, 0.5) circle (2pt) node[] (e11){};

    \draw [very thick, -stealth] (c11) -- (d11);
    \draw [very thick, -stealth] (b11) -- (e11);
       \end{tikzpicture}
}
\end{minipage}
    \caption{On the left, the knot Floer complex of $-E_5=7_2.$ On the right, the knot Floer complex of its dual knot, $\CFKi(S^3_{+1}(-E_5),\mu).$ }
    \label{fig:knotfloercomplex}
\end{figure} 

\begin{proposition} \label{prop:dualEn} The dual knot complex of $-E_n$ is given by
    \begin{align*} 
        \CFKi(S^3_{+1}(-E_n),\mu) \cong O \oplus \Big(\bigoplus^{\frac{n+1}{2}}_{i=1} H_i \Big) \oplus \Big(\bigoplus^{\frac{n+1}{2}}_{i=1} V_i \Big)  
    \end{align*} 
    where $O$ is the complex with a single generator, supported in \edit{coordinates} $(0,0)$. Each  $H_i$ is generated by $x^h_i, y^h_i$ in \edit{coordinates} $(0,0)$ and $(1,0)$ respectively, with  differential  $\d y^h_i = x^h_i.$ Each  $H_i$ is generated by $x^v_i, y^v_i$ in \edit{coordinates} $(0,0)$ and $(0,1)$ respectively, with differential  $\d y^v_i = x^v_i.$ 
\end{proposition}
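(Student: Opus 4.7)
The plan is to apply the dual knot surgery formula of Hedden--Levine from Section \ref{subsec: dualknot} with $n=1$ to the knot $K = -E_n$, exploiting the splitting
$$\CFKi(-E_n) \cong C \oplus \bigoplus_{i=1}^{(n-1)/2} D_i$$
recorded above. Since $S^3$ is an L-space, the flip map on $\CFKi(-E_n)$ may be taken to be the reflection across the diagonal $i=j$. This choice preserves each of the summands $C$ (swapping $x \leftrightarrow z$ and fixing $y$) and $D_i$ (swapping $b \leftrightarrow c$ and fixing $a$ and $d$). Consequently $v^\infty_s$ and $h^\infty_s$ both respect the direct sum decomposition, and the full mapping cone splits as
$$\X^\infty_K\bigl(\CFKi(-E_n)\bigr) \;\simeq\; \X^\infty_K(C) \;\oplus\; \bigoplus_{i=1}^{(n-1)/2} \X^\infty_K(D_i).$$

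Next I would compute $\X^\infty_K(C)$ and $\X^\infty_K(D)$ separately. For each summand, I would list the relevant $A_s$ and $B_s$ copies (with $s$ in the range dictated by $g(E_n)=1$), tabulate the values of $v^\infty_s$ and $h^\infty_s$ on the named generators $x,y,z$ or $a,b,c,d$, and then perform Gaussian elimination to cancel the acyclic pieces of the cone. I expect $\X^\infty_K(C)$ to reduce to a single tower summand $O$ together with one horizontal box $H$ and one vertical box $V$, while each $\X^\infty_K(D_i)$ reduces to one $H$ plus one $V$. Summing these contributions yields one copy of $O$ and $1 + \tfrac{n-1}{2} = \tfrac{n+1}{2}$ copies each of $H$ and $V$, as claimed.

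Finally I would read off the precise $(i,j)$-coordinates of each surviving generator using the definitions of $\II$ and $\JJ$ in formulas \eqref{eq: filtration_s3_1}--\eqref{eq: filtration_s3_4}, specialized to $n=1$. This confirms that in every box summand the two generators sit at $\{(0,0),(1,0)\}$ or $\{(0,0),(0,1)\}$ as stated, and that the surviving generator of $O$ sits at $(0,0)$.

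The main obstacle is the filtered bookkeeping: the full cone has many generators spread across the different $A_s$ and $B_s$ copies, and one must carry both the $\II$- and $\JJ$-filtrations through every Gaussian elimination, verifying both that each cancellation is actually filtered and that the resulting simplified complex contains no stray tower and no merged or mis-oriented box. No deep ingredient is needed beyond this concrete computation, but careful accounting is essential to land on exactly the claimed normal form.
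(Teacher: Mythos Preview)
Your proposal is correct and follows essentially the same route as the paper: split $\CFKi(-E_n)$ into $C$ and the $D_i$'s, compute the Hedden--Levine cone on each summand separately (the paper writes $A_0\oplus A_1\to B_1$ explicitly and cancels acyclic pairs just as you plan), and read off that $C$ contributes $O\oplus H\oplus V$ while each $D_i$ contributes $H\oplus V$. One small caution: you use $n$ both for the surgery coefficient and for the subscript of $E_n$; the paper avoids this by writing the surgery coefficient as $\lambda=1$.
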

\begin{proof}
We first compute 
 $\X^{\infty}_K(C)$ with surgery coefficient \edit{$\lambda=1$}, which by \eqref{eq:x_infinity2} is the mapping cone of
 \begin{align} 
      A_0(C) \oplus A_1(C) \xrightarrow{v_1+h_0} B_1(C).
\end{align}
Recall that $A_0(C), A_1(C)$ and $B_1(C)$ are each isomorphic to a copy of $C.$   We denote the generators in $A_s(C)$ by $x^{(s)}, y^{(s)}, z^{(s)}$ for $s=0,1$ and the generators in $B_1(C)$ by  $x', y', z'.$ \edit{We temporarily denote the inner differentials of $A_s$ and $B_s$ by $\d^0$, in order to distinguish them from the mapping cone differentials, which we denote by $\d$. We have  $  \d = \d^0 +  h_0 $ on $A_0$, $  \d = \d^0 +  v_1 $ on $A_1$ and $  \d = \d^0 $ on $B_1$,   where 
}
\begin{align*}
    v_1(x^{(1)}) = x'   \qquad v_1(y^{(1)}) &= y' \qquad v_1(z^{(1)}) = z' \\
     h_0(x^{(0)}) = y'   \qquad h_0(y^{(0)}) &= x' \qquad h_0(z^{(0)}) = z'.
\end{align*}
Note that in the $(\II,\JJ)$-filtration defined by \eqref{eq: filtration_s3_1}, \eqref{eq: filtration_s3_2}, \eqref{eq: filtration_s3_3} and \eqref{eq: filtration_s3_4}, $x'$ and $z'$ are in the same $(\II,\JJ)$-coordinate. Therefore quotienting out \edit{the $\F[U,U^{-1}]$-submodule spanned by $\{ x', \d x' \}$} yields a chain homotopy equivalent complex. Similarly we can quotient out \edit{the $\F[U,U^{-1}]$-submodule spanned by $\{ x^{(0)}, \d x^{(0)}  \}$}. The resulting complex is generated by $\{ y^{(0)}, \edit{z^{(0)} (=y'),  x^{(1)}, y^{(1)}}, z^{(1)} \}$ over $\F[U,\edit{U^{-1}}].$ We can further simplify the complex by a filtered change of basis to $\{ y^{(0)}, z^{(0)},\edit{y^{(0)} + x^{(1)} + y^{(1)}}, x^{(1)}, z^{(1)} \}$, with $(\II,\JJ)$-filtration $(1,0), (0,0), (1,1), (0,1),(0,0)$ respectively, \edit{where the nontrivial differentials are given by}
\begin{align*}
    \d y^{(0)} &= z^{(0)} \\
    \d x^{(1)} &= z^{(1)}.
\end{align*}
\edit{Therefore $\{ y^{(0)}, z^{(0)}\}, \{x^{(1)}, z^{(1)} \}$ and $\{ y^{(0)} + x^{(1)} + y^{(1)}\}$ each generates a summand, say    $H_{\frac{n+1}{2}}$, $V_{\frac{n+1}{2}}$   and $O$, respectively. Note that we can also choose the generator of the complex $O$ to be $U(y^{(0)} + x^{(1)} + y^{(1)})$, which has $(\II,\JJ)$-coordinate $(0,0)$.}

Next we compute 
 $\X^{\infty}_K(D)$ for $D \cong D_i, 1\leq i \leq \frac{n-1}{2}.$ By \eqref{eq:x_infinity2} this is the mapping cone of
 \begin{align} 
      A_0(D) \oplus A_1(D) \xrightarrow{v_1+h_0} B_1(D).
\end{align}
 We adopt \edit{a similar notation} and denote the generators in $A_s(D)$ by $a^{(s)}, b^{(s)}, c^{(s)}, d^{(s)}$ for $s=0,1$ and the generators in $B_1(D)$ by  $a', b', c', d'.$ Note that $a', c'$ and $b',d'$ are pairwise in the same $(\II,\JJ)$-coordinate, and therefore the mapping cone is homotopy equivalent to $ A_0(D) \oplus A_1(D)$. We can further quotient out $\{a^{(0)} \d a^{(0)} \}$ and $\{a^{(1)} \d a^{(1)} \}$. The resulting complex is generated by $\{c^{(0)},d^{(0)},b^{(1)},d^{(1)} \}$ with $(\II,\JJ)$-filtration $(1,0),(0,0),(0,1),(0,0)$ respectively and differentials 
 \begin{align*}
     \d c^{(0)} &= d^{(0)} \\
     \d b^{(1)} &= d^{(1)}.
 \end{align*}
 We have 
    \begin{align*}
        \CFKi(S^3_{+1}(-E_n),\mu) &\cong \X^{\infty}_K(\CFKi(S^3,-E_n))\\
        &\cong \X^{\infty}_K\left(C \oplus \Bigg(\bigoplus_{i=1}^\frac{n-1}{2} D_i \Bigg)\right)\\
        &= \X^{\infty}_K(C)\oplus \left(\bigoplus_{i=1}^\frac{n-1}{2} \X^{\infty}_K(D_i)\right).
    \end{align*}
The result follows from the previous computation. In particular, each $D_i$ contributes \edit{the summands} $V_i$ and $H_i$, and $C$ contributes \edit{the summands $O$, $V_{\frac{n+1}{2}}$ and $H_{\frac{n+1}{2}}$.}
\end{proof}
\begin{corollary}\label{cor: topinj} The map
    \begin{equation*}
     G: \HFKm ({S^3_{+1}}(-E_n),\mu) \rightarrow \HFa ({S^3_{+1}}(-E_n)), 
\end{equation*}
is injective in the top Alexander grading. 
\end{corollary}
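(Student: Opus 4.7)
The plan is to combine the explicit direct sum decomposition from Proposition \ref{prop:dualEn} with a careful analysis of the chain map $g : \CFKm \to \CFa$ obtained by setting $U=1$.

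First, I would identify $\HFKm$ in the top Alexander grading. By Proposition \ref{prop:dualEn}, the dual knot complex decomposes as $O \oplus \bigoplus H_i \oplus \bigoplus V_i$, and the only generators with top Alexander filtration $s=1$ are the $y^v_i \in V_i$ at coordinate $(0,1)$. Since the only differential involving $y^v_i$ is $\d y^v_i = x^v_i$, which strictly decreases the Alexander grading, the induced horizontal differential on $\CFKm_1$ vanishes. This yields $\HFKm_1 \cong \F^{(n+1)/2}$ with basis $\{[y^v_i]\}_{i=1}^{(n+1)/2}$.

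Next, I would trace each class $[y^v_i]$ through the map $G$. Using the direct sum structure and the mapping cone description of $\CFa(S^3_{+1}(-E_n))$, the image $G([y^v_i])$ can be matched with a specific nontrivial class in $\HFa$. The key observation is that each $V_i$ summand is paired with a corresponding $H_i$ summand via the underlying mapping cone $A_0 \oplus A_1 \to B_1$ appearing in the proof of Proposition \ref{prop:dualEn}, and this pairing naturally places the $(n+1)/2$ classes $[y^v_i]$ in bijection with the $(n+1)/2$ torsion-type contributions of the $H_i$'s to $\HFa$ inherited via the Ozsv\'ath--Szab\'o long exact sequence relating $\HFm$ and $\HFa$.

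Finally, since these $(n+1)/2$ images come from the $(n+1)/2$ independent direct summands identified above, they are linearly independent in $\HFa$. This yields injectivity of $G|_{\HFKm_1}$.

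The main obstacle is making the identification of $G([y^v_i])$ in $\HFa$ precise at the chain level, since $y^v_i$ is not itself a cycle in the full complex $\CFm$. Handling this requires careful use of the naturality properties recorded in Lemma \ref{G maps LOSS to Contact invariant} and Section \ref{subsec: Naturality}, together with the mapping cone structure, rather than reading off the images from the direct sum decomposition naively (which would suggest that $V_i$ contributes trivially to $\HFa$).
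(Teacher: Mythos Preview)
Your first step --- identifying $\HFKm$ in the top Alexander grading as $\F^{(n+1)/2}$ with basis $\{[y^v_i]\}$ --- is correct and is exactly what the paper does.

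The rest of your proposal, however, is both overcomplicated and rests on a misreading of the map $G$. The paper's proof is essentially one line after the first step: for $s<0$ the row $C\{i\leq 0,\, j=s\}$ is a model for $\CFa(S^3_{+1}(-E_n))$, the map $G$ is simply multiplication by $U^{1-s}$ into that row, and $U^{1-s}y^v_i$ is a nonzero homology class there because the only differential in $V_i$ is vertical and hence vanishes on any fixed row. Linear independence is immediate from the direct sum splitting.

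Your worry that ``$V_i$ contributes trivially to $\HFa$'' comes from implicitly using the \emph{column} model $C\{i=0\}$ for $\CFa$, where indeed $\d y^v_i = x^v_i$ kills both generators. But $g$ does not target that model: setting $U=1$ means passing to $\CFKm_s = C\{i\leq 0,\, j=s\}$ for $s\ll 0$, which is the \emph{row} model. In the row model each $V_i$ contributes $\F^2$ to $\HFa$ (both $U^{1-s}y^v_i$ and $U^{-s}x^v_i$ survive), the direct sum decomposition is respected by $G$, and the ``naive'' reading you dismiss is in fact the correct one.

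Consequently the machinery you invoke --- the mapping cone pairing of $V_i$ with $H_i$, the long exact sequence relating $\HFm$ and $\HFa$, and especially the naturality statements from Lemma~\ref{G maps LOSS to Contact invariant} and Section~\ref{subsec: Naturality} --- is unnecessary. The last of these concerns contact invariants and cobordism maps and has no bearing on this purely algebraic computation. Your remark that ``$y^v_i$ is not a cycle in the full complex $\CFm$'' is likewise a red herring: $G$ is induced on the homology of $\CFKm_s$ with its horizontal differential, where $y^v_i$ \emph{is} a cycle, and does not factor through $\HFm$.
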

\begin{proof}
    Using the notation from Proposition \ref{prop:dualEn}, the only generators supported in the top Alexander grading of $\HFKm ({S^3_{+1}}(-E_n),\mu)$ are \edit{$y^v_1, \dots, y^v_{\frac{n+1}{2}}$}. When $s<0,$  each
    $U^{1-s}y^v_i \in C\{i\leq 0, j=s\} \cong \CFa({S^3_{+1}}(-E_n))$ survives \edit{in} the homology. The result follows.
\end{proof}
\section{Proof of Theorem \ref{negative rational contact surgery on twist knots}} \label{sec:proof}
We will prove the theorem by considering different cases depending on the contact surgery \edit{coefficient} $r$. Recall that $\xi_r(L)$ denotes the contact structure obtained by contact $r$-surgery on the Legendrian knot $L$. 

\subsection{Case for $\pmb{r=-2}$} 

This is the most essential case and the starting point. 
\begin{theorem} \label{-2 contact surgery}
    Let $L_1$ and $L_2$ be two Legendrian representatives of $E_n$ with $n>3$ and odd that have same $\tb=1$ and $\rot=0$ but different LOSS invariants. Then contact \edit{$(-2)$-surgery} on $L_1$ and $L_2$ gives contact manifolds that have different contact invariants.
\end{theorem}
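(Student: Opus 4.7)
The plan is to use the DGS algorithm to reduce the problem to contact $(-1)$-surgery on the negative stabilization, then track the LOSS invariant of the induced Legendrian dual meridian, and finally invoke Corollary \ref{cor: topinj} to pass from LOSS to the contact invariant.

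First, by Theorem \ref{DGS negative ration contact surgery}, contact $(-2)$-surgery on $L_i$ is equivalent to contact $(-1)$-surgery on the single negative stabilization $L_i^-$ (the relevant continued fraction has $a_1=-3$, giving one stabilization). By Theorem \ref{LOSS invariant under negative stabilization}, $\mathfrak{L}(L_i^-) = \mathfrak{L}(L_i)$, so $L_1^-$ and $L_2^-$ still have distinct LOSS invariants in $\HFKm(-S^3, E_n)$. Write $M = S^3_{-1}(E_n)$ for the resulting smooth $3$-manifold and $\xi_i = \xi_{-1}(L_i^-)$; let $\mu_i \subset M$ denote the dual Legendrian meridian produced by the surgery. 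By Lemma \ref{G maps LOSS to Contact invariant}, $G(\mathfrak{L}(\mu_i)) = c(\xi_i) \in \HFa(-M) = \HFa(S^3_{+1}(-E_n))$, so it suffices to show that $\mathfrak{L}(\mu_1) \neq \mathfrak{L}(\mu_2)$ and that $G$ is injective on them.

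To distinguish the $\mathfrak{L}(\mu_i)$, I would exploit the canceling contact $(+1)$-surgery on $\mu_i$, which undoes the original $(-1)$-surgery and restores $(S^3, \xi_{std})$ with $L_i^-$ as the Legendrian dual. The corresponding 2-handle cobordism $W_i : M \to S^3$ should provide, for a specific $\Spin^c$ structure $\mathfrak{s}_i$ on $-W_i$, a cobordism map $F_{-W_i, \mathfrak{s}_i} : \HFKm(-M, \mu_i) \to \HFKm(-S^3, L_i^-)$ sending $\mathfrak{L}(\mu_i) \mapsto \mathfrak{L}(L_i^-)$. This is a variant of Theorem \ref{Naturality of LOSS} in which one tracks the surgered knot itself via its dual, and the strong invariance perspective of Section \ref{subsubsec:stronginv} is essential here, since we must compare actual elements of the same group $\HFKm(-M, \mu)$ rather than isomorphism classes. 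Since $\mathfrak{L}(L_1^-) \neq \mathfrak{L}(L_2^-)$, we conclude $\mathfrak{L}(\mu_1) \neq \mathfrak{L}(\mu_2)$.

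Finally, one verifies that $\mathfrak{L}(\mu_i)$ lies in the top Alexander grading of $\HFKm(S^3_{+1}(-E_n), \mu)$ by comparing the Alexander grading of the LOSS class (determined by the classical invariants of $\mu_i$) with the dual knot complex computed in Proposition \ref{prop:dualEn}, in which the top Alexander grading contains the summands $y^v_i$ found there. Corollary \ref{cor: topinj} then implies $G(\mathfrak{L}(\mu_1)) \neq G(\mathfrak{L}(\mu_2))$, i.e., $c(\xi_1) \neq c(\xi_2)$, completing the proof.

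The main obstacle I expect is the ``dual naturality'' step: making the cobordism map $F_{-W_i, \mathfrak{s}_i}$ precise and pinning down the unique $\Spin^c$ structure $\mathfrak{s}_i$ so that the image of $\mathfrak{L}(\mu_i)$ is exactly $\mathfrak{L}(L_i^-)$. Theorem \ref{Naturality of LOSS} as stated treats a Legendrian knot disjoint from the one being surgered, and adapting it (and its grading/$\Spin^c$ refinement from Theorem \ref{thm: same Spin^c for Legendrian with classical invariants}) to track the surgered knot through its dual in a $(-1)$-surgery will be the key technical input. Handling this correctly, together with the grading check for the LOSS invariant of $\mu_i$, is what makes Corollary \ref{cor: topinj} applicable.
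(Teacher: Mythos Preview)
Your outline matches the paper's approach. The obstacle you flag—the ``dual naturality'' step—is not handled in the paper by proving a variant of Theorem~\ref{Naturality of LOSS}, but by a device that lets the theorem apply verbatim. Instead of doing contact $(+1)$-surgery on the dual $\mu_i$ and trying to track $\mu_i$ through its own surgery, take \emph{two} Legendrian push-offs of $L_i^{-1}$ in $S^3$; call their images in $(Y_i,\xi_{-2}(L_i))$ by $P'_i$ and $P''_i$. These are disjoint, so Theorem~\ref{Naturality of LOSS} applies with $L=P'_i$ and $S=P''_i$. Since $P''_i$ is a push-off of the surgery curve, contact $(+1)$-surgery on it cancels the original $(-1)$-surgery and returns $(S^3,\xi_{std})$ with $P'_i$ sent to the push-off $P_i$ of $L_i^{-1}$. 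Part~\eqref{it: naturality2} of Theorem~\ref{Naturality of LOSS} (not Theorem~\ref{thm: same Spin^c for Legendrian with classical invariants}) identifies the $\Spin^c$ structure via $\rot(P''_i)$, which is the same for $i=1,2$; the naturality package of \cite{JTZnmh,Jcsf} then identifies the two cobordism maps as one. From there your argument runs as written: $\mathfrak{L}(P_i)=\mathfrak{L}(L_i^{-1})=\mathfrak{L}(L_i)$ are distinct, hence so are the $\mathfrak{L}(P'_i)$; the classical invariants $\tb(P'_i)=0$, $\rot(P'_i)=-1$ (computed via \cite[Lemma~6.6]{LOSS}) place $\mathfrak{L}(P'_i)$ in Alexander grading~$1$ by \cite[Theorem~1.6]{OzsvathStipsiczContactsurgeryandtransverseinvariant}; and Corollary~\ref{cor: topinj} finishes. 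The identification of $P'_i$ with the dual knot $\mu$ (needed to invoke Proposition~\ref{prop:dualEn} and Corollary~\ref{cor: topinj}) is \cite[Proposition~2]{DGh}.
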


The point of doing the contact $(-2)$-surgery is that smoothly we are doing $(-1)$-surgery, which makes the resulting manifold an integer homology sphere. Thus all knots will \edit{be} null-homologous. The \edit{idea of the proof of} Theorem \ref{-2 contact surgery} is \edit{to first show that} the LOSS invariants of the induced Legendrian push-offs of $L_1$ and $L_2$ are distinct and then use the fact that the map \eqref{LOSS to contact invariant} is injective on the top Alexander grading (therefore the LOSS invariants of $L_1$ and $L_2$ are  necessarily distinct as well). 

As we pointed out earlier, since we  only consider negative \edit{stabilizations}, contact $(-2)$-surgery on $L_i$ is the same as contact $(-1)$-surgery on $L_i^{-1}$, the once negatively-stabilized $L_i$ for $i =1,2$. Identifying  $\xi_{-2}(L_i)$ with $\xi_{-1}(L_i^{-1})$, in the following proof we denote  by $(Y_i, \xi_{-2}(L_i))$ the manifold with the associated contact structur
e obtained from contact \edit{$(-1)$-surgery} on $L_i^{-1}$. Smoothly   $Y_i \cong S^3_{-1}(E_n)$ for $i=1,2.$

\begin{proof} [Proof of Theorem \ref{-2 contact surgery}]
\begin{figure}[htb!]
\centering
\begin{tikzpicture}
\begin{scope}[thin, black!0!white]
          \draw  (-5, 0) -- (5, 0);
      \end{scope}
    \node at (-4,0){\includegraphics[scale=0.4]{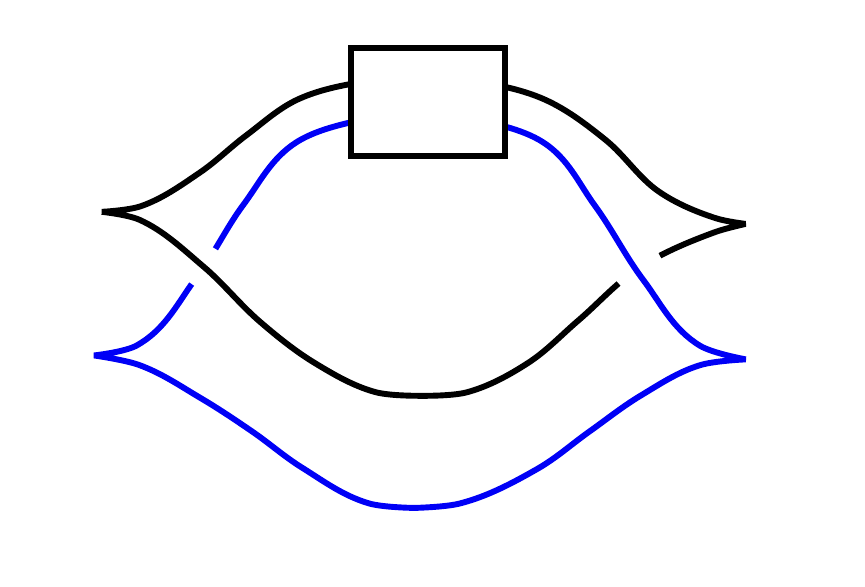}};
    \node at (0,1){\small $(-1)$-surgery};
    \node at (0,0.5){\small on $L_i^{-1}$};
    \node at (6,1){\small $-1$};
      \node at (-3.95,1.3){\small $L_i^{-1}$};
          \node at (4.05,1.3){\small $L_i^{-1}$};
        \draw  [very thick, -stealth]  (-1, 0) -- (1, 0);
         \node at (-1.75,-1)[text=blue]{\small $P_i$};
          \node at (6.1,-1)[text=blue]{\small $P'_i$};
       \node at (4,0){\includegraphics[scale=0.4]{Picture/thm41}};
\end{tikzpicture}
    \caption{Denoted by $L_i^{-1}$ is the once negatively-stabilized $L_i$ and $P_i$  its push-off. Then contact $(-1)$-surgery on $L_i^{-1}$ is the same as contact $(-2)$-surgery on $L_i$. \edit{Here $P_i'$ is the induced Legendrian of $P_i$ on contact manifold obtained by $(-1)$-surgery on $L_i^{-1}$ }}
    \label{fig:thm4_1}
\end{figure} 
Let $P_i$ be the Legendrian push-off of $L_i^{-1}$ in $S^3$, and denote by $P'_i$ the induced Legendrian knot in $Y_i$. See Figure \ref{fig:thm4_1}.
Apply Theorem \ref{Naturality of LOSS} to the pair $(P'_i,P''_i)$ in $(Y_i, \xi_{-2}(L_i))$, where $P''_i$ is induced by another Legendrian push-off of $L_i^{-1}$ in $(Y_i,\xi_{-2}(L_i))$. 
As contact $(-1)$-surgery on $L_i^{-1}$ and $(+1)$-surgery on $P_i''$ cancel,  we recover $(S^3, P_i)$ and
 obtain maps 
\begin{equation} \label{same smooth map for L_i}
    F_{-W_i,\mathfrak{s_i}}: \HFKm (-Y_i,P_i') \rightarrow \HFKm(-S^3,P_i) 
\end{equation}  with the property that $ F_{-W_i,\mathfrak{s}_i} (\mathfrak{L}(P_i'))=\mathfrak{L}(P_i)$, for $i=1,2$ respectively, where $W_i$ is the two-handle cobordism and $\spincs_i$ is given by Theorem \ref{Naturality of LOSS} \eqref{it: naturality2}.

\edit{Note that $L_1$, $L_2$, $P_1$, and $P_2$ are all  smoothly isotopic. Fixing an isotopy $\phi_{L_1 \to P_2}$ from $L_1$ to $P_2$, this induces an isomorphism $(\phi_{L_1 \to P_2})_*: \HFKm(-S^3,L_1) \to \HFKm(-S^3,P_2)$ by  \cite[Theorem 1.8]{JTZnmh}.  
Denote $\HFKm(-S^3,P_2)$ by  $\HFKm(-S^3,P)$. We identify $\HFKm(-S^3,L_1)$ with $\HFKm(-S^3,P)$ via the isomorphism $(\phi_{L_1 \to P_2})_*$, and consider the image of the LOSS invariants of $L_1$ under  this map.
The same process applies to for $L_2$ and $P_1$. It follows that we can
 view the LOSS invariants of $L_1$, $L_2$, $P_1$ and $P_2$ all as elements of $\HFKm(-S^3,P)$.} 

\edit{Similarly  we can identify $\HFKm(-Y,P_1')$ with $\HFKm(-Y,P_2')$, and again call the latter one $\HFKm(-Y,P')$. We can now also view  the LOSS invariants of $P_1'$ and $P_2'$ both as elements of $\HFKm(-Y,P')$.
Moreover, since  $P_1$ and $P_2$ are smoothly isotopic with the same rotation number, the induced cobordisms $W_1$ and $W_2$ are diffeomorphic and the diffeomorphism sends $\spincs_1$ to $\spincs_2$. Thus by Theorem 8.9 and Corollary 11.17 in \cite{Jcsf}, the naturality of canonical isomorphisms under cobordism maps allows us to define a single map }

\begin{equation} 
   F_{-W,\mathfrak{s}}: \HFKm (-{S^3_{-1}}(E_n),P') \rightarrow \HFKm(-S^3,P)
\end{equation}

with the property that $ F_{-W,\mathfrak{s}} (\mathfrak{L}(P_i'))=\mathfrak{L}(P_i),$ where  $(-W_i,\mathfrak{s}_i)= (-W,\mathfrak{s})$ for $ i=1,2.$  \edit{Since it was shown in \cite{OzsvathStipsiczContactsurgeryandtransverseinvariant}   that $\mathfrak{L}(L_1)$ and $\mathfrak{L}(L_2)$ are different when module mapping class group action,} we have  $\mathfrak{L}(L_1)\neq \mathfrak{L}(L_2)$ in $\HFKm(-S^3,P)$.  By Theorem \ref{LOSS invariant under negative stabilization}, LOSS invariant is unchanged under negative stabilization, therefore $\mathfrak{L}(P_1)\neq \mathfrak{L}(P_2)$ in $\HFKm(-S^3,P)$. It follows that $\mathfrak{L}(P_1')\neq \mathfrak{L}(P_2')$.

Now consider the map
\begin{equation}
    G: \HFKm (-{S^3_{-1}}(E_n),P') \rightarrow \HFa (-{S^3_{-1}}(E_n)).
\end{equation}
According to \cite[Proposition 2]{DGh} the Legendrian push-off $P'_i$ in the Legendrian surgery is smoothly isotopic to the dual knot of $L_i=E_n$
, and $-{S^3_{-1}}(E_n) \cong S^3_{+1}(-E_n).$ Thus the above $G$ map is the same as 
\begin{equation}
     G: \HFKm ({S^3_{+1}}(-E_n),\mu) \rightarrow \HFa ({S^3_{+1}}(-E_n)), 
\end{equation}
where $\mu$ is the meridian of the surgery knot $-E_n$. Using \cite[Lemma 6.6]{LOSS} it is easy to calculate that $\tb(P'_i)=0$ and $\rot(P'_i)=-1$, thus by \cite[Theorem 1.6]{OzsvathStipsiczContactsurgeryandtransverseinvariant} the Alexander grading of $\mathfrak{L}(P'_i)$ equals $1$ for all $i=1,2$. By Corollary \ref{cor: topinj}, $G$ is injective in the top Alexander grading $1$. Therefore by Lemma \ref{G maps LOSS to Contact invariant}, we conclude that $c(\xi_{-2}(L_1))\neq c(\xi_{-2}(L_2))$. 
\end{proof} 
\begin{remark}
    As discussed in Section \ref{subsubsec:stronginv}, the last step of the argument uses the naturality of Heegaard Floer homology \cite{JTZnmh}.
\end{remark}

\subsection{Case for $\pmb{r=-2-k}$, $\pmb{k\in \Z_{> 0}}$}
In this subsection we prove the following.

\begin{theorem} \label{-2-k contact surgery}
    Let $L_1$ and $L_2$ be two Legendrian representatives of $E_n$ with $n>3$ odd, that have same $tb=1$ and $\rot=0$ but different LOSS invariants. Then contact $(-2-k)$-surgery for $k\in \Z_{>0}$ on $L_1$ and $L_2$ gives contact manifolds that have different contact invariants.
\end{theorem}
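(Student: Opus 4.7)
My plan is to reduce to the $r=-2$ case of Theorem \ref{-2 contact surgery} via the DGS algorithm and naturality of the Heegaard Floer contact invariant. By Theorem \ref{DGS negative ration contact surgery}, contact $(-(k+2))$-surgery on $L_i$ is realized as contact $(-1)$-surgery on the $(k+1)$-fold negatively stabilized Legendrian $L_i^{-(k+1)}$, producing a contact structure on the integer homology sphere $Y_2 := S^3_{-k-1}(E_n)$. Writing $L_i^{-(k+1)} = (L_i^{-1})^{-k}$, I factor the resulting Stein cobordism as contact $(-2)$-surgery on $L_i$ (yielding $(Y_1, \xi_{-2}(L_i))$ with $Y_1 := S^3_{-1}(E_n)$) followed by an additional smooth $2$-handle cobordism $W : Y_1 \to Y_2$ built from further contact surgeries on Legendrian meridional push-offs of the surgered $E_n$ in the intermediate contact manifolds; these auxiliary Legendrians depend only on smooth data.

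The next step is to apply naturality of the contact invariant (along the lines of Theorem \ref{naturality of contact invariant}) to $W$. Each stage of $W$ is a contact surgery whose induced cobordism map transports contact invariants in a specific $\Spin^c$ structure; composing these yields maps
\[
F_{-W,\mathfrak{s}_i} : \HFa(-Y_2) \to \HFa(-Y_1)
\]
with $F_{-W,\mathfrak{s}_i}(c(\xi_{-2-k}(L_i))) = c(\xi_{-2}(L_i))$ for $i=1,2$. The critical point is that the auxiliary Legendrians used to build $W$ depend only on the smooth topology of the surgery and on the classical invariants of $L_i$; since $L_1$ and $L_2$ share the same $\tb$ and $\rot$, the first-Chern-class pairings that pin down $\mathfrak{s}_i$ (via Theorem \ref{thm: same Spin^c for Legendrian with classical invariants} or its analog for the composed cobordism) coincide, so $\mathfrak{s}_1 = \mathfrak{s}_2 =: \mathfrak{s}$.

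Once a common $\Spin^c$ structure $\mathfrak{s}$ is in place, the conclusion is immediate: the two contact invariants $c(\xi_{-2-k}(L_1))$ and $c(\xi_{-2-k}(L_2))$ have distinct images $c(\xi_{-2}(L_1)) \neq c(\xi_{-2}(L_2))$ (by Theorem \ref{-2 contact surgery}) under the common homomorphism $F_{-W,\mathfrak{s}}$, and must therefore themselves differ. I expect the main obstacle to be identifying $W$ as a concrete composition of contact surgeries and verifying that the $\Spin^c$-uniqueness statement of Theorem \ref{thm: same Spin^c for Legendrian with classical invariants} (currently stated for positive integer contact surgery) adapts correctly to the successive surgery steps making up $W$; the delicate bookkeeping of rotation numbers and Chern class pairings across the composed cobordism is the technical heart of the argument.
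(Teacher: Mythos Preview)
Your outline has the right general shape (reduce to the $r=-2$ case via a cobordism and naturality), but there is a genuine gap in the direction of the cobordism map, and this gap hides exactly the computation that the paper needs to carry out.

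Concretely: contact $(-2-k)$-surgery on $L_i$ is a single Legendrian surgery on $L_i^{-(k+1)}$, so its Stein $2$-handle cobordism from $S^3$ cannot be ``factored through'' $Y_1=S^3_{-1}(E_n)$ as a Stein cobordism. The way one actually passes from $(Y_1,\xi_{-2}(L_i))$ to $(Y_2,\xi_{-2-k}(L_i))$ is via a \emph{positive} contact surgery on the dual knot $P'_i$ (this is Lemma~\ref{n+1/n surgery and n negative stabilization}: contact $(\tfrac{k+1}{k})$-surgery on the Legendrian meridian replaces $L_i^{-1}$ by $L_i^{-(k+1)}$). The naturality statement you cite, Theorem~\ref{naturality of contact invariant}, therefore produces a map
\[
F_{-W,\mathfrak{s}}\colon \HFa(-Y_1 \# L(k,-1)) \longrightarrow \HFa(-Y_2),
\qquad c(\xi_{-2}(L_i))\otimes\tilde c \longmapsto c(\xi_{-2-k}(L_i)),
\]
which goes in the \emph{opposite} direction from the one you wrote. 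With the map going this way, knowing $\mathfrak{s}_1=\mathfrak{s}_2$ is not enough: one must prove that $F_{-W,\mathfrak{s}}$ is \emph{injective} to deduce $c(\xi_{-2-k}(L_1))\neq c(\xi_{-2-k}(L_2))$ from $c(\xi_{-2}(L_1))\neq c(\xi_{-2}(L_2))$.

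This injectivity is precisely the technical heart of the paper's argument and is not automatic. The paper establishes it via Proposition~\ref{mapping cone for contact invariant}: identifying $F_{-W,\mathfrak{s}}$ with the inclusion $(t,\hat B_s)\hookrightarrow \Xa_{-(k+1)/k}(-Y_1,P')$ (here $t=-1$, using $\tb(P')=0$, $\rot(P')=-1$), and then observing that because the dual knot $P'$ has genus one, the relevant mapping cone truncates so that this inclusion is in fact an isomorphism on homology. Your proposal, by running the map backwards, sidesteps this computation entirely, but at the cost of assuming a Stein factorization through $Y_1$ that does not exist. The ``delicate bookkeeping'' you anticipate is not the real obstacle; the missing ingredient is the mapping-cone injectivity argument.
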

The following well-known result (for example see \cite[Figure 9]{CasalsEtnyreKegelSteintraces}) will be essential in the later proofs.     \edit{For completeness}, we provide a proof for our setting.
\begin{lemma} \label{n+1/n surgery and n negative 
stabilization}
    The two Legendrian arcs $e_1$ and $e_2$ depicted in Figure \ref{New L} in the corresponding local contact surgery diagram are Legendrian isotopic. 
\end{lemma}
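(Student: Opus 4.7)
The plan is to prove this by constructing an explicit Legendrian isotopy between $e_1$ and $e_2$ in the front projection of the ambient contact manifold obtained from the local surgery picture. Since the statement is of the form ``$(n+1)/n$-surgery vs.\ $n$ negative stabilizations,'' the two sides of Figure \ref{New L} are related by a Legendrian Kirby-calculus move, and my job is simply to verify this move explicitly rather than invent new machinery.

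First, I would unpack the local picture. On one side, $e_1$ is a Legendrian arc passing next to a configuration realizing the DGS decomposition of $(n+1)/n$-contact surgery: a Legendrian component $L$ with contact coefficient $+1$, together with an $n$-times negatively stabilized push-off $L'$ with contact coefficient $-1$. On the other side, $e_2$ is the same arc but now the $(+1)$-surgery on $L$ has been cancelled against the $(-1)$-surgery on $L'$ via a Legendrian handle slide, so that the cancellation leaves only the effect of the stabilizations on the arc. Identifying which crossings and cusps in Figure \ref{New L} correspond to the surgery components versus to the ambient arc is the crucial first step.

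Second, I would perform the isotopy by the following sequence of moves. Slide the strand of $e_1$ across the Legendrian push-off of $L$ using a Legendrian Reidemeister II move; this is permitted because the $(+1)$-contact framing on $L$ makes $L$ and its push-off an annulus whose Legendrian handle cancels with $L'$. The cancellation converts the $(-1)$-surgery on $L'$ into a sequence of $n$ negative zig-zag stabilizations attached to $e_1$, which by construction is exactly $e_2$. This is precisely the Legendrian-Kirby move documented in \cite[Fig.\ 9]{CasalsEtnyreKegelSteintraces}, adapted to our specific stabilization choice.

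The main obstacle, and the reason a proof is needed rather than just a citation, is the careful bookkeeping of stabilization signs. The DGS algorithm allows either positive or negative stabilizations at each step, but here we have committed to \emph{all-negative} stabilizations (as declared in Section \ref{general contact surgery}); I must check that each cusp introduced on $e_1$ during the Reidemeister II / handle-slide sequence is indeed a down-cusp (negative) rather than an up-cusp. This amounts to an orientation check at each of the $n$ zig-zags and will be the bulk of the actual write-up.
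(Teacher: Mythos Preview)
Your overall strategy---apply DGS to the $(n+1)/n$-surgery and then simplify via Legendrian Kirby moves---matches the paper's, but the specific sequence of moves you describe does not work as written.

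After DGS you have a $(+1)$-surgery on the meridian $L$ and a $(-1)$-surgery on an $n$-times negatively stabilized push-off $L'$. You propose to ``slide the strand of $e_1$ across the Legendrian push-off of $L$ using a Legendrian Reidemeister II move'' and then assert that cancelling the two surgeries ``converts the $(-1)$-surgery on $L'$ into a sequence of $n$ negative zig-zag stabilizations attached to $e_1$.'' Neither step is correct. A Reidemeister~II move is just an ambient isotopy of the front; it is not a handle slide and does not interact with surgery data. More seriously, a $(+1)$-surgery on $L$ does \emph{not} cancel against a $(-1)$-surgery on a \emph{stabilized} push-off $L'$---contact $(\pm 1)$-surgeries cancel only when performed on genuine Legendrian push-offs---so no direct cancellation is available, and in any case cancelling a pair of surgeries never deposits zigzags on a bystander arc.

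What actually produces the zigzags is a Legendrian \emph{handle slide} of $e_1$ over the $(-1)$-framed component $L'$ (as in \cite[Proposition~1]{DGh}); the $n$ zigzags of $L'$ are transferred to $e_1$ in that slide. Only after this slide does one trade the contact $(+1)$-surgery on $L$ for a Stein $1$-handle and cancel the resulting $1$-handle/$2$-handle pair, followed by Legendrian Reidemeister moves; this is the sequence in the paper's Figure~\ref{New Lproof}. Your proposed sign bookkeeping is a legitimate concern, but it belongs to the handle-slide step, not to any cancellation.
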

\begin{figure}[htb!]
\centering
\begin{tikzpicture}
\begin{scope}[thin, black!0!white]
          \draw  (-5, 0) -- (5, 0);
      \end{scope}
    \node at (-3,0){\includegraphics[scale=0.6]{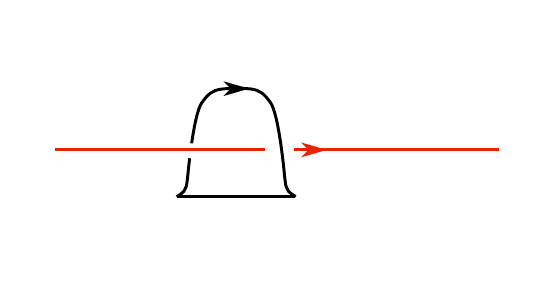}};
     \draw [decorate,decoration={brace,amplitude=4pt},xshift=0.5cm,yshift=0pt]
      (2.8,0.5) -- (2.8,-0.7) node [midway,right,xshift=0.1cm] {\small $n$};
      \node at (-3.5,1.1){\small $+\frac{n+1}{n}$};
        \node at (0,0){ $\sim$};
         \node at (-1.5,.5)[text=red]{ $e_1$};
          \node at (1.8,0.3)[text=red]{ $e_2$};
       \node at (3,0){\includegraphics[scale=0.7]{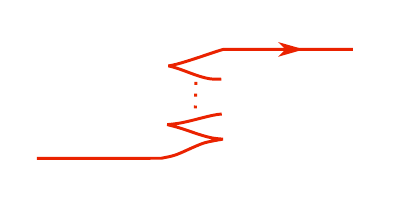}};
\end{tikzpicture}
    \caption{The number $n$ in the second diagram indicates that there are $n$ zigzags (i.e. $n$ \edit{negative stabilizations}). We call the surgery Legendrian in the first diagram the standard Legendrian meridian of $e_1$ (or more precisely, the knot which $e_1$ belongs to).}
    \label{New L}
\end{figure} 
\begin{figure}[htb!]
\centering
\begin{tikzpicture}
\begin{scope}[thin, black!0!white]
          \draw  (-5, -1) -- (5, 5);
      \end{scope}
    \node at (-4,4){\includegraphics[scale=0.6]{Picture/fig1a}};
      \node at (-3.5,4.6){\small $\frac{n+1}{n}$};
      \node at (5,4.6){\small $-1$};
           \node at (4.7,3.2){\small $+1$};
       \node at (4,4){\includegraphics[scale=0.6]{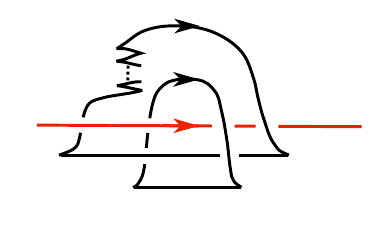}};
           \node at (-4,2.8){\textbf{a}};
         \node at (4,2.7){\textbf{b}};
              \node at (-2.8,1.2){\small $-1$};
               \node at (-3,-0.6){\small $+1$};
                     \node at (-4.87,0.64){\small $-n$};
        \node at (-4,0.5){\includegraphics[scale=0.6]{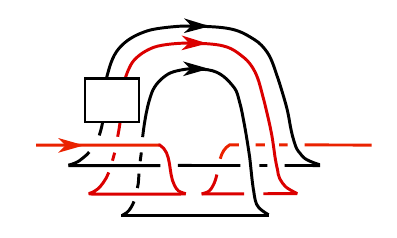}};
         \draw [decorate,decoration={brace,amplitude=4pt},xshift=0.5cm,yshift=0pt]
      (2.6,4.2) -- (2.6,4.8) node [midway,left,xshift=-0.05cm,yshift=0.1cm] {\small $n$};
           \node at (-4,-1.2){\textbf{c}};
         \node at (4,-1.2){\textbf{d}};
           \node at (4,0.5){\includegraphics[scale=0.7]{Picture/fig1b}};
          \draw [decorate,decoration={brace,amplitude=4pt},xshift=0.5cm,yshift=0pt]
      (3.1,0) -- (3.1,1) node [midway,left,xshift=-0.1cm] {\small $n$};
\end{tikzpicture}
    \caption{
      From \textbf{a} to \textbf{b}  we apply the DGS algorithm \cite{DGS} to   the contact $(\frac{n+1}{n})$-surgery, obtaining a sequence of contact $(+1)$- and $(-1)$-surgeries. Note that  all stabilizations are negative according to \edit{our convention}; from \textbf{b} to \textbf{c} we slide the red curve over the $(-1)$-framed $2$-handle using \cite[Proposition 1]{DGh}; from \textbf{c} to \textbf{d} \edit{we first replace the contact $(+1)$-framed $2$-handle by a Stein $1$-handle, cancel it with the $(-1)$-framed Stein $2$-handle, then apply Legendrian Reidemeister moves to the red curve.}}
     \label{New Lproof}
\end{figure} 
\begin{proof}
   The equivalence follows from \edit{the sequence of contact surgeries} and Stein handle moves shown  in Figure 
    \ref{New Lproof}. 
\end{proof}

Observe that using the above lemma we can infer that if $L$ is a Legendrian knot in some $(Y,\xi)$, then contact $(\frac{n+1}{n})$-surgery on the standard Legendrian meridian of $L$ send $L$ to $L^{-n}$. Therefore we can view Legendrian surgery on $L^{-n}$ as Legendrian surgery on $L$ followed by $(\frac{n+1}{n})$-surgery on the standard Legendrian meridian of $L^{-n}$. Now we are \edit{ready to prove Theorem \ref{-2-k contact surgery}.}

\begin{proof} [Proof of Theorem \ref{-2-k contact surgery}]
\begin{figure}[htb!]
\centering
\begin{tikzpicture}
\begin{scope}[thin, black!0!white]
          \draw  (-5, 0) -- (5, 0);
      \end{scope}
    \node at (-4,0){\includegraphics[scale=0.5]{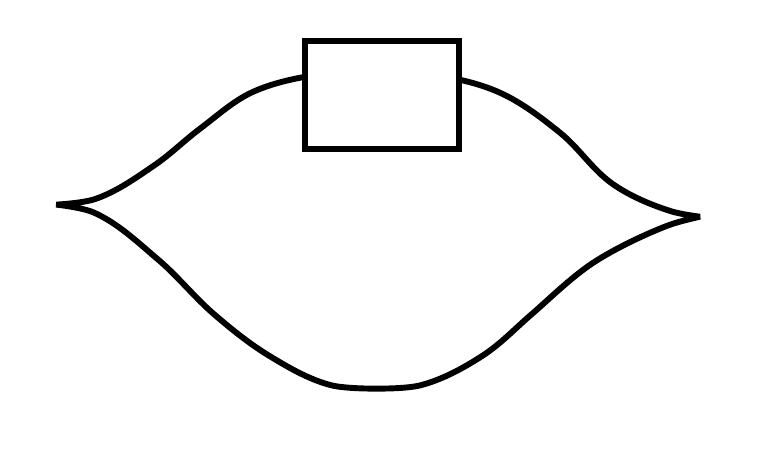}};
     \node at (-2,1){\small $-1$};
    \node at (6,1){\small $-1$};
      \node at (-3.97,1.1){ $L_i^{-1-k}$};
          \node at (4.05,1.1){ $L_i^{-1}$};
    \node at (0,0){\Large $\sim$};
     
          \node at (6.4,-0.5)[text=blue]{\small $\frac{k+1}{k}$};
       \node at (4,0){\includegraphics[scale=0.5]{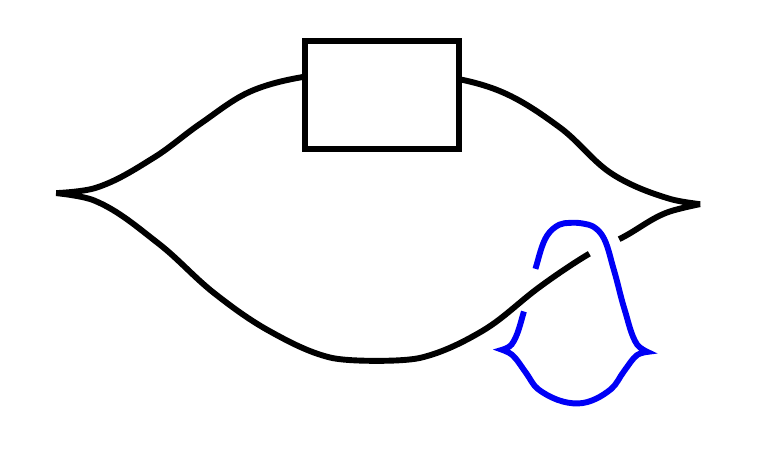}};
\end{tikzpicture}
    \caption{By lemma \ref{n+1/n surgery and n negative 
stabilization}, this two contact surgery diagram are equivalent.}
    \label{fig:thm4_2}
\end{figure} 
To make the argument clear, we use the same notation as in the proof of Theorem \ref{-2 contact surgery}. We again let $P_i$ be the Legendrian push off of $L_i^{-1}$, $(Y_i,\xi_{-1}(L_i^{-1}))$ be the contact manifold obtained by Legendrian surgery on $L_i^{-1}$, $P_i'$ be \edit{the dual knot of $P_i$} in $(Y_i,\xi_{-1}(L_i^{-1}))$, and $(Y_i',\xi_{-2-k}(L_i))$ be the contact manifold obtained by  contact $(-2-k)$-surgery on $L_i$.

We first view contact $(-2-k)$-surgery on $L_i$ as Legendrian surgery on $L_i^{-k-1}$. Then by the observation above, Legendrian surgery on $L_i^{-k-1}$ is the same as Legendrian surgery on $L_i^{-1}$ followed by contact $(\frac{k+1}{k})$-surgery on the standard Legendrian meridian of $L_i^{-1}$. In other words we can view the contact manifold $(Y_i',\xi_{-2-k}(L_i))$ \edit{as obtained} by doing contact $(\frac{k+1}{k})$-surgery on the standard Legendrian meridian of $P_i$ in contact manifold $(Y_i,\xi_{-1}(L_i^{-1}))$. \edit{Moreover, again by} \cite[Proposition 2]{DGh}, this standard Legendrian meridian is Legendrian isotopic to $P_i'$ in $(Y_i,\xi_{-1}(L_i^{-1}))$. Recall that $\tb(P_i')=0$ and $\rot(P_i')=-1$, implies that smoothly we are also doing $(\frac{k+1}{k})$-surgery on $P_i'$. We also notice and denote $Y=Y_i=\edit{{S^3_{-1}}}(L_i)$  is a homology sphere, and $Y_i'=Y_{\frac{k+1}{k}}(P_i')$ is a rational homology sphere.

Thus by the \edit{Naturality Theorem} \ref{naturality of contact invariant} we obtain maps 
$$F_{-W_i,\mathfrak{s}_i}: \HFa(-Y \# L(k,-1)) \rightarrow \HFa(-(Y_{\frac{k+1}{k}}(P_i')))$$ with the \edit{property}  $$F_{-W_i,\mathfrak{s_i}}(c(\xi_{-2}(L_i))\otimes \tilde{c})= c(\xi_{-2-k}(L_i)).$$

Since the \edit{first Chern classes} are all torsion, we also have  $$\pm \langle c_1(\mathfrak{s}_i),[\Tilde{Z}] \rangle =p+(\rot(\edit{L_i})-\tb(\edit{L_i}))q-1=k+1+(-1-0)k-1=0,$$ which implies $\mathfrak{s}_1=\mathfrak{s}_2$. \edit{By the same argument as in the proof of Theorem \ref{-2 contact surgery} the  naturality under cobordism maps by \cite{Jcsf} and \cite{JTZnmh} implies that $F_{-W_i,\mathfrak{s}_i}, i=1,2$ above are again given by a single map} 

$$F_{-W,\mathfrak{s}}: \HFa(-Y \# L(k,-1)) \rightarrow \HFa(-(Y_{\frac{k+1}{k}}(P')))$$ with the property that for each $i=1,2$ $$F_{-W,\mathfrak{s}}(c(\xi_{-2}(L_i))\otimes \tilde{c})= c(\xi_{-2-k}(L_i)),$$ where $W=W_i$, $\mathfrak{s}=\mathfrak{s}_i$ and $P'$ is \edit{the underlying smooth knot} of $P_i'$. 

The goal now is to show this map $F_{-W,\mathfrak{s}}$ is injective. By Proposition \ref{mapping cone for contact invariant}, we only need to show that \begin{equation} \label{eq: inclusion of contact invariant into the mapping cone}
    (t,\hat{B}_s) \hookrightarrow \Xa_{-(k+1)/k}(-Y,P')
\end{equation}
is injective in homology, where $2t=(-1-0+1)k-2$, so $t=-1$.  (Proposition \ref{mapping cone for contact invariant} only determines   $\mathfrak{s}$ up  to conjugation. However here we have $\langle c_1(\mathfrak{s}),[\Tilde{Z}] \rangle =0$, 
so $\mathfrak{s}$ is self-conjugate.) 

The mapping cone corresponding to \edit{the} $\Spin^c$ structure \edit{$\mathfrak{s}_{-1}$} is given by 
\begin{equation} \label{eq: mappingcone}
\xymatrix@C=0.6in@R=0.6in{
& (-1,\hat{A}_{-1})\ar[dl]_{\hat{h}_{-1}}\ar[d]_{\hat{v}_{-1}} & (k,\hat{A}_{1})\ar[dl]_{\hat{h}_1} \ar[d]_{\hat{v}_1}\\
\cdots & (-1,\hat{B}_{-1}) & \cdots
}
\end{equation}
The genus of $P'$ is $1$ as it has the same knot complement as $E_n,$ so the map $\hat{v}_s$  (resp.~$\hat{h}_s$)  is an isomorphism for all $s>0$ (resp.~$s<0$).
By a standard truncation argument we see that
 $$(-1,\hat{B}_{-1}) \hookrightarrow (\Xa_{-(k+1)/k}(-Y, P'), -1)$$  in fact induces an isomorphism in homology. This gives us the desired conclusion $c(\xi_{-2-k}(L_1))\neq c(\xi_{-2-k}(L_2))$. \end{proof}

\subsection{Case for general rational $\pmb{r<0}$}

To obtain the general result we need to use Theorem \ref{DGS negative ration contact surgery}, thus for $r<0$ we write \begin{equation}
    r=- {\frac{x}{y}}=[a_1 + 1, a_2,\dots, a_\ell]=a_1+1-\cfrac{1}{a_2-\cfrac{1}{\dots-\cfrac{1}{a_\ell}}}
\end{equation}
where each $a_j\leq -2$. Aside from the DGS algorithm we also need the following proposition, which essentially states that the contact $(-1)$-surgery (Legendrian surgery) \edit{preserves} the distinction between contact invariants.
\begin{proposition} [\cite{Wantightcontactstrucutres} Theorem 1.1]
    {\label{Legendrian surgery preserve uniquness}}
Let $\xi^1$ and $\xi^2$ be two contact structures on a 3-manifold $Y$. Take any smooth knot $K$ in Y. Let $L_i$, $i=1, 2$ be a Legendrian representative of $K$ in $\xi^i$ such that \edit{Legendrian surgery on $L_i$ smoothly induce the same cobordism.}  Then 
    \begin{equation}
        c(\xi^1)\neq c(\xi^2) \text{ implies } c(\xi^1_{-1}(L_1))\neq c(\xi^2_{-1}(L_2)).
    \end{equation} 
\end{proposition}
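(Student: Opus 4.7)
The plan is to establish the contrapositive: assume $c(\xi^1_{-1}(L_1)) = c(\xi^2_{-1}(L_2))$ in $\HFa(-Y')$, where $Y'$ denotes the common smooth result of Legendrian surgery on $L_1$ and $L_2$, and derive $c(\xi^1) = c(\xi^2)$ in $\HFa(-Y)$, contradicting the hypothesis. The central idea is to \emph{undo} the Legendrian surgery by a cancelling contact $(+1)$-surgery on a Legendrian meridian, and to exploit the naturality of the contact invariant under this $(+1)$-surgery.

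First, I would set up the cancellation. In each contact manifold $(Y', \xi^i_{-1}(L_i))$, place the standard Legendrian meridian $\mu_i$ of $L_i$, for which $\tb(\mu_i) = -1$ and $\rot(\mu_i) = 0$. The standard cancellation of $(\pm 1)$-contact surgeries then implies that contact $(+1)$-surgery on $\mu_i$ recovers $(Y, \xi^i)$. The associated smooth cobordism $W_i \colon Y' \to Y$ is obtained by attaching a $0$-framed $2$-handle along the smooth meridian of $K \subset Y'$; in particular, $W_1$ and $W_2$ coincide smoothly, since by hypothesis the Legendrian surgery cobordisms $Y \to Y'$ agree and the relevant meridian depends only on this smooth cobordism.

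Next I would apply Theorem \ref{thm: same Spin^c for Legendrian with classical invariants} to the contact $(+1)$-surgery on $\mu_i$. This produces, for each $i$, a $\Spin^c$ structure $\mathfrak{s}_i$ on $W_i$ such that
\[
F_{-W_i, \mathfrak{s}_i}\bigl(c(\xi^i_{-1}(L_i))\bigr) = c(\xi^i),
\]
characterized by $\langle c_1(\mathfrak{s}_i), [\Tilde{Z}_i] \rangle = \rot(\mu_i) + 1 - 1 = 0$. Since $\rot(\mu_1) = \rot(\mu_2)$ and the cobordisms coincide smoothly, the characterization forces $\mathfrak{s}_1 = \mathfrak{s}_2$, so the two induced maps are in fact a single map $F \colon \HFa(-Y') \to \HFa(-Y)$. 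Applying $F$ to the assumed equality $c(\xi^1_{-1}(L_1)) = c(\xi^2_{-1}(L_2))$ then yields $c(\xi^1) = c(\xi^2)$, completing the proof of the contrapositive.

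The main difficulty I anticipate is ensuring that a naturality statement in the spirit of Theorem \ref{thm: same Spin^c for Legendrian with classical invariants} applies in the generality claimed in the proposition: the cited version requires nonvanishing contact invariants and rational homology sphere hypotheses on both ends, which may fail in full generality. One would therefore likely need a sharper naturality statement tailored to contact $(+1)$-surgery on a null-homologous Legendrian meridian (where the underlying smooth surgery is trivial, reinforcing tractability), or to verify that the argument is only used in situations where these hypotheses hold. A secondary subtlety is the conjugation ambiguity in the $\Spin^c$ structure produced by the naturality theorem; this is however resolved here because $\rot(\mu_i) = 0$ forces $c_1(\mathfrak{s}_i) = 0$, so $\mathfrak{s}_i$ is self-conjugate and the map $F$ is unambiguously defined.
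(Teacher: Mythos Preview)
Your approach is sound in its overall logic but takes a more circuitous route than the paper's. The paper does not give a proof (the result is cited from \cite{Wantightcontactstrucutres}), but the accompanying remark points to the intended argument: use directly the classical fact that the cobordism map associated to a Stein cobordism---equivalently, to Legendrian $(-1)$-surgery---carries the contact invariant of the new contact manifold to that of the old one (\cite[Theorem~4.2]{OSHFandcontactstructures}, \cite[Lemma~2.11]{GhigginiOSinvariantsandfillabilityofcontactstructures}). Since by hypothesis the two Legendrian surgeries induce the same smooth cobordism $W\colon Y\to Y'$, the single map $F_{-W}\colon \HFa(-Y')\to \HFa(-Y)$ sends $c(\xi^i_{-1}(L_i))$ to $c(\xi^i)$ for both $i$, and the contrapositive follows at once.

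Your argument instead undoes the surgery via contact $(+1)$-surgery on a meridian and invokes Theorem~\ref{thm: same Spin^c for Legendrian with classical invariants}. This works in restricted settings but imports extra hypotheses you rightly flag: rational homology sphere conditions on $Y$ and $Y'$, nonvanishing of $c(\xi^i_{-1}(L_i))$, and (implicitly) that the meridian be rationally null-homologous in $Y'$---the last can fail when $K$ itself is not null-homologous, a case the proposition and its remark explicitly allow. The Stein-cobordism argument needs none of these and has no $\Spin^c$ ambiguity to resolve, so it both simplifies the proof and achieves the full stated generality.
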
 
\edit{\begin{remark}
   The above Proposition is essentially \cite[Theorem 4.2]{OSHFandcontactstructures}, \cite[Lemma 2.11]{GhigginiOSinvariantsandfillabilityofcontactstructures} plus naturality. Also note that we do not require $K$ to be null-homologous as the naturality of contact invariants under the Legendrian surgeries or Stein cobordisms do not require $K$ to be null-homologous in \cite[Theorem 4.2]{OSHFandcontactstructures} or \cite[Lemma 2.11]{GhigginiOSinvariantsandfillabilityofcontactstructures}.
\end{remark}}
\begin{lemma}
   Let $r=[a_1, a_2,\dots, a_\l] <0$ where each $a_j \leq -2$ but not all are equal to $-2$. Let $L_1$ and $L_2$ be two Legendrian representatives of $E_n$ with $n>3$ odd, that have same $tb=1$ and $\rot=0$ but different LOSS invariants. Then  contact $r$-surgery on $L_1$ and $L_2$ gives contact manifolds that have different contact invariants.   
\end{lemma}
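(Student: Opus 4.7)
The plan is to use the DGS algorithm (Theorem \ref{DGS negative ration contact surgery}) to express contact $r$-surgery on $L_i$ as simultaneous contact $(-1)$-surgeries on a Legendrian link $K^{(i)}_1 \cup \cdots \cup K^{(i)}_\ell$, where $K^{(i)}_j$ is the DGS link component corresponding to the $j$-th entry of the continued fraction. The idea is then to reorder the surgeries so that a single distinguishing surgery is performed first, and to propagate the resulting distinction of contact invariants through the remaining Legendrian surgeries via Proposition \ref{Legendrian surgery preserve uniquness}. Since not all $a_j$ equal $-2$, we may pick an index $j^*$ with $a_{j^*} \leq -3$.

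Inspecting the inductive construction in Theorem \ref{DGS negative ration contact surgery}, for each $j < j^*$ the knot $K^{(i)}_j$ is an iterated Legendrian push-off of $L_i$ with no stabilizations (since $a_j = -2$ forces $|a_j + 2| = 0$), and so is Legendrian isotopic to $L_i$ in $(S^3, \xi_{std})$. Therefore $K^{(i)}_{j^*}$, being a Legendrian push-off of $K^{(i)}_{j^*-1}$ with $|a_{j^*}+2| \geq 1$ negative stabilizations, is Legendrian isotopic to $L_i^{-|a_{j^*}+2|}$. I would first perform the contact $(-1)$-surgery on the component $K^{(i)}_{j^*}$; because contact surgeries on disjoint components of a Legendrian link can be carried out in any order, this is admissible. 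By the identification above, this single surgery is precisely contact $(a_{j^*}+1)$-surgery on $L_i$. Since $a_{j^*}+1 \leq -2$, Theorem \ref{-2 contact surgery} (when $a_{j^*}+1 = -2$) or Theorem \ref{-2-k contact surgery} (when $a_{j^*}+1 \leq -3$) shows that the corresponding contact invariants are already distinct for $i = 1, 2$ after this single surgery.

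The remaining $\ell - 1$ contact $(-1)$-surgeries in the DGS decomposition are Legendrian surgeries on disjoint Legendrian knots in the intermediate contact manifold. Because the $K^{(i)}_j$ for $j \neq j^*$ sit inside the fixed neighborhood of the DGS construction, they are smoothly independent of $i$ and therefore induce identical smooth cobordisms, so Proposition \ref{Legendrian surgery preserve uniquness} applies at each step. Applying it once for each such remaining surgery preserves the distinction of contact invariants throughout, yielding $c(\xi_r(L_1)) \neq c(\xi_r(L_2))$. The main point to verify is that reordering the DGS surgeries and performing the contact $(-1)$-surgery on $K^{(i)}_{j^*}$ first is legitimate; this follows because all link components are pairwise disjoint (they sit in a tubular neighborhood of $L_i$ as iterated push-offs) and the Legendrian isotopy identifying $K^{(i)}_{j^*}$ with $L_i^{-|a_{j^*}+2|}$ takes place in $(S^3, \xi_{std})$ and so is unaffected by the subsequent surgeries on disjoint components.
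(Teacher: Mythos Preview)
Your argument is correct and follows essentially the same route as the paper: single out one DGS component carrying at least one negative stabilization, apply Theorem \ref{-2 contact surgery} or Theorem \ref{-2-k contact surgery} to that component first, and then propagate the distinction through the remaining Legendrian $(-1)$-surgeries via Proposition \ref{Legendrian surgery preserve uniquness}. The only small point to tighten is that your parenthetical ``since $a_j=-2$ forces $|a_j+2|=0$'' for $j<j^*$ presupposes that $j^*$ is the \emph{smallest} index with $a_{j^*}\le -3$; you should say this explicitly when you choose $j^*$, otherwise the claim that $K^{(i)}_{j^*}$ is Legendrian isotopic to $L_i^{-|a_{j^*}+2|}$ need not hold.
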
 \label{r not -1/k}
\begin{proof}
    The previous two subsections already prove the case when $\l=1$ so we assume $\l>1$. Suppose $a_t < -2 $ for some $t\in \{1,2,\edit{\dots},\l\}$. By \edit{the} DGS algorithm for $r<0$ (Theorem \ref{DGS negative ration contact surgery}), contact $r$-surgery on $L_i$ for $i=1,2$ is the same as a sequence of contact $(-1)$-surgeries on $L_{i}^{(j)}$ for $j=1,2,\edit{\dots},\ell$, where each $L_{i}^{(j)}$ is a \edit{Legendrian} push-off of $L_i$, with $| a_j + 2 | $ \edit{negative stabilizations}. See Figure \ref{fig:dgs1}.  If we  denote  $(Y_{i}^{(t)},\xi_{-1}(L_{i}^{(t)}))$ to be the contact $3$-manifold obtained by contact $(-1)$-surgery on $L_{i}^{(t)}$, we can view contact $r$-surgery on $L_i$ as obtained by first doing contact $(-1)$-surgery on $L_{i}^{(t)}$, then \edit{as a} contact $(-1)$-surgery in $(Y_{i}^{(t)},\xi_{-1}(L_{i}^{(t)}))$. Since $a_t<-2$, $L_{i}^{(t)}$ is \edit{negatively} stabilized at least once, thus by Theorem \ref{-2-k contact surgery} we have  $c(\xi_{-1}(L_{1}^{(t)}))\neq c(\xi_{-1}(L_{2}^{(t)}))$. Then by applying Proposition \ref{Legendrian surgery preserve uniquness} repeatedly on a sequence of contact $(-1)$-surgeries we infer that $c(\xi_r(L_1))\neq c(\xi_r(L_2)).$
\end{proof}
\begin{figure}[htb!]
\centering
\begin{tikzpicture}
    \node at (0,0){\includegraphics[scale=0.5]{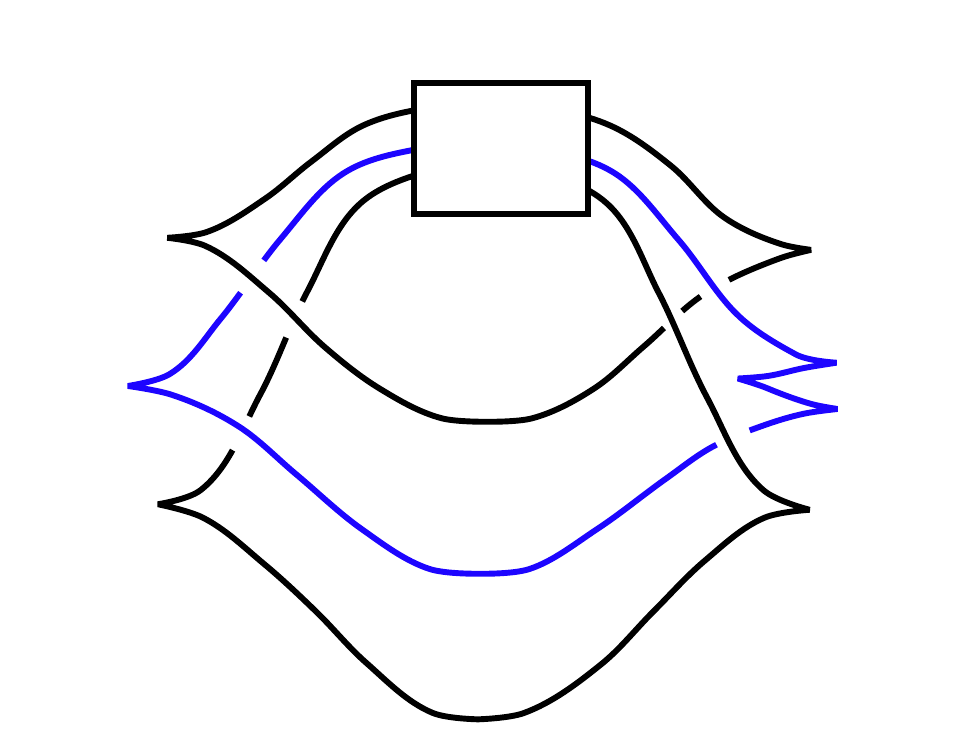}};
     \node at (0.1,1.85){\large $L_{i}$};
      \node at (-3.3,1.2){\small $-1$};
      \node at (-1.9,1.9){\small $L_{i}^{(1)}$};
       \node [text=blue] at (-3.5,0){\small $-1$};
        \node [text=blue] at (-0.7,-1){\small $L_{i}^{(t)}$};
       \node [rotate = 90] at (0.5,-0.9) {$\cdots$};
       \node at (-3.3,-1.2){\small $-1$};
        \node [rotate = 90] at (0.5,-2.2) {$\cdots$};
         \node at (-2,-2){\small $L_{i}^{(\l)}$};
          \draw [blue,decorate,decoration={brace,amplitude=4pt},xshift=0.5cm,yshift=0pt]
      (2.6,0.1) -- (2.6,-0.4) node [midway,right,xshift=0.1 cm] [text=blue] {\small $| a_t + 2 |$};
\end{tikzpicture}
    \caption{The contact $r$-surgery on $L_i$ for $i=1,2$ according to \edit{the} DGS algorithm. \edit{The} surgery coefficient is for contact surgery. \edit{For each $j\in \{1,2,\edit{\dots},\l\},$ each component $L_{i}^{(j)}$ is negatively stabilized $| a_j + 2 | $ times. In particular, $L_i^{(t)}$ denotes a component indexed by $t$ with $a_t<-2$.}}
    \label{fig:dgs1}
\end{figure} 

Now we are left with one case where $r=[a_1+1,\dots,a_\l]$ and all $a_j=-2$. This will be included in the proof of Theorem \ref{negative rational contact surgery on twist knots}. 

\begin{proof} [Proof of Theorem \ref{negative rational contact surgery on twist knots}]
First we note that $a_j=-2$ for all $j=1,2,\dots,\l$ if and only if $r=-\frac{1}{\l}$, thus
    as we discussed above, the only remaining case is when $r=-\frac{1}{\l}$. We let $r'=r-1=-\frac{1}{\l}-1$ and we denote $(Y,\xi_{r}(L_i))$ and $(Y',\xi_{r'}(L_i))$ to be the contact \edit{$3$-manifolds} obtained by taking contact $r=-\frac{1}{\l}$ and $r'=-\frac{1}{\l}-1$ surgeries on $L_i$ respectively. Then by applying Lemma \ref{n+1/n surgery and n negative 
stabilization} to the standard Legendrian meridian of $L_{i}$, we can view $(Y',\xi_{r'}(L_i))$ as obtained from contact $(+2)$-surgery on the standard Legendrian meridian of $L_{i}$ in $(Y,\xi_r(L_i))$. Moreover, when $\l>1$, both $Y$ and $Y'$ are rational homology spheres $(tb(L_i)=1)$ thus by (1) in Theorem \ref{naturality of contact invariant} \edit{(where the contact surgery coefficient is $+2$)} we obtain maps 

$$F_{-W_i,\mathfrak{s_i}}: \HFa(-Y) \rightarrow \HFa(-Y')$$ with the \edit{property} 

$$F_{-W_i,\mathfrak{s_i}}(c(\xi_r(L_i)))= c(\xi_{r'}(L_i)).$$ 
$L_i$ have the same smooth knot type, $\tb$ and $\rot$ implies their corresponding standard Legendrian meridians have the same classical invariants (even if they are just rationally null-homologous). Thus \edit{(by Theorem \ref{thm: same Spin^c for Legendrian with classical invariants})} $\spincs_1=\spincs_2$ which implies \edit{that the} two maps $F_{-W_i,\mathfrak{s_i}}$ are equivalent . Notice that $r'$ is not of the form  $-\frac{1}{k}$, thus by Theorem \ref{r not -1/k} we have $c(\xi_{r'}(L_1)) \neq c(\xi_{r'}(L_2))$ which implies $c(\xi_r(L_1)) \neq c(\xi_r(L_2))$.
\end{proof}

\begin{figure}[htb!]
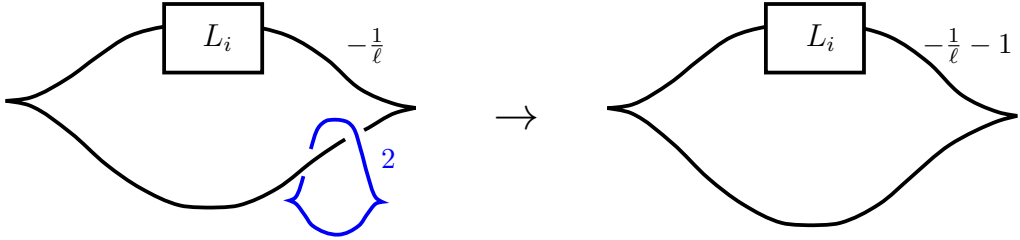

\centering
\begin{tikzpicture} 
\begin{scope}[thin, black!0!white]
          \draw  (-5, 0) -- (5, 0);
      \end{scope}
    \node at (-4,0){\includegraphics[scale=0.5]{Picture/newthm42b}};
     \node at (-2,1){\small $-\frac{1}{\l}$};
      \node at (-1.7,-0.5)[text=blue]{\small $2$};
    \node at (6,1){\small $-\frac{1}{\l}-1$};
      \node at (-3.97,1.1){ $L_i$};
          \node at (4.05,1.1){ $L_i$};
    \node at (0,0){\Large $\rightarrow$};
       \node at (4,0){\includegraphics[scale=0.5]{Picture/thm42a}};
\end{tikzpicture}
\caption{contact $(-\frac{1}{\l}-1)$-surgery on $L_i$ is obtained by contact $+2$-surgery on the standard Legendrian meridian of  $(-\frac{1}{\l})$-surgery on $L_i$ }
\end{figure} 

\begin{remark}

Although  not explicitly written in the paper \cite{MarkBulentNaturalityofContactinvariant}, it has been pointed out by Baldwin \cite{BaldwinPersonal} that for the proof of \cite[Proposition 2.3]{BaldwinCappingoff} the \edit{3-manifolds} need to satisfies certain condition to make the naturality of contact invariant (Theorem \ref{naturality of contact invariant}) work, and being a rational homology sphere is a sufficient conditions.
   
This is the reason why we are not able to obtain different contact invariants on Legendrian surgery of $L_i$, because when \edit{it is} not a rational homology sphere the condition is not necessarily hold, and in our cases the naturality result  \edit{does not apply} due to the above proof and the fact that $c(\xi_{-1}(L_1))=c(\xi_{-1}(L_2))$ \cite{LiscaStipsiczNotesoncontactinvariants}.   
\end{remark}

\section{\edit{Results} for Legendrian knot $E(m,n)$} \label{sec: Emn}

In this section we prove  Theorem \ref{negative rational contact surgery on two bridge knots}. Recall from \cite[Theorem 5.2]{WanNaturalityofLOSSinvariant} \edit{that  the} Legendrian representatives $L_1$ and $L_2$  of $E(m,n)$ with different LOSS invariants are obtained by adding $m-1$ half positive-twist to the Legendrian representatives $L_1'$ and $L_2'$ of $E_n$, where $L_1'$ and $L_2'$ have different LOSS invariants (see the top left of Figure \ref{fig:obtainEmn} \edit{when $m=3$}). 



\begin{proof} [Proof of Theorem \ref{negative rational contact surgery on two bridge knots}]
\begin{figure}[htb!]
\centering
  \begin{minipage}{\linewidth}
\centering
\subfloat[]{
\begin{tikzpicture}
\begin{scope}[thin, black!0!white]
          \draw  (-5, 0) -- (5, 0);
      \end{scope}
    \node at (-4,0){\includegraphics[scale=0.35]{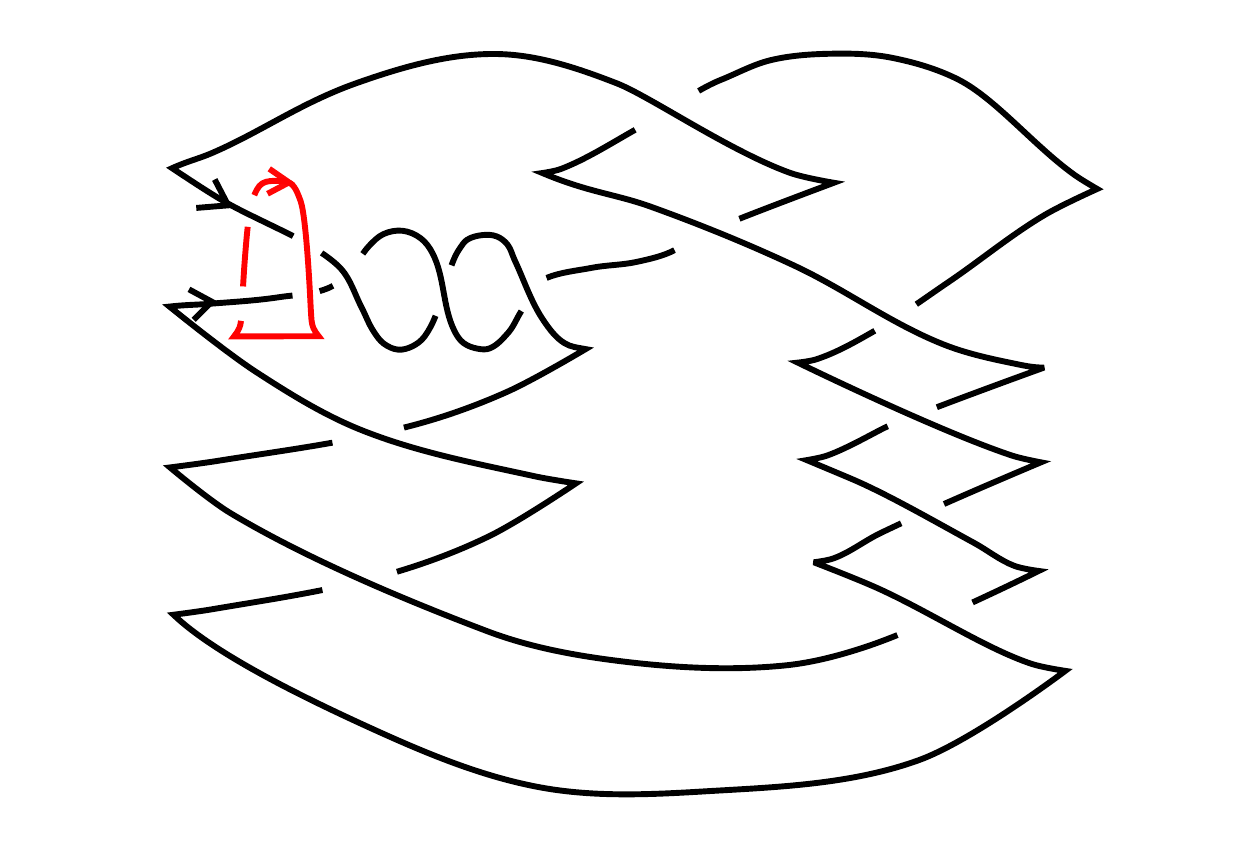}};
      \node at (-3.97,2.5){ $L_i$};
          \node at (4.05,2.5){ $L'_i$};
           \node at (-5.55,1.5) [color=red]{\small $+2$};
    \node at (0,0){\Large $\sim$};
       \node at (4,0){\includegraphics[scale=0.35]{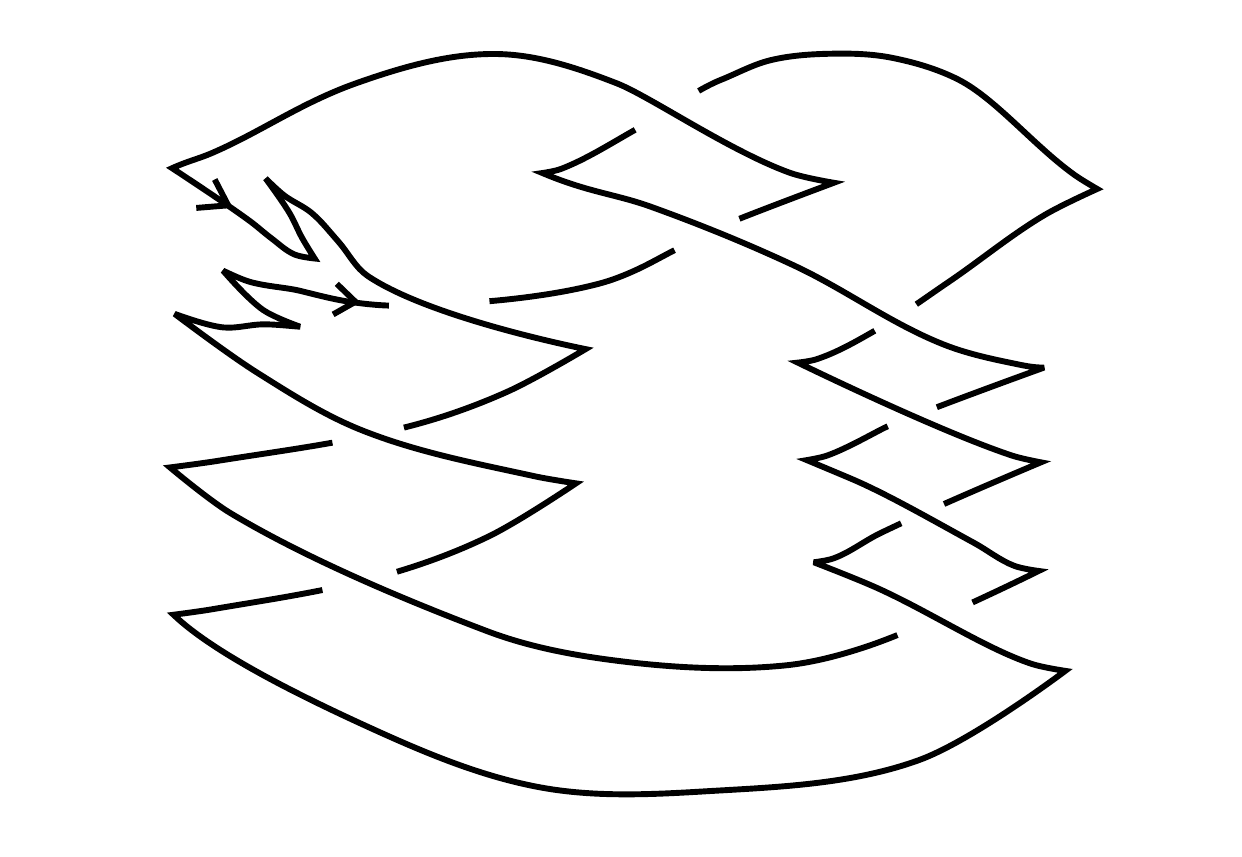}};
       \end{tikzpicture}
}
\end{minipage}\\
  \begin{minipage}{\linewidth}
\centering
\subfloat[]{
\begin{tikzpicture}
       \node at (-5,0){\includegraphics[scale=0.4]{Picture/thm42a}};
   \node at (-1,1){\includegraphics[scale=0.8]{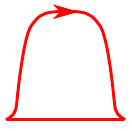}};
      \node at (-3.75,1){\small $r$};
    \node at (6.75,1){\small $r-m+1$};
    \node at (0,1)[color=red] {\small $+2$};
      \node at (-4.95,0.9){\small $L_i$};
          \node at (5,0.9){\small $L'_i$};
              \node at (1.75,0){ $\cdots \cdots$};
        \draw  [very thick, -stealth]  (-2.5, 0) -- (0.5, 0);
       \node at (5,0){\includegraphics[scale=0.4]{Picture/thm42a}};
        \draw [decorate,thick,decoration={brace,amplitude=4pt},xshift=0cm,yshift=0pt]
      (2.5,-0.5) -- (-2.5,-0.5) node [midway, yshift= -0.5 cm]  {repeat $(m-1)/2$ times};
\end{tikzpicture}
}
\end{minipage}
    \caption{Diagram for the proof of Theorem \ref{negative rational contact surgery on two bridge knots}. Top row shows the case of one Legendrian representative of $E(m,n)$ when $m=3$.  }
    \label{fig:obtainEmn}
\end{figure} 
    First fix a negative rational contact surgery coefficient $r$, and assume $m>1$ and odd. Denote $(Y,\xi_r(L_i))$ to be the contact $3$-manifold obtained by taking contact $r$-surgery on $L_i$, and $(Y',\xi_{r-m+1}(L_i'))$ to be the contact $3$-manifold obtained by taking contact $(r-m+1)$-surgery on $L_i'$ (recall that contact $(r-m+1)$-surgery on $L_i'$ is the same as contact $r$-surgery on $(m-1)$ negatively stabilized $L_i'$).

    As we described above, by undoing the full twist using contact $(+2)$-surgery on a sequence of standard Legendrian unknots, we will bring the Legendrian knot $L_i$ back to $L_i'$ with $m-1$ negative stabilization. Thus after applying contact $r$-surgery on $L_i$ we have the following maps

    $$F_{-W_i,\mathfrak{s_i}}: \HFa(-Y) \rightarrow \HFa(-Y'). $$ 

    Moreover since $\tb(L_i)=m$ for both $i=1,2$, when $r\neq -m$, both $Y$ and $Y'$ are rational homology spheres.  Therefore the naturality of contact invariant, i.e. Theorem \ref{naturality of contact invariant} applies and we get 
    $$F_{-W_i,\mathfrak{s_i}}(c(\xi_r(L_i)))= c(\xi_{r-m+1}(L_i')).$$

    \edit{Again, since both $L_i$ share the same smooth knot type, $\tb$ and $\rot$, by the same argument as in the proof of Theorem \ref{-2 contact surgery} the  naturality under cobordism maps by \cite{Jcsf} and \cite{JTZnmh} implies that $F_{-W_i,\mathfrak{s}_i}, i=1,2$ above are given by a single map} $$F_{-W,\mathfrak{s}}: \HFa(-Y) \rightarrow \HFa(-Y'). $$  Since $r-m+1\neq 1$, by Theorem \ref{negative rational contact surgery on twist knots} we have $c(\xi_{r-m+1}(L_1')) \neq c(\xi_{r-m+1}(L_2'))$. Combining the above, we infer  $c(\xi_{r}(L_1)) \neq c(\xi_{r}(L_2))$.

\end{proof}

\begin{remark}
    An alternative approach is by following the proof outline in Section \ref{sec:proof}, replacing $E_n$ by $E(m,n)$ at each instance. This method produces an analogous result to Theorem \ref{-2 contact surgery}, but fails at the second step, when extending to $(-2-k)$-surgery. The main reason is that the genus of $E(m,n)$ is greater than $1,$ and increases when $m$ increase, so the truncation of the mapping cone \eqref{eq: mappingcone} becomes more complicated. Thus we are unable to conclude the injectivity which is necessary for the arguments.
\end{remark}

\bibliographystyle{amsalpha}
\bibliography{bibliography}

\end{document}